\documentclass[11pt]{article}
\usepackage{amsmath,amssymb,amsthm,mathtools,bm}
\usepackage{microtype,booktabs,tabularx,array,enumitem,graphicx,xcolor}
\usepackage[round,authoryear]{natbib}
\usepackage{paper-identity}
\usepackage[nameinlink,capitalize,noabbrev]{cleveref}
\usepackage{needspace}
\usepackage{placeins}
\usepackage{caption,longtable}
\usepackage[most]{tcolorbox}
\definecolor{ReviewOld}{HTML}{B42318}
\definecolor{ReviewNew}{HTML}{1558B0}
\definecolor{TheoremShade}{HTML}{F8FAFC}
\definecolor{TableInk}{HTML}{233E58}
\definecolor{CitationColor}{HTML}{9B3A3A}

\tcbset{resultbox/.style={enhanced,breakable,colback=TheoremShade,
 colframe=blue!25!black,boxrule=0pt,borderline west={1.5pt}{0pt}{blue!40!black},
 sharp corners,left=8pt,right=8pt,top=5pt,bottom=5pt,before skip=9pt,after skip=9pt}}
\tcolorboxenvironment{theorem}{resultbox}
\tcolorboxenvironment{lemma}{resultbox}
\tcolorboxenvironment{proposition}{resultbox}
\tcolorboxenvironment{corollary}{resultbox}
\hypersetup{citecolor=CitationColor,hypertexnames=false,pdftitle={A Sharper Theory of Ball-Proximal Optimization: Convergence and Radius Selection},pdfauthor={Peter Richtarik and Hanmin Li},pdfkeywords={convex optimization, ball-proximal point method, convergence bounds, radius selection}}
\newtheorem{theorem}{Theorem}[section]
\newtheorem{lemma}[theorem]{Lemma}
\newtheorem{proposition}[theorem]{Proposition}
\newtheorem{corollary}[theorem]{Corollary}
\theoremstyle{definition}
\newtheorem{assumption}{Assumption}
\crefname{assumption}{assumption}{assumptions}
\Crefname{assumption}{Assumption}{Assumptions}
\newcommand{\assref}[1]{\Cref{#1} (\nameref*{#1})}
\newtheorem{definition}[theorem]{Definition}
\newtheorem{example}[theorem]{Example}
\theoremstyle{remark}
\newtheorem{remark}[theorem]{Remark}
\newcommand{\R}{\mathbb R}

\newcommand{\norm}[1]{\left\|#1\right\|}
\newcommand{\ip}[2]{\left\langle #1,#2\right\rangle}
\newcommand{\B}{\mathbb{B}}
\newcommand{\X}{X^\star}
\newcommand{\fs}{f_\star}
\newcommand{\BPM}{\textnormal{\textsc{BPM}}}
\newcommand{\brox}{\operatorname{brox}}
\newcommand{\prox}{\operatorname{prox}}
\newcommand{\dist}{\operatorname{dist}}
\newcommand{\dom}{\operatorname{dom}}
\newcommand{\ri}{\operatorname{ri}}
\newcommand{\argmin}{\operatorname*{arg\,min}}
\newcommand{\Proj}{\operatorname{Proj}}

\newcommand{\PaperDate}{28 September 2026}

\hypersetup{pdfsubject={Convex convergence theory and radius selection for the ball-proximal point method}}

\setlist{topsep=4pt,itemsep=3pt}
\title{A Sharper Theory of Ball-Proximal Optimization: Convergence and Radius Selection}
\author{Peter Richt\'arik\qquad Hanmin Li\\[4pt]
\normalsize King Abdullah University of Science and Technology\\
\normalsize Thuwal 23955-6900, Saudi Arabia}
\date{\PaperDate}

\begin{document}
\maketitle

\begin{abstract}\phantomsection\label{concept:abstract}
We study the exact Euclidean ball-proximal point method for proper, closed, convex functions, where each iteration minimizes the objective over a ball centered at the current point. Retaining the objective gap in the decrease of squared distance yields sharper bounds on objective values, stationarity, and the symmetric Bregman distance to a minimizer. For constant radius $t>0$ and initial distance $D_0>0$ to the solution set, the objective gap after $K$ iterations is at most its initial value multiplied by $\exp(-2K^2t^2/D_0^2)$.

We characterize convergence for arbitrary positive radius sequences. If their sum diverges, the method reaches a minimizer in finitely many iterations whenever one exists; otherwise, the objective values converge to the infimum and the iterates escape every bounded set. If the radii are summable, the iterates converge to a possibly nonoptimal point. Self-contraction gives finite trajectory length for bounded iterates and a constant-radius termination bound $O_d(1+D_0/t)$, whose implicit constant depends only on the dimension. A polyhedral family shows that the dimension-independent quadratic bound remains asymptotically sharp.

We also identify a minimum successful geometric decay factor and analyze adaptive radius rules based on subgradients or objective gaps, epigraph reformulation, and relaxed updates. Together, these results strengthen the foundations and convergence guarantees of the method without assuming smoothness or strong convexity.
\end{abstract}

\vspace{3pt}
\noindent\textbf{Keywords:} ball-proximal point method; convex optimization; finite termination; self-contracted sequences; radius selection; oracle complexity.

\noindent\textbf{Mathematics Subject Classification (2020):} Primary 90C25; Secondary 90C60, 65K05.

\section{Introduction}
\label{sec:intro}
We study the convex optimization problem
\begin{equation}\label{eq:problem-intro}
\min_{x\in\R^d} f(x),
\end{equation}
where $f:\R^d\to\R\cup\{+\infty\}$ is proper, closed, and convex.
Unless stated otherwise, we assume neither that the infimum is attained nor that $f$ is smooth.
We focus on the exact ball-oracle iteration considered by \citet[Appendix A]{CarmonEtAlBallOracle} and subsequently developed as the ball-proximal (broximal) point framework by \citet{BPM}.
Starting from $x^0\in\dom f$, the ball-proximal point method (\BPM{}) minimizes $f$ over a Euclidean ball centered at the current point:
\begin{equation}\label{eq:bpm-intro}
x^{k+1}\in\brox_f^{t_k}(x^k)
:=\argmin_{u\in\B(x^k,t_k)}f(u),\qquad t_k>0, \tag{\BPM{}}
\end{equation}
where $\B(x,t)=\{u:\norm{u-x}\le t\}$.
\phantomsection\label{concept:oracle}
A \emph{call} to the exact ball oracle takes a center $x\in\dom f$ and a radius $t>0$ and returns a minimizer $u\in\brox_f^t(x)$ of $f$ over $\B(x,t)$; each BPM step consists of one such call.
\phantomsection\label{concept:terminal}
A step is \emph{terminal} if its output is a global minimizer of $f$, and \emph{nonterminal} otherwise. Equivalently, a step is terminal exactly when its ball contains a global minimizer. If the infimum is not attained, every step is nonterminal.
Throughout the paper, we measure complexity by the number of oracle calls; each BPM iteration uses one such call.

Our analysis builds on two geometric properties of an exact step.
First, retaining the objective gap in the decrease of squared distance yields sharper multi-step bounds on objective suboptimality.
These bounds also give guarantees on the minimum subgradient norm and a symmetric Bregman distance to a minimizer, see \Cref{sec:values,sec:stationarity,sec:primal-dual}.
Second, the geometry of an exact step relative to future iterates implies that bounded trajectories are self-contracted and have finite length in fixed dimension.
This leads to the convergence characterization in \Cref{sec:termination}: nonsummable radii give finite termination when a minimizer exists and convergence of the objective values to the infimum otherwise.
The same trajectory geometry yields a linear constant-radius step bound in fixed dimension, while a polyhedral construction shows that the dimension-independent quadratic bound remains asymptotically sharp.

We give a self-contained treatment intended to serve as a comprehensive reference for exact Euclidean convex \BPM{}, rebuilding the inherited foundations and distinguishing them from the stronger guarantees established here. We then study how reformulation and radius choice affect the method, including epigraphical lifting, adaptive radius rules, and relaxed updates.
We begin with the assumptions and notation, followed by seven equivalent formulations of an exact step and a comparison with prior work.

\paragraph{Standing assumptions.}

The assumptions below are invoked only when needed. In particular, \assref{ass:attained} is not required for results that allow the infimum to be unattained.

\begin{assumption}[Convex objective]\label{ass:convex}
$f:\R^d\to\R\cup\{+\infty\}$ is proper, closed, and convex, with $d<\infty$.
\end{assumption}

\begin{assumption}[Attainment]\label{ass:attained}
The solution set $\X=\argmin f$ is nonempty, and we write $\fs=\min f$.
\end{assumption}

\begin{assumption}[Exact BPM iteration]\label{ass:iteration}
The starting point satisfies $x^0\in\dom f$, every radius is positive, and each update is an exact minimizer in \ref*{eq:bpm-intro}. Once a minimizer is reached, the sequence is held fixed. If $x^0\in\X$, no oracle call is made.
\end{assumption}

\phantomsection\label{concept:finite}
We say that BPM converges finitely if $x^N\in\X$ for some finite $N$; under the stopping convention above, this is finite termination. This property does not require that a point-only oracle provide a test for detecting optimality.

\phantomsection\label{concept:errors}
When \assref{ass:attained} holds, we use $D_k=\dist(x^k,\X)$ for the distance to the solution set and $\Delta_k=f(x^k)-\fs$ for the objective gap. For a constant-radius run, $N_t$ counts iterations up to and including the first terminal step, with $N_t=0$ at an optimal start and $N_t=\infty$ if no minimizer is ever reached.
\Cref{tab:overview} summarizes the main guarantees and points to their precise statements.

\begin{table}[!htbp]
\centering
\begin{minipage}{\textwidth}
\caption{Convergence guarantees and radius policies at a glance. Each row separates the setting, the mathematical guarantee, and its location.}
\label{tab:overview}
\small\setlength{\tabcolsep}{5pt}\renewcommand{\arraystretch}{1.38}
\begin{tabular}{@{}>{\raggedright\arraybackslash}p{.18\textwidth}>{\raggedright\arraybackslash}p{.23\textwidth}>{\raggedright\arraybackslash}p{.445\textwidth}>{\raggedright\arraybackslash}p{.075\textwidth}@{}}
\toprule
\textbf{Quantity} & \textbf{Setting} & \textbf{Bound or conclusion} & \textbf{See}\\
\midrule
\multicolumn{4}{@{}l}{\color{TableInk}\bfseries A. Accuracy after $K$ iterations}\tabularnewline[3pt]
Objective gap & Fixed radius; sequence of nonterminal steps & $\displaystyle\Delta_K\le\Delta_0e^{-2K^2t^2/D_0^2}$ & \S\ref{sec:values}\\
Stationarity & Same setting; $K\ge1$ & $\displaystyle m_K\le\frac{\Delta_0}{t}e^{-2(K-1)^2t^2/D_0^2}$ & \S\ref{sec:stationarity}\\
Symmetric Bregman distance & Same setting; $K\ge1$ & $\displaystyle\mathcal G_K\le\frac{D_0\Delta_0}{t}e^{-2(K-1)^2t^2/D_0^2}$ & \S\ref{sec:primal-dual}\\
\midrule
\multicolumn{4}{@{}l}{\color{TableInk}\bfseries B. Convergence and termination}\tabularnewline[3pt]
Nonsummable radii & $\sum\limits_{k=0}^{\infty} t_k=\infty$ & $f(x^k)\downarrow\inf f$; finite termination if $X^\star\ne\varnothing$, and $\norm{x^k}\to\infty$ otherwise & \S\ref{sec:termination}\\
Summable radii & $\sum\limits_{k=0}^{\infty} t_k<\infty$ & $x^k\to\bar x$, $f(x^k)\downarrow f(\bar x)$; finite length, possibly nonoptimal limit & \S\ref{sec:termination}\\
Iteration count & Fixed radius and dimension & $\displaystyle N_t=O_d\!\left(1+\frac{D_0}{t}\right)$; no dimension-free linear bound & \S\ref{sec:termination}\\
Total length & Fixed radius; nearest-minimizer selection at termination & $\displaystyle S_t\le\Psi_t(D_0)\le\frac{D_0^2}{t}+\frac t4$ & \S\ref{sec:termination}\\
Geometric radii & $t_k=t\rho^k$, $0<t<D_0$ & Minimum successful $\rho_\star$; $D_k=\Theta(\rho_\star^k)$ there, without finite termination & App.~\ref{app:geometric-factor}\\
\midrule
\multicolumn{4}{@{}l}{\color{TableInk}\bfseries C. Radius rules and reformulations}\tabularnewline[3pt]
Subgradient rule & $t_k=\tau\norm{h^k}$, $h^k\in\partial f(x^k)$ & $\displaystyle\Delta_K\le\frac{D_0^2}{4\tau K}$ & \S\ref{sec:calibration}\\
Objective-gap rule & $t_k=\tau\Delta_k^\alpha$, known $f_\star$ & $\displaystyle\Delta_K\le\left(\Delta_0^{-\alpha}+\frac{\alpha\tau K}{D_0}\right)^{-1/\alpha}$ & \S\ref{sec:calibration}\\
Epigraph lift & Start on the graph of $f$ & Projected steps are BPM with induced radii; error and length conversions are explicit & \S\ref{sec:reformulations}\\
Radius ordering & Same objective and start & $s<t$ can give $N_s=2<N_t=3$ & App.~\ref{app:radius-monotonicity}\\
Relaxation & Fixed $t>0$, $0<\lambda<2$; centers in $\dom f$ & $x^k\to x^\star\in X^\star$ & \S\ref{sec:relaxation}\\
\bottomrule
\end{tabular}
\par\vspace{5pt}\footnotesize\begin{itemize}[leftmargin=0pt,label={},itemsep=3pt]
\item $D_k=\dist(x^k,X^\star)$, $\Delta_k=f(x^k)-f_\star$, and $m_K=\dist(0,\partial f(x^K))$. The symmetric Bregman distance $\mathcal G_K$ is defined in \S\ref{sec:primal-dual}; $N_t$ includes the terminal step, and $S_t$ includes its displacement.
\item Rows using $X^\star$, $D_k$, or $\Delta_k$ assume attainment, except the two radius-summability rows, which explicitly distinguish the cases. Stronger implicit bounds and bounds over iterations $j,\ldots,K-1$ appear in \S\ref{sec:values}. Exact hypotheses are stated at the indicated locations.
\end{itemize}
\end{minipage}
\end{table}

\subsection{One optimality system, seven formulations}
\label{sec:four-views}
Before turning to convergence, we give seven equivalent descriptions of a single exact \BPM{} step.
The first is the ball-minimization formulation \textup{(\ref*{eq:bpm-intro})}. 
The remaining six express the same step as a normalized implicit subgradient update, an explicit update for the ball envelope, a projection onto a sublevel set, a Fenchel-dual problem, a scalar dual problem, and an epigraph-constrained problem. 
Their equivalence is governed by a common first-order optimality condition.

To state these formulations for a generic exact step, suppose \assref{ass:convex} holds, fix $x\in\dom f$ and $t>0$, and let $u\in\brox_f^t(x)$. 

We use the ball envelope introduced by \citet[Definition 5.1]{BPM} and the convex conjugate:

\begin{equation}
\label{eq:envelope-definition}
F_t(y)=\inf_{\norm{v-y}\le t}f(v),\qquad
f^*(g)=\sup_z\{\ip gz-f(z)\}.
\end{equation}

The envelope $F_t$ is itself proper, closed, and convex, but need not be smooth. 
For example, for $f(z)=|z|$ on $\R$,

\[
F_t(x)\overset{\eqref{eq:envelope-definition}}{=}\max\{|x|-t,0\},
\]

which is nonsmooth at $x=\pm t$.

At iteration $k$ of \ref*{eq:bpm-intro}, the generic notation above corresponds to $x=x^k$, $t=t_k$, and $u=x^{k+1}$. 
The proposition below first gives an optimality system that applies to both terminal and nonterminal iterations. 
The seven formulations are then stated for a nonterminal step, for which the normalized subgradient expressions are well defined.
The terminal case is discussed immediately after the proposition.

\begin{proposition}[Seven equivalent formulations of \BPM{}]\label{prop:four-views}
Fix $x\in\dom f$ and $t>0$. The relation $u\in\brox_f^t(x)$, where the operator is defined in \ref*{eq:bpm-intro}, is equivalent to the existence of $g$ satisfying
\begin{equation}\label{eq:view-system}
\boxed{g\in\partial f(u),\qquad x-u\in t\partial\norm{\cdot}(g).}
\end{equation}
The optimality system \eqref{eq:view-system} applies to both terminal and nonterminal iterations. On a nonterminal step, the following seven formulations are equivalent, the output $u$ is unique, and $\norm{x-u}=t$.
\begin{enumerate}[label=(\roman*),leftmargin=*]

\item \emph{Exact ball minimization (the original formulation).} The relation $u\in\brox_f^t(x)$ means
\begin{equation}\label{eq:view-ball}
  u\in\argmin_{v\in\B(x,t)}f(v).
\end{equation}
This is the defining \BPM{} step in \ref*{eq:bpm-intro}.

\item \emph{Normalized implicit subgradient step on $f$.} There exists a nonzero subgradient $g\in\partial f(u)$ satisfying
\begin{equation}\label{eq:view-implicit}
u=x-t\frac{g}{\norm g}
=\prox_{\lambda f}(x),\qquad \lambda=\frac{t}{\norm g}.
\end{equation}
We call $g$ a radial subgradient because it belongs to $\partial f(u)$ and is a positive multiple of $x-u$.
Here the proximal map \citep{Rockafellar1976} $\prox_{\lambda f}(x)$ minimizes $f(v)+\norm{v-x}^2/(2\lambda)$. 
The subgradient is evaluated at the new point. 
An arbitrary subgradient of $f$ there need not be radial.

\item \emph{Normalized explicit subgradient step on the ball envelope.} The same subgradient belongs to $\partial F_t(x)$, and
\begin{equation}\label{eq:view-explicit}
 u=x-t\frac{g}{\norm g},\qquad g\in\partial F_t(x).
\end{equation}
At such a center every envelope subgradient is nonzero and gives this same normalized direction. More precisely, for every $u\in\brox_f^t(x)$, including an optimal point,
\begin{equation}\label{eq:view-common-subgrad}
 \partial F_t(x)=\{g\in\partial f(u):\ip g{x-u}=t\norm g\}.
\end{equation}

\item \emph{Projection onto the returned sublevel set.} With $C_u=\{z:f(z)\le f(u)\}$,
\begin{equation}\label{eq:view-sublevel}
 u=\Proj_{C_u}(x),\qquad \norm{x-u}=t.
\end{equation}
The full-radius condition is essential: projection onto a higher sublevel set may produce a point strictly inside the ball. 
The level $f(u)$ is determined by the ball-minimization problem itself and is not an independent parameter of the projection formulation.

\item \emph{Fenchel-dual formulation.} The subgradient solves
\begin{equation}\label{eq:view-dual}
g\in\argmin_h\{f^*(h)+t\norm h-\ip xh\},
\qquad u\in\partial f^*(g),\quad x-u\in t\partial\norm{\cdot}(g).
\end{equation}
Equivalently, the primal and dual optimal values satisfy
\begin{equation}\label{eq:view-dual-value}
F_t(x)=\max_g\{\ip xg-f^*(g)-t\norm g\}.
\end{equation}
This dual problem has a solution even if $\partial f(x)$ is empty. When a previous subgradient $p\in\partial f(x)$ is available, it can also be written as
\begin{equation}\label{eq:view-dual-bregman}
 g\in\argmin_h\{t\norm h+D_{f^*}^{x}(h,p)\},
 \quad D_{f^*}^{x}(h,p)=f^*(h)-f^*(p)-\ip x{h-p}.
\end{equation}
The term $D_{f^*}^{x}(h,p)$ is the generalized Bregman distance associated with $f^*$ and the chosen subgradient $x\in\partial f^*(p)$ \citep{Bregman1967,Burger2015Bregman}. Thus the dual problem minimizes the norm penalty plus this generalized Bregman distance; neither smoothness nor strict convexity of $f^*$ is required.

\item \emph{Lagrangian dual formulation.} Dualizing the ball constraint gives
\begin{equation}\label{eq:view-scalar-dual}
 q_t(\gamma)=\inf_z\left\{f(z)+\frac\gamma2(\norm{z-x}^2-t^2)\right\},
 \qquad F_t(x)=\max_{\gamma\ge0}q_t(\gamma).
\end{equation}
Here $q_t(0)=\inf f$. 
On a nonterminal step, an optimal $\gamma_\star>0$ exists and
\begin{equation}\label{eq:view-scalar-path}
 u=u_{\gamma_\star},\qquad
 u_\gamma=\prox_{f/\gamma}(x),\qquad \norm{u_{\gamma_\star}-x}=t.
\end{equation}
The function $q_t$ is concave, is differentiable for $\gamma>0$, and satisfies
\begin{equation}\label{eq:view-scalar-derivative}
 q_t'(\gamma)=\tfrac12(\norm{u_\gamma-x}^2-t^2).
\end{equation}
The displacement $\norm{u_\gamma-x}$ is continuous and nonincreasing in $\gamma>0$. Thus an optimal multiplier $\gamma_\star$ selects a point on the proximal path whose distance from $x$ is exactly the prescribed radius $t$. 
The optimal multiplier need not be unique.

\item \emph{Epigraph formulation with a cylindrical constraint.} The ball-minimization problem can be lifted to the epigraph of $f$ as
\begin{equation}\label{eq:view-epigraph-cylinder}
(u,f(u))\in\argmin_{v\in\R^d,\ s\in\R}
\bigl\{s:\ f(v)\le s,\ \norm{v-x}\le t\bigr\}.
\end{equation}
The feasible set is
$\operatorname{epi}f\cap(\B(x,t)\times\R)$, where
$\operatorname{epi}f=\{(v,s):f(v)\le s\}$.
For any feasible $v$, minimizing over $s$ gives $s=f(v)$, so the lifted problem is equivalent to minimizing $f(v)$ over $\B(x,t)$. Thus every minimizer $v$ of the ball problem corresponds to the epigraph minimizer $(v,f(v))$, and conversely. This equivalence also holds on terminal iterations.
\end{enumerate}
\end{proposition}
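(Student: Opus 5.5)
The plan is to derive everything from one application of the convex optimality condition for the constrained problem $\min_{v}\{f(v)+\iota_{\B(x,t)}(v)\}$. Since $x\in\dom f$ lies in the interior of $\B(x,t)$, the sum rule for subdifferentials applies without further qualification. It gives $u\in\brox_f^t(x)$ if and only if $0\in\partial f(u)+N_{\B(x,t)}(u)$.

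The normal cone is $N_{\B(x,t)}(u)=\{0\}$ when $\norm{u-x}<t$, and $\{\mu(u-x):\mu\ge0\}$ when $\norm{u-x}=t$. So there is $g\in\partial f(u)$ with $-g\in N_{\B(x,t)}(u)$. I will check directly that this is equivalent to $x-u\in t\,\partial\norm{\cdot}(g)$. If $g=0$, then $\partial\norm{\cdot}(0)=\B(0,1)$ and the condition just says $\norm{x-u}\le t$. If $g\ne0$, it says $x-u=t\,g/\norm g$. This establishes \eqref{eq:view-system} in both the terminal and nonterminal cases. On a nonterminal step $u\notin\X$, so $0\notin\partial f(u)$, which forces $g\neq0$ and $\norm{x-u}=t$.

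Uniqueness needs a separate argument. If $u_1\ne u_2$ were both minimizers, their midpoint would also be a minimizer (by convexity) and would lie strictly inside the ball. It would then have $0$ in its subdifferential and be a global minimizer, contradicting nonterminality.

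Formulations (i)$\Leftrightarrow$(ii) then follow immediately. For the prox identity, note that $g=\norm g(x-u)/t=(x-u)/\lambda$ is exactly the prox optimality condition with $\lambda=t/\norm g$.

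For (iv), the step $x-u=t\,g/\norm g$ with $g\in\partial f(u)$ means $x-u$ is a nonnegative multiple of a subgradient at $u$. This puts $x-u$ in the normal cone of $C_u$ at $u$, because $g\ne0$ and $u$ is not a minimizer (Slater holds for the sublevel set). So $u=\Proj_{C_u}(x)$. Conversely, the projection condition gives a multiplier $\mu\ge0$ and $g'\in\partial f(u)$ with $x-u=\mu g'$. Together with $\norm{x-u}=t$, this recovers \eqref{eq:view-system}.

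For (v) and (vi), I use conjugate duality. We have $F_t=f\,\square\,\iota_{\B(0,t)}$, an infimal convolution, and the conjugate of $\iota_{\B(0,t)}$ is $t\norm{\cdot}$. So $F_t^*=f^*+t\norm{\cdot}$, and Fenchel duality for $\inf_v\{f(v)+\iota_{\B(0,t)}(x-v)\}$ holds with dual attainment because $t\norm\cdot$ is finite everywhere. This gives \eqref{eq:view-dual-value}. The dual optimality conditions, $u\in\partial f^*(g)$ and $x-u\in t\partial\norm\cdot(g)$, are \eqref{eq:view-system} rewritten with Fenchel--Young. The Bregman form \eqref{eq:view-dual-bregman} is obtained by subtracting the constants $f^*(p)-\ip xp$.

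The identity \eqref{eq:view-common-subgrad}, which gives (iii), comes from the exactness of the infimal convolution at $u$. The standard formula says that $\partial F_t(x)=\partial f(u)\cap\partial\iota_{\B(0,t)}(x-u)$ whenever the infimum is attained at $u$. Moreover, $g\in\partial\iota_{\B(0,t)}(x-u)$ is equivalent to $\ip g{x-u}=t\norm g$. On a nonterminal step, every such $g$ must be nonzero, since otherwise $x$ would minimize $F_t$ and so $\inf f$ would be attained inside the ball. Hence every envelope subgradient gives the same direction $(x-u)/t$.

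For (vi), the Lagrangian dual, strong duality holds by Slater's condition (take $v=x$), so $F_t(x)=\max_{\gamma\ge0}q_t(\gamma)$. For $\gamma>0$, $q_t(\gamma)=\gamma\, e_{1/\gamma}f(x)$ up to the constant $-\gamma t^2/2$, where $e$ is the Moreau envelope. Differentiating along $\gamma$ via the envelope theorem (Danskin, with the unique minimizer $u_\gamma$) gives \eqref{eq:view-scalar-derivative}. The monotonicity of $\norm{u_\gamma-x}$ in $\gamma$ is the standard monotonicity of prox displacements. Continuity follows from continuity of the prox in its parameter.

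For existence of $\gamma_\star>0$ on a nonterminal step, I use complementary slackness. A zero multiplier would make $u$ a global minimizer. So $\gamma_\star>0$, $\norm{u_{\gamma_\star}-x}=t$, and uniqueness of $u$ gives $u=u_{\gamma_\star}$.

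Finally, (vii) is the elementary reduction already sketched in the statement: minimizing over $s$ first sets $s=f(v)$.

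The main obstacle I expect is (vi), specifically attaining the dual maximum with $\gamma_\star>0$ when $\inf f$ may be $-\infty$ or unattained. Here I would lean on Slater to get dual attainment, and then rule out $\gamma=0$ using $q_t(0)=\inf f<F_t(x)$, which holds on a nonterminal step. A secondary delicate point is justifying the subdifferential formula for the infimal convolution in (iii) without smoothness. For this I would verify \eqref{eq:view-common-subgrad} directly from the definition of subgradients together with $F_t(y)\le f(u+y-x)$.
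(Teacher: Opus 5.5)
Your proposal is correct in substance, and its backbone matches the paper's. Both use Fermat's rule with the normal cone of the ball (qualified because $\iota_{\B(x,t)}$ is continuous at $x\in\dom f$), uniqueness by the midpoint argument, and (i)--(ii) and (vii) read off directly. Your (iii) uses the exact infimal-convolution formula $\partial F_t(x)=\partial f(u)\cap N_{\B(0,t)}(x-u)$, which is what the paper proves by splitting $F_t(x)+F_t^*(g)-\ip{g}{x}$ into the nonnegative slacks $f(u)+f^*(g)-\ip{g}{u}$ and $t\norm{g}-\ip{g}{x-u}$.

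You genuinely differ in (iv)--(vi). There you invoke general KKT, Fenchel and Lagrangian duality theorems, whereas the paper builds every dual object from the single radial subgradient $g=c(x-u)$.
\begin{itemize}
\item The converse of (iv) is a midpoint argument: if some $z\in\B(x,t)$ had $f(z)<f(u)$, then $(z+u)/2\in C_u$ would lie strictly inside the ball, contradicting $\dist(x,C_u)=t$.
\item Dual attainment in (v) is just $\partial F_t(x)\neq\varnothing$, which (iii) and the optimality system supply.
\item In (vi), the choice $\gamma=c$ already gives $q_t(c)=f(u)=F_t(x)$, so weak duality yields a positive optimal multiplier without any Slater argument. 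Differentiability and monotonicity then follow from the secant bounds $a(\eta)\le(q_t(\eta)-q_t(\gamma))/(\eta-\gamma)\le a(\gamma)$ and continuity of $u_\gamma$.
\end{itemize}
The paper's route is self-contained and, beyond the one sum-rule step in the lemma, needs no constraint qualification. Yours is shorter once standard theorems are granted, but each citation carries hypotheses, and three of your checks need repair.

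\textbf{Dual attainment in (v).} Finiteness of $t\norm{\cdot}$ is the conjugate-side qualification. It yields closedness of $F_t$ and attainment of the primal infimum, not of the dual maximum. Dual attainment comes instead from the primal-side condition you already used in your first paragraph ($\iota_{\B(x,t)}$ continuous at $x\in\dom f$, equivalently $x\in\operatorname{int}\dom F_t$). It also follows directly from the $g$ of the optimality system, which makes both Fenchel--Young slacks vanish.

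\textbf{Converse of (iv).} For extended-valued $f$, the normal cone to $C_u$ at a boundary point of $\dom f$ is in general only the closure of the cone generated by $\partial f(u)$. In KKT terms the multiplier may be $\mu=0$, giving merely $x-u\in N_{\dom f}(u)$. That branch is excluded because $x\in\dom f$ forces $\norm{x-u}^2\le0$, but you must say so; Slater does not rule it out. In the forward direction, by contrast, Slater is unnecessary, since the subgradient inequality alone gives $g\in N_{C_u}(u)$.

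\textbf{Derivative in (vi).} Correctly, $q_t(\gamma)=e_{1/\gamma}f(x)-\gamma t^2/2$, with no leading factor $\gamma$. Danskin's theorem in its usual form needs a compact index set, whereas here the inner infimum runs over $\R^d$ with a merely lower semicontinuous, extended-valued $f$. You therefore need either a localization argument (boundedness of $u_\eta$ for $\eta$ near $\gamma$) or the paper's secant bounds. The latter also give monotonicity of $\norm{u_\gamma-x}$ without citing prox-displacement monotonicity separately.
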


Proofs of these equivalences are given in \Cref{app:four-views}.

\paragraph{How the views are connected.}

Views (i) and (iv) describe the same point by exchanging the roles of objective value and distance. On a nonterminal step, let $\alpha=f(u)=F_t(x)$. Then

\begin{equation}
\label{eq:view-exchange}\min_{\norm{v-x}\le t}f(v)\overset{\eqref{eq:view-ball}}{=}\alpha,\qquad\min_{f(v)\le\alpha}\norm{v-x}\overset{\eqref{eq:view-sublevel}}{=}t.
\end{equation}

Thus view (i) fixes the radius $t$ and determines the lowest attainable objective value $\alpha$, whereas view (iv) fixes this attained level and recovers the same point as the nearest point in the corresponding sublevel set. In particular, $\alpha$ is determined by the ball-minimization problem rather than specified in advance.

Views (ii) and (iii) use the same subgradient at two different points. In the implicit formulation, $g\in\partial f(u)$ is evaluated at the new point $u$, while in the explicit formulation, the same subgradient satisfies $g\in\partial F_t(x)$ at the current center $x$. 
Both formulations therefore produce the same normalized direction from $x$ to $u$.

View (v) is the Fenchel-dual formulation of the ball problem in view (i). The connection is provided by the ball envelope:

\begin{equation}
\label{eq:view-conjugate}
F_t\overset{\eqref{eq:envelope-definition}}{=}f\mathbin\square\delta_{\B(0,t)},\qquad
F_t^*=f^*+t\norm{\cdot}.
\end{equation}

Here $\square$ denotes infimal convolution, and $\delta_C$ is the indicator function of $C$, equal to zero on $C$ and $+\infty$ outside $C$. The dual optimizers are precisely the elements of $\partial F_t(x)$. On a nonterminal step, these subgradients may have different magnitudes but determine the same normalized direction $x-u$. The Bregman representation in \eqref{eq:view-dual-bregman} additionally requires a subgradient $p\in\partial f(x)$ at the current center. Along a \BPM{} trajectory, such a subgradient may be inherited from the preceding nonterminal step when it is retained by the oracle.

View (vi) gives a scalar alternative to the vector dual formulation. The multiplier $\gamma_\star$ selects a point on the proximal path in view (ii) whose displacement from $x$ equals the prescribed radius $t$. The scalar and vector subgradients are related by

\[
g\overset{\eqref{eq:radial}}{=}\gamma_\star(x-u),
\qquad
\gamma_\star=\frac{\norm g}{t}.
\]

Since $\norm{u_\gamma-x}$ is continuous and nonincreasing in $\gamma$, an optimal multiplier can be found by solving the one-dimensional equation
$\norm{u_\gamma-x}=t$, for example by a bracketing method when proximal evaluations are available.
This is the Lagrange-multiplier form of the proximal and trust-region connections described in \citet[Sections 3 and 7]{BPM}.

Finally, view (vii) lifts the ball problem to the epigraph of $f$. The spatial constraint $\norm{v-x}\le t$ is unchanged, while the objective value is represented by the additional variable $s$. Since minimizing $s$ subject to $f(v)\le s$ forces $s=f(v)$ at optimality, the lifted problem has exactly the same minimizing $v$ as view (i). This is the standard epigraph reformulation of an optimization problem \citep[Section 4.1.3]{BoydVandenberghe2004}.

\paragraph{Terminal iterations.}

On a terminal step, the ball $\B(x,t)$ contains a global minimizer, so every exact \BPM{} output is itself a global minimizer. The optimality system \eqref{eq:view-system} remains valid with $g=0$: since $\partial\norm{\cdot}(0)$ is the unit ball, the inclusion $x-u\in t\partial\norm{\cdot}(0)$ is equivalent to $\norm{x-u}\le t$. The normalized subgradient formulations do not apply with $g=0$, since $g/\norm g$ is undefined.

In the scalar dual formulation, $\gamma=0$ is optimal because $q_t(0)=\inf f=F_t(x)$, although this does not determine which minimizer in the ball is returned. Likewise, in the projection formulation, $C_u=\X$, so $\Proj_{C_u}(x)$ selects a nearest minimizer.

The normalized implicit and explicit subgradient descriptions, together with their radial geometry, build on \citet[Table 1 and Sections 3 and 5]{BPM}. The connection with sublevel projection also appears in proximal self-contraction theory \citep{DaniilidisEtAl2015}.

\paragraph{A common example for the seven formulations.}

The following example illustrates how the seven formulations describe the same exact \BPM{} step. Consider

\begin{equation}
\label{eq:common-example}
  f(z)=\frac{1}{2}(z_1^2+4z_2^2),\qquad x=\left(2,\frac{5}{4}\right),\qquad t=\sqrt2.
\end{equation}

The ball minimization problem returns $u=(1,1/4)$, for which

\[
  g=\nabla f(u)\overset{\eqref{eq:common-example}}{=}(1,1)=x-u,\qquad f(u)=\frac58,\qquad \lambda\overset{\eqref{eq:prox-equivalence}}{=}\frac{t}{\norm g}=1.
\]

Thus the common optimality system \eqref{eq:view-system} holds, and the same step can be read through views (i)--(vi) in \Cref{fig:views-primal,fig:views-gradients,fig:views-dual,fig:views-scalar}. For view (vii), the corresponding epigraph optimizer is
$(u,f(u))=((1,1/4),5/8)$.
Although this example is smooth for ease of visualization, the equivalences in \Cref{prop:four-views} do not require smoothness.

\begin{figure}[p]
\centering
\refstepcounter{figure}\label{fig:views-primal}
{\small\bfseries Figure \thefigure. Ball minimization and sublevel projection\par}
\includegraphics[width=\textwidth]{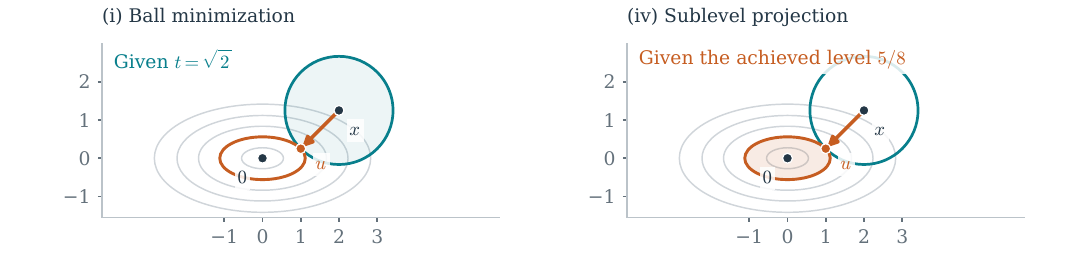}
\vspace{2pt}
\refstepcounter{figure}\label{fig:views-gradients}
{\small\bfseries Figure \thefigure. Implicit and envelope updates\par}
\includegraphics[width=\textwidth]{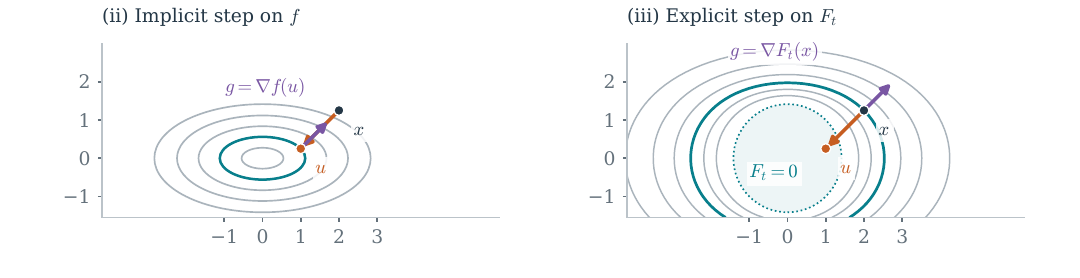}
\vspace{2pt}
\refstepcounter{figure}\label{fig:views-dual}
{\small\bfseries Figure \thefigure. Vector duality\par}
\includegraphics[width=\textwidth]{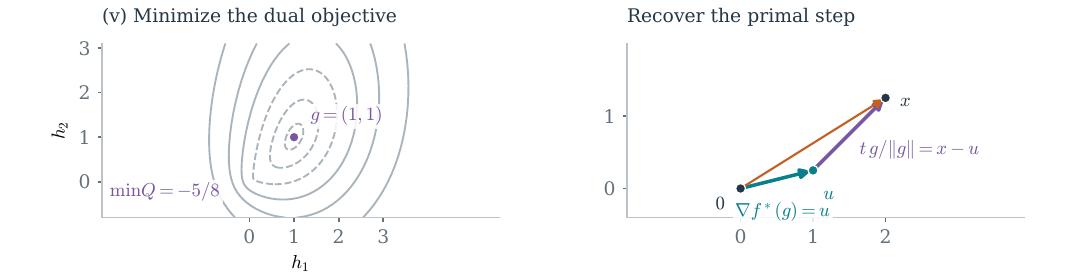}
\vspace{2pt}
\refstepcounter{figure}\label{fig:views-scalar}
{\small\bfseries Figure \thefigure. Scalar duality\par}
\includegraphics[width=\textwidth]{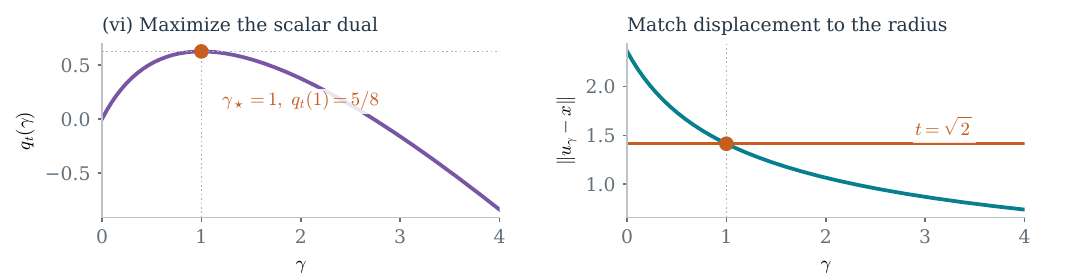}
\vspace{2pt}
\caption*{Illustrations of formulations (i)--(vi) in \Cref{prop:four-views} for a single BPM step with $f(z)=\tfrac12(z_1^2+4z_2^2)$, $x=(2,5/4)$, and $t=\sqrt2$, yielding $u=(1,1/4)$ and $g=(1,1)$. Figure~\ref{fig:views-primal} illustrates parts (i) and (iv): ball minimization \eqref{eq:view-ball} and projection onto the achieved sublevel $f(u)=5/8$ in \eqref{eq:view-sublevel}. Figure~\ref{fig:views-gradients} illustrates parts (ii) and (iii): the implicit step on $f$ in \eqref{eq:view-implicit} and the explicit step on $F_t$ in \eqref{eq:view-explicit}, using the same subgradient $g$. Figure~\ref{fig:views-dual} illustrates part (v), \eqref{eq:view-dual}: the vector-dual minimum $Q(g)=-5/8$ and the reconstruction $x=\nabla f^*(g)+tg/\norm g$. Figure~\ref{fig:views-scalar} illustrates part (vi): the scalar-dual maximum $q_t(1)=5/8$ in \eqref{eq:view-scalar-dual} and the matching displacement $\norm{u_1-x}=t$ in \eqref{eq:view-scalar-path}.}
\end{figure}

\subsection{Sharpening and extending the BPM theory}

The original analysis of \citet{BPM} establishes finite termination within $\lceil D_0^2/t^2\rceil$ constant-radius iterations and the objective suboptimality bound $\Delta_K\le\Delta_0(1+t/D_0)^{-K}$.
The results developed here sharpen and extend this theory in several directions.
In \Cref{sec:values}, retaining the objective decrease in the bound on squared-distance decrease yields stronger multi-step bounds on objective suboptimality, which in turn give guarantees on the minimum subgradient norm and a symmetric Bregman distance to a minimizer.
The original analysis already allows arbitrary positive radii and guarantees finite termination when their squared sum reaches the initial squared distance to the solution set \citep[Theorem~8.1 and Corollary~8.2]{BPM}. In \Cref{sec:termination}, we strengthen this theory by characterizing convergence through the sum of the radii themselves and by treating problems whose infimum is not attained.
When a minimizer exists, the same trajectory analysis improves the constant-radius step bound to linear dependence on $D_0/t$ in fixed dimension, while a polyhedral family shows why the dimension-independent quadratic bound remains asymptotically sharp.

The later sections develop the effects of reformulation, adaptive radius selection, and relaxation beyond the convergence guarantees in the original analysis.
\Cref{sec:reformulations} determines which reformulations preserve exact ball calls and shows how epigraphical lifting changes the induced radius sequence and associated guarantees.
\Cref{sec:calibration} studies the effect of radius choice, including geometric schedules, nonmonotonicity of the fixed-radius termination count, and adaptive rules based on subgradient or objective-gap information.
Finally, \Cref{sec:relaxation} extends the analysis to relaxed updates of the ball-proximal step.

\subsection{Related work}
\label{sec:related-work}

The exact recurrence \ref*{eq:bpm-intro} appears in \citet[Appendix A, Algorithm 6]{CarmonEtAlBallOracle}, together with the segment contraction used in \Cref{thm:segment}.
The later work of \citet{BPM} develops the broximal framework, its geometric and proximal connections, and the finite-termination analysis on which we build.

Ball optimization oracles also serve as primitives for accelerated convex optimization.
\citet{CarmonEtAlBallOracle} show that additional state and different query rules can reduce the number of oracle calls compared with the BPM recurrence.
\citet{CarmonEtAl2021InsideBall} further refine ball-oracle acceleration and develop an implementation for maximal-loss minimization through a smoothed objective. 
These works therefore address acceleration and the cost of implementing ball oracles.

Several subsequent works develop related broximal constructions in other settings. 
Non-Euclidean \BPM{} replaces the Euclidean ball by more general norm geometry \citep{NonEuclideanBPM}; stabilized proximal methods combine proximal updates with trust-region-type constraints \citep{StabilizedPPM}; and broximal alignment extends related geometric ideas beyond the convex setting \citep{BroximalAlignment}. 
Local linear minimization methods also admit a forward--backward interpretation in which a proximal backward step is replaced by a broximal one \citep{LocalLMO}. 

The trajectory-length argument uses the classical theory of self-contracted curves and sequences \citep{DaniilidisEtAl2015,BohmDaniilidis2020}.
The analysis of relaxed centers uses the cutter framework of \citet{CegielskiCensor2012} to obtain a Fej\'er-type decrease.
The subgradient-based radius rule uses classical proximal-point analysis \citep{Rockafellar1976} and the sharp value bound of \citet{TaylorHendrickxGlineur2017}; the objective-gap-based rules follow directly from the \BPM{} segment contraction.

\section{One-step geometry and progress}
\label{sec:foundation}
Throughout this section, we invoke \assref{ass:convex} and \assref{ass:iteration} and impose \assref{ass:attained} only when a minimizer is required. Under \assref{ass:attained}, let
$p^k=\Proj_{\X}(x^k)$.
The terminal and nonterminal cases introduced in \Cref{sec:four-views} are then characterized by
$D_k\le t_k$ and $D_k>t_k$, respectively.

\subsection{Geometry of an exact step}

The next lemma collects the geometric properties of an exact \BPM{} step used below. A nonterminal step has a unique output on the boundary of the ball, together with a radial subgradient and an equivalent proximal representation. Under attainment, the lemma also characterizes the possible terminal outputs. These facts are established in \citet[Appendices D--E]{BPM}, we include the proof in \Cref{proof:foundation:1}.

\begin{lemma}[Characterization of an exact \BPM{} step]\label{lem:certificate}
Under \assref{ass:convex}, let $x\in\dom f$ and $t>0$. The set $\brox_f^t(x)$ is nonempty and compact.
If the step is nonterminal, the output $u$ is unique and satisfies
\begin{equation}\label{eq:radial}
\norm{x-u}=t,\qquad g=c(x-u)\in\partial f(u),\qquad c>0.
\end{equation}
Under \assref{ass:attained}, if $D(x):=\dist(x,\X)\le t$, then $\brox_f^t(x)=\X\cap\B(x,t)$. At $D(x)=t$ this set is the singleton $\{\Proj_{\X}(x)\}$. On a nonterminal step,
\begin{equation}\label{eq:prox-equivalence}
u=\prox_{\lambda f}(x),\qquad
\lambda=\frac{1}{c}=\frac{t}{\norm g},
\end{equation}
where $\prox_{\lambda f}(x)=\argmin_z\{f(z)+\norm{z-x}^2/(2\lambda)\}$.
\end{lemma}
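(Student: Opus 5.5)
The plan is to prove the four claims in order: existence and compactness, the structure of a nonterminal step, the terminal characterization, and finally the proximal representation.

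\emph{Existence and compactness.} Since $f$ is proper, closed, and convex, it is lower semicontinuous. The ball $\B(x,t)$ is compact and contains $x\in\dom f$, so $f$ attains its infimum over the ball, and that infimum is finite. The solution set is $\{u\in\B(x,t): f(u)\le F_t(x)\}$, which is a closed subset of a compact set and hence compact.

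\emph{Nonterminal structure.} I would use the optimality condition for minimizing $f+\delta_{\B(x,t)}$. The constraint qualification holds because $\ri\dom f$ meets the interior of the ball: take any $z\in\ri\dom f$, note that the segment from $x$ to $z$ has points of $\ri\dom f$ arbitrarily close to $x$, and these lie in $\operatorname{int}\B(x,t)$. The sum rule then yields $g\in\partial f(u)$ with $-g\in N_{\B(x,t)}(u)$.

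Now suppose the step is nonterminal. Then $u$ is not a global minimizer. If $\norm{u-x}<t$, then $u$ would be a local, hence global, minimizer of the convex $f$. So $\norm{u-x}=t$, and the normal cone there is $\{c(u-x):c\ge0\}$, giving $g=c(x-u)$ with $c\ge0$. If $c=0$, then $0\in\partial f(u)$, which contradicts nonterminality, so $c>0$.

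For uniqueness, suppose $u\ne u'$ are both minimizers. Their midpoint is also a minimizer by convexity. It lies strictly inside the ball by strict convexity of the Euclidean ball: two distinct points on the sphere have a midpoint of norm $<t$ from $x$. This contradicts the fact that every minimizer of a nonterminal step lies on the boundary.

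\emph{Terminal characterization.} Assume attainment and $D(x)\le t$. The projection $\Proj_{\X}(x)$ lies in the ball, so the minimum over the ball equals $\fs$. Hence the set of ball minimizers is exactly $\X\cap\B(x,t)$. When $D(x)=t$, any $u$ in this set satisfies $\norm{u-x}\le t=\dist(x,\X)$, so $u$ is a nearest point of the closed convex set $\X$. Since projection onto a closed convex set is unique, $u=\Proj_{\X}(x)$.

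\emph{Proximal representation.} Write $\lambda=1/c$. The relation $g=c(x-u)\in\partial f(u)$ becomes $0\in\partial f(u)+(u-x)/\lambda$, which is the optimality condition of the strongly convex prox problem. The prox problem has a unique minimizer, so $u=\prox_{\lambda f}(x)$. Finally, $\norm g=ct$ gives $\lambda=t/\norm g$.

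The main technical point I expect is justifying the subdifferential sum rule without assuming $x\in\operatorname{int}\dom f$. The relative-interior argument above handles this. Alternatively, I would fall back on view (vi) of \Cref{prop:four-views}, which is the Lagrangian duality with Slater's point $x$ strictly inside the ball; this directly gives a multiplier $\gamma_\star>0$ with $u=\prox_{f/\gamma_\star}(x)$.
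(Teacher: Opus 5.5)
Your proposal is correct and follows essentially the same route as the paper. Both use attainment on the compact ball, the local-to-global and midpoint arguments for boundary location and uniqueness on a nonterminal step, the relative-interior point near $x$ to justify the sum rule with the boundary normal cone $\{c(u-x):c\ge0\}$, and the reading of $g=c(x-u)$ as the proximal optimality condition with $\lambda=1/c$. One caution concerns your fallback: the paper derives view (vi) of \Cref{prop:four-views} from this very lemma, so citing it would be circular, and you would instead need to invoke Slater's theorem directly.
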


The subgradient $g$ in \eqref{eq:radial} need not be returned by the ball oracle. 
Its existence is sufficient for the analysis. 
When $D(x)<t$, a terminal step may have several possible outputs. 
For trajectory arguments that require a definite terminal point, we choose the minimizer nearest to the current center.

\begin{definition}[Nearest-minimizer selection at termination]\label{def:projected}
Under \assref{ass:convex} and \assref{ass:attained}, for $x\in\dom f$ define
\begin{equation*}
T_t(x)=
\begin{cases}
u, & D(x)>t,\quad \text{where } \brox_f^t(x)=\{u\},\\
\Proj_{\X}(x), & D(x)\le t.
\end{cases}
\end{equation*}
Then
\begin{equation*}
\norm{x-T_t(x)}=\min\{t,D(x)\}.
\end{equation*}
\end{definition}
Before the first terminal step, this selection coincides with \BPM{}, so it preserves the termination count. Selecting the nearest terminal minimizer requires additional information: a point-only exact ball oracle need not return that particular minimizer.

\subsection{Progress from an exact step}

We use the generalized Bregman distance associated with a specified subgradient \citep{Bregman1967,Burger2015Bregman}. For $g\in\partial f(u)$ and $z\in\dom f$, this distance from $z$ to $u$ is defined by

\begin{equation}
\label{eq:bregman-definition}
D_f^g(z,u)
=f(z)-f(u)-\ip{g}{z-u}.
\end{equation}

By the subgradient inequality, $D_f^g(z,u)\ge0$. This quantity depends on the chosen subgradient and is not generally a metric. 
The following theorem gives the basic one step progress relations.

\begin{theorem}[Exact progress identity]\label{thm:identity}
Under \assref{ass:convex} and \assref{ass:iteration}, consider a nonterminal step with output $u=x^{k+1}$ and a subgradient $g^k=c_k(x^k-u)$ from \Cref{lem:certificate}. For every $z\in\dom f$,
\begin{equation}\label{eq:identity}
 \norm{x^k-z}^2-\norm{u-z}^2
 =t_k^2+\frac{2}{c_k}\bigl[f(u)-f(z)+D_f^{g^k}(z,u)\bigr].
\end{equation}
The objective decrease satisfies
\begin{equation}\label{eq:drop}
 f(x^k)-f(u)=t_k\norm{g^k}+D_f^{g^k}(x^k,u).
\end{equation}
In particular, whenever $f(z)\le f(u)$,
\begin{equation}\label{eq:lower-fejer}
 \norm{u-z}^2\le\norm{x^k-z}^2-t_k^2.
\end{equation}
\end{theorem}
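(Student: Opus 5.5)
The plan is a direct algebraic verification from the radial certificate in \Cref{lem:certificate}; no further inequality is needed beyond nonnegativity of the Bregman term. Write $x=x^k$, $t=t_k$, $c=c_k$, $g=g^k$. By \eqref{eq:radial} we have $x-u=g/c$ and $\norm{x-u}=t$, so in particular $t\norm g=c t^2=\ip g{x-u}$.

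For \eqref{eq:identity}, expand the difference of squares around $u$:
\[
\norm{x-z}^2-\norm{u-z}^2=\norm{x-u}^2+2\ip{x-u}{u-z}=t^2+\frac2c\ip{g}{u-z}.
\]
The definition \eqref{eq:bregman-definition} gives $\ip g{u-z}=-\ip g{z-u}=f(u)-f(z)+D_f^{g}(z,u)$ for $z\in\dom f$. Substituting yields \eqref{eq:identity} directly.

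For \eqref{eq:drop}, take $z=x\in\dom f$ in \eqref{eq:bregman-definition}:
\[
f(x)-f(u)=\ip g{x-u}+D_f^g(x,u)=t\norm g+D_f^g(x,u).
\]
This uses $\ip g{x-u}=c\norm{x-u}^2=ct^2=t\norm g$, which holds because $g$ is a positive multiple of $x-u$ and $\norm g=ct$.

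For \eqref{eq:lower-fejer}, observe that if $f(z)\le f(u)$, then $z\in\dom f$ automatically, since $f(u)<\infty$. In this case both $f(u)-f(z)\ge0$ and $D_f^g(z,u)\ge0$ by the subgradient inequality, and $c>0$. So the bracket in \eqref{eq:identity} is nonnegative, and dropping it gives the claimed inequality.

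The only point needing care is the well-definedness: $u\in\dom f$ and $z\in\dom f$ are required so that no $\infty-\infty$ arises. The first holds because $u$ minimizes $f$ over a ball containing $x\in\dom f$, and the second is assumed. I do not expect a genuine obstacle. The main thing is to keep the sign conventions of $D_f^g(z,u)$ consistent so that the identity comes out exactly, not merely as an inequality.
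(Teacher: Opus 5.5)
Your proposal is correct and follows essentially the same route as the paper: you expand $x^k-z=(x^k-u)+(u-z)$, rewrite the cross term with the Bregman definition, evaluate at $z=x^k$ using $\ip{g^k}{x^k-u}=c_kt_k^2=t_k\norm{g^k}$, and drop the nonnegative bracket to get the inequality. Your check that $f(z)\le f(u)$ already forces $z\in\dom f$ is a small detail the paper leaves implicit.
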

The proof is given in \Cref{proof:foundation:2}.
 
Equation \eqref{eq:identity} retains the objective-gap and Bregman terms in the distance decrease, while \eqref{eq:drop} relates objective decrease to the radial subgradient. The lower-level estimate \eqref{eq:lower-fejer} follows by discarding these additional nonnegative terms.

\begin{example}[Slack in the squared-radius estimate]\label{ex:one-step}
Take $f(x)=|x|$, $x^0=3$, and $t=1$. The first ball is $[2,4]$, so $u=2$, $g=1$, and $c=1$. For the minimizer $z=0$, the Bregman distance vanishes, and \eqref{eq:identity} gives

\[
\underbrace{3^2-2^2}_{5}
\overset{\eqref{eq:identity}}{=}
\underbrace{1^2}_{\text{radius squared}}
+
\underbrace{2\cdot2}_{\text{objective-gap contribution}}.
\]

The squared-radius estimate \eqref{eq:lower-fejer} retains only the first term, whereas the exact identity \eqref{eq:identity} accounts for the remaining four units through the objective gap. By \eqref{eq:drop}, the objective decrease is
$3-2=t|g|=1$.
With the same fixed radius, the iterates are $3,2,1,0$, so the method reaches the minimizer in three iterations. In contrast, telescoping only the squared-radius decrease in \eqref{eq:lower-fejer} gives the generic bound $D_0^2/t^2=9$. 
This example shows that the squared-radius bound can be loose on a particular instance.
\end{example}

A nonterminal step also determines a halfspace containing the lower sublevel set. 
With $n_k={(x^k-u)}/{t_k}$, the subgradient inequality gives

\begin{equation}
\label{eq:cut}
H_k:=\{z:\ip{n_k}{z-u}\le0\}
\overset{\eqref{eq:radial}}{\supseteq}\{z:f(z)\le f(u)\},
\qquad
u=\Proj_{H_k}(x^k).
\end{equation}

Thus the lower sublevel set lies in a halfspace through $u$, and $u$ is the projection of $x^k$ onto that halfspace.

Dropping the additional nonnegative terms in \eqref{eq:identity} recovers the classical squared-radius decrease.

\begin{corollary}[Squared-radius bound]\label{cor:squared}
Under \assref{ass:convex}, \assref{ass:attained}, and \assref{ass:iteration}, every nonterminal step satisfies $D_{k+1}^2\le D_k^2-t_k^2$.
Consequently, one of the first $K$ iterations is terminal whenever
\begin{equation*}
\sum\limits_{k=0}^{K-1}t_k^2\ge D_0^2.
\end{equation*}
In particular, for a constant radius $t$,
\begin{equation}\label{eq:squared-count}
N_t\le\left\lceil\frac{D_0^2}{t^2}\right\rceil.
\end{equation}
For the stopped sequence,
\begin{equation*}
D_K^2
\le
\max\left\{D_0^2-\sum\limits_{k=0}^{K-1}t_k^2,0\right\},
\end{equation*}
where the radius sequence may be extended arbitrarily after stopping.
\end{corollary}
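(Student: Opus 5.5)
The plan is to apply the lower-level estimate \eqref{eq:lower-fejer} of \Cref{thm:identity} with $z=p^{k+1}=\Proj_{\X}(x^{k+1})$. On a nonterminal step, $u=x^{k+1}$ and $f(z)=\fs\le f(u)$, so the hypothesis $f(z)\le f(u)$ of \eqref{eq:lower-fejer} holds. It is cleaner to take $z=p^k=\Proj_{\X}(x^k)$ instead. That choice gives
\[
D_{k+1}^2\le\norm{x^{k+1}-p^k}^2\le\norm{x^k-p^k}^2-t_k^2=D_k^2-t_k^2,
\]
where the first inequality holds because $p^k\in\X$ and $D_{k+1}$ is the distance to $\X$. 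This proves the one-step claim.

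For the termination statement, I would argue by contradiction. Suppose the first $K$ iterations are all nonterminal. Telescoping the one-step bound over $k=0,\dots,K-1$ gives
\[
0\le D_K^2\le D_0^2-\sum_{k=0}^{K-1}t_k^2.
\]
Under the hypothesis $\sum t_k^2\ge D_0^2$, this forces $D_K=0$ exactly. To get a contradiction, I would use the characterization of nonterminal steps from the start of \Cref{sec:foundation}: iteration $k$ is nonterminal exactly when $D_k>t_k$. Apply this at $k=K-1$, which is nonterminal. The one-step bound gives $D_{K-1}^2\ge D_K^2+t_{K-1}^2$. Carrying the telescoping only up to index $K-1$ gives
\[
D_{K-1}^2\le D_0^2-\sum_{k=0}^{K-2}t_k^2\le t_{K-1}^2,
\]
where the second inequality uses the summability hypothesis. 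So $D_{K-1}\le t_{K-1}$, which says iteration $K-1$ is terminal, contradicting the assumption. The case $K=1$ fits the same pattern: $D_0\le t_0$ means the first step is terminal. The constant-radius bound \eqref{eq:squared-count} then follows by taking $K=\lceil D_0^2/t^2\rceil$, for which $Kt^2\ge D_0^2$, so that $N_t\le K$. The case $D_0=0$ gives $N_t=0$ by the convention in \assref{ass:iteration}.

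For the stopped-sequence bound, I would split on whether a terminal step occurs before index $K$. If none occurs, telescoping gives $D_K^2\le D_0^2-\sum_{k<K}t_k^2$, and this is at most the stated maximum. If a terminal step occurs at some index $j<K$, then $x^{j+1}\in\X$ by \Cref{lem:certificate}, and the sequence is held fixed afterwards. Hence $D_K=0$, which is again at most the maximum regardless of how the radii are extended.

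There is no real obstacle here. The only point needing care is the choice of the comparison point $z=p^k$, which avoids a circular use of $D_{k+1}$. The other point is the boundary case in the contradiction argument, where the characterization "terminal iff $D_k\le t_k$" must be applied to the last step rather than relying on strict inequality from the sum.
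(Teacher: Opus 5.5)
Your proposal is correct and follows the paper's proof essentially step for step: \eqref{eq:lower-fejer} with $z=\Proj_{\X}(x^k)$ for the one-step bound, telescoping for the termination criterion, $K=\lceil D_0^2/t^2\rceil$ for the constant-radius count, and the same case split for the stopped sequence. The only difference is in the contradiction step, where the paper argues more directly. The output of a nonterminal step is by definition not a minimizer, so $D_K>0$, and your derived $D_K=0$ already contradicts nonterminality; your detour through $D_{K-1}\le t_{K-1}$ is valid but unnecessary. Also, your discarded choice $z=p^{k+1}$ would not merely be less clean but would fail outright, since $\norm{x^k-p^{k+1}}\ge D_k$.
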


The proof is given in \Cref{proof:foundation:3}.

\subsection{Subgradients associated with exact BPM steps}

For a common radius, the corresponding comparison of radial subgradient norms appears in \citet[Corollary E.13 and Remark E.14]{BPM}, including in the nonsmooth setting.
The following argument also allows variable radii and arbitrary radial subgradients.

\begin{proposition}[Nonincreasing norms of radial subgradients]\label{prop:certificate-monotone}
Under \assref{ass:convex} and \assref{ass:iteration}, suppose two consecutive iterations are nonterminal. Then any corresponding radial subgradients satisfy
\begin{equation}\label{eq:acute}
\ip{g^k}{\frac{g^{k+1}}{\norm{g^{k+1}}}}\ge\norm{g^{k+1}}.
\end{equation}
Consequently, $\norm{g^{k+1}}\le\norm{g^k}$, and consecutive radial subgradient directions have positive inner product. Under \assref{ass:attained}, after $K\ge1$ iterations, taking the subgradient to be zero at and after a terminal output gives
\begin{equation}\label{eq:stationarity}
\dist(0,\partial f(x^K))\le\norm{g^{K-1}}
\le\frac{f(x^0)-f(x^K)}{\sum\limits_{k=0}^{K-1}t_k}
\le\frac{\Delta_0}{\sum\limits_{k=0}^{K-1}t_k}.
\end{equation}
\end{proposition}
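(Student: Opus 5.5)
The plan is to prove \eqref{eq:acute} by monotonicity of $\partial f$ applied to the two radial subgradients, and then to obtain \eqref{eq:stationarity} by combining the resulting monotonicity of norms with the decrease identity \eqref{eq:drop}.

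For \eqref{eq:acute}, write $g^k=c_k(x^k-x^{k+1})\in\partial f(x^{k+1})$ and $g^{k+1}=c_{k+1}(x^{k+1}-x^{k+2})\in\partial f(x^{k+2})$, as provided by \Cref{lem:certificate}. Both steps are nonterminal, so $\norm{x^{k+1}-x^{k+2}}=t_{k+1}$ and $g^{k+1}\ne0$. Set $n=g^{k+1}/\norm{g^{k+1}}=(x^{k+1}-x^{k+2})/t_{k+1}$. Monotonicity of $\partial f$ gives
\[
\ip{g^k-g^{k+1}}{x^{k+1}-x^{k+2}}\ge0 .
\]
Dividing by $t_{k+1}>0$ yields $\ip{g^k}{n}\ge\ip{g^{k+1}}{n}=\norm{g^{k+1}}$, which is \eqref{eq:acute}. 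Cauchy--Schwarz then gives $\norm{g^k}\ge\norm{g^{k+1}}$. Since the right-hand side of \eqref{eq:acute} is positive, $\ip{g^k}{g^{k+1}}>0$, so consecutive directions have positive inner product. Nothing here depends on the radii being equal, and the argument applies to any choice of radial subgradients.

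For \eqref{eq:stationarity}, I treat the first inequality in two cases. If step $K-1$ is nonterminal, then $g^{K-1}\in\partial f(x^K)$, which gives the bound directly. If a terminal output has been reached by time $K$, then $x^K\in\X$, so $0\in\partial f(x^K)$ and the left side is $0$; under the zero convention the middle term is $0$ as well.

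For the second inequality, sum \eqref{eq:drop} over the nonterminal steps $k<K$ and drop the nonnegative Bregman terms to get
\[
f(x^0)-f(x^K)\ge\sum_{k}t_k\norm{g^k}.
\]
Here the sum runs over nonterminal $k$; a terminal step only decreases $f$ further, and all later steps contribute zero. Monotonicity gives $\norm{g^k}\ge\norm{g^{K-1}}$ for every such $k$, because the nonterminal steps form an initial segment of consecutive steps. Hence $f(x^0)-f(x^K)\ge\norm{g^{K-1}}\sum_{k=0}^{K-1}t_k$ whenever step $K-1$ is nonterminal, and the bound is trivial otherwise. The last inequality follows from $f(x^K)\ge\fs$.

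The only delicate point is the bookkeeping around termination. The zero convention must not break the chain of inequalities: if termination occurs at step $j<K-1$, then $\norm{g^{K-1}}=0$ and the bound holds trivially. With that observed, the main content is the single monotonicity inequality, and I expect no real obstacle.
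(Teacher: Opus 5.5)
Your proposal is correct and matches the paper's proof essentially step for step. You obtain \eqref{eq:acute} from monotonicity of $\partial f$ at $x^{k+1}$ and $x^{k+2}$ together with the radial normalization $x^{k+1}-x^{k+2}=t_{k+1}g^{k+1}/\norm{g^{k+1}}$. You then sum \eqref{eq:drop}, use the resulting norm monotonicity to reach \eqref{eq:stationarity}, and dispose of the terminal case through the zero convention, just as the paper does.
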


The proof is given in \Cref{proof:foundation:4}. 
The subgradient bound will be used below to derive stationarity estimates and adaptive radius rules.

\begin{corollary}[Monotonicity of the minimum subgradient norm]\label{prop:minimum-subgradient}
Under \assref{ass:convex}, for $x\in\dom\partial f$ and any exact output $u$, $m(u)\le m(x)$, where $m(x)=\dist(0,\partial f(x))$.
\end{corollary}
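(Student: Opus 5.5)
Given $x\in\dom\partial f$, $t>0$ and an exact output $u\in\brox_f^t(x)$, we must show $m(u)\le m(x)$, where $m(x)=\dist(0,\partial f(x))$. We split according to whether the step is terminal.

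\emph{Terminal case.} Here $u$ is a global minimizer, so $0\in\partial f(u)$ and $m(u)=0\le m(x)$.

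\emph{Nonterminal case.} By \Cref{lem:certificate}, $u$ is unique, $\norm{x-u}=t$, and there is a radial subgradient $g=c(x-u)\in\partial f(u)$ with $c>0$. Let $p$ be the minimum-norm element of $\partial f(x)$; it exists because $\partial f(x)$ is nonempty, closed and convex. Since $m(u)\le\norm g$, it suffices to show $\norm g\le\norm p$.

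The main step is to compare the two subgradients through monotonicity of $\partial f$, mirroring the argument behind \eqref{eq:acute}. Monotonicity at the pair $(x,p)$, $(u,g)$ gives
\[
\ip{p-g}{x-u}\ge0.
\]
Substituting $x-u=g/c$ yields $\ip{p}{g}\ge\norm g^2$. By Cauchy--Schwarz, $\norm p\,\norm g\ge\norm g^2$. Since $g\neq0$, dividing gives $\norm g\le\norm p$, hence $m(u)\le\norm g\le\norm p=m(x)$.

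An alternative route would treat $x$ as if it were a preceding iterate and apply \Cref{prop:certificate-monotone} directly. However, $p$ need not be radial for any earlier step, so that proposition does not apply as stated; the direct monotonicity computation above avoids this and is equally short.

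The only point needing care is the terminal/nonterminal split together with the hypothesis $x\in\dom\partial f$, which guarantees that $m(x)$ is finite and that $p$ exists. Beyond that, no step should be an obstacle.
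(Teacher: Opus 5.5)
Your proof is correct and follows the paper's argument: the terminal case gives $m(u)=0$, and in the nonterminal case monotonicity of $\partial f$ between $(x,p)$ and $(u,g)$, combined with Cauchy--Schwarz, gives $\norm{g}\le\norm{p}$. The only cosmetic difference is that you fix the minimum-norm element of $\partial f(x)$, whereas the paper takes an arbitrary $p\in\partial f(x)$ and then passes to the infimum.
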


The proof is given in \Cref{proof:foundation:5}.

\section{Sharper objective-gap bounds for BPM}
\label{sec:values}

Throughout this section \assref{ass:convex}, \assref{ass:attained}, and \assref{ass:iteration} hold.
We first study the objective gap
\[
\Delta_k=f(x^k)-f_\star.
\]
The analysis proceeds in two stages. 
We begin with a one-step contraction obtained by considering a feasible point on the segment from $x^k$ to its projection $p^k=\Proj_{\X}(x^k)$ onto the solution set.
We then retain the actual objective decrease in each step and relate it to the decrease in squared distance, which leads to stronger multi-step bounds. 
In the second part of the section, we use these objective-gap estimates to control stationarity and a symmetric Bregman distance to a minimizer.

\subsection{Objective-gap contraction and squared-distance decrease}

\noindent
The following result uses the segment argument of \citet[Appendix A, Theorem 26]{CarmonEtAlBallOracle}, stated here under our assumptions and for variable radii.

\begin{theorem}[Segment contraction]\label{thm:segment}
If $D_k>0$, then
\begin{equation}\label{eq:segment}
 \Delta_{k+1}\le\left(1-\min\left\{\frac{t_k}{D_k},1\right\}\right)\Delta_k.
\end{equation}
If $D_k=0$, both gaps are zero. For $0<t<D_0$ and constant radius, the stopped sequence satisfies
\begin{equation}\label{eq:frozen}
 \Delta_K\le\left(1-\frac{t}{D_0}\right)^K\Delta_0.
\end{equation}
\end{theorem}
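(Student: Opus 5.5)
The plan is the standard segment argument: compare the exact step with an explicit feasible point on the segment toward the nearest minimizer, and use convexity along that segment.

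\textbf{One-step bound.} Fix $k$ with $D_k>0$, let $p^k=\Proj_{\X}(x^k)$, and set $\theta=\min\{t_k/D_k,1\}\in(0,1]$. Consider the point
\[
y=(1-\theta)x^k+\theta p^k .
\]
Its distance from the center is $\norm{y-x^k}=\theta D_k\le t_k$, so $y\in\B(x^k,t_k)$. Since $x^k,p^k\in\dom f$, convexity gives $y\in\dom f$ and
\[
f(y)\le(1-\theta)f(x^k)+\theta\fs .
\]
The exact oracle output $x^{k+1}$ minimizes $f$ over the ball, so $f(x^{k+1})\le f(y)$. Subtracting $\fs$ gives $\Delta_{k+1}\le(1-\theta)\Delta_k$, which is \eqref{eq:segment}.

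\textbf{Degenerate case.} If $D_k=0$, then $x^k\in\X$, since $\X$ is closed by \assref{ass:convex}. Hence $\Delta_k=0$. By the stopping convention in \assref{ass:iteration}, either the sequence is held fixed, or any exact step outputs a point of value at most $f(x^k)=\fs$. In both cases $\Delta_{k+1}=0$. The same reasoning covers every index after a terminal step.

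\textbf{Multi-step bound for a constant radius $t<D_0$.} The task is to iterate \eqref{eq:segment} with a uniform factor. By \Cref{cor:squared}, every nonterminal step satisfies $D_{k+1}^2\le D_k^2-t^2$, so $D_{k+1}\le D_k$. Terminal steps give $D_{k+1}=0$, and the stopped sequence then stays at zero. Therefore $D_k\le D_0$ for all $k$ by induction.

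Now consider an index with $D_k>0$. Since $t<D_0$, we have $t/D_k\ge t/D_0$ and $t/D_0<1$, which gives
\[
\min\{t/D_k,1\}\ge t/D_0 .
\]
So each such step contracts the gap by at least the factor $1-t/D_0$. Indices with $D_k=0$ have zero gap and satisfy the bound trivially. Chaining the one-step inequalities from $k=0$ to $K-1$ yields \eqref{eq:frozen}.

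\textbf{Main obstacle.} The only delicate point is the bookkeeping needed to pass to a uniform factor: $D_k$ must be nonincreasing along the stopped sequence, including across the terminal step and under the freezing convention. The squared-radius corollary handles exactly this.
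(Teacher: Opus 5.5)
Your proposal is correct and essentially matches the paper's proof. Both use the feasible point $(1-\theta)x^k+\theta p^k$ on the segment toward $\Proj_{\X}(x^k)$, combine convexity with exact minimization for the one-step bound, and use $D_k\le D_0$ along the stopped sequence to pass to the uniform factor $1-t/D_0$; you justify that monotonicity explicitly via \Cref{cor:squared}, whereas the paper only asserts it before termination.
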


The proof is given in \Cref{proof:value_rates:1}.
For comparison, \citet[Corollary 8.2]{BPM} gives the fixed-radius factor $(1+t/D_0)^{-K}$.
Since $1-a\le(1+a)^{-1}$ for $0<a<1$, \eqref{eq:frozen} improves this original geometric factor.
We therefore use $(1-t/D_0)^K$ as the reference geometric bound below.

Multiplying \eqref{eq:segment} along a sequence of nonterminal steps gives

\begin{equation}
\label{eq:segment-product}
 \Delta_K\overset{\eqref{eq:segment}}{\le}\Delta_0\prod_{k<K}\left(1-\frac{t_k}{D_k}\right).
\end{equation}

For constant radius, \Cref{cor:squared} further gives

\[
 \Delta_K\overset{\eqref{eq:segment-product},\eqref{eq:lower-fejer}}{\le}\Delta_0\prod_{k<K}
 \left(1-\frac{t}{\sqrt{D_0^2-kt^2}}\right).
\]

If the first $K$ iterations are nonterminal, then $D_k>t$ for every $k<K$, so all factors are positive. If a terminal step occurs earlier, then $\Delta_K=0$.

The same one step geometry also yields a direct relation between objective suboptimality and distance to the solution set.
The proof is given in \Cref{proof:value_rates:4}.

\begin{corollary}[Objective gap--distance comparison]\label{prop:gap-distance}
If $D_0>0$, then for arbitrary positive radii,
\begin{equation}\label{eq:gap-distance}
 \Delta_K\le\frac{D_K}{D_0}\Delta_0.
\end{equation}
\end{corollary}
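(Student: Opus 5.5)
The plan is to show that the ratio $\Delta_k/D_k$ is nonincreasing along nonterminal steps and then telescope. Concretely, I will prove that for every nonterminal step $k$ with $D_{k+1}>0$,
\[
\frac{\Delta_{k+1}}{D_{k+1}}\le\frac{\Delta_k}{D_k}.
\]
Chaining this from $k=0$ to $K-1$ gives $\Delta_K/D_K\le\Delta_0/D_0$, which is \eqref{eq:gap-distance} after multiplying by $D_K$.

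The one-step inequality combines two facts.

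\begin{itemize}[leftmargin=*]
\item \emph{Objective contraction.} On a nonterminal step we have $D_k>t_k$, so \Cref{thm:segment} gives $\Delta_{k+1}\le(1-t_k/D_k)\Delta_k=\frac{D_k-t_k}{D_k}\Delta_k$.
\item \emph{Distance lower bound.} By \Cref{lem:certificate}, a nonterminal step moves exactly distance $t_k$, that is, $\norm{x^{k+1}-x^k}=t_k$. Since $\dist(\cdot,\X)$ is $1$-Lipschitz, $D_{k+1}\ge D_k-t_k>0$.
\end{itemize}

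Dividing the first bound by $D_{k+1}$ and using $D_{k+1}\ge D_k-t_k>0$ gives
\[
\frac{\Delta_{k+1}}{D_{k+1}}\le\frac{D_k-t_k}{D_kD_{k+1}}\Delta_k\le\frac{\Delta_k}{D_k}.
\]
This step is the one to check carefully, and it relies on the positivity of $D_k-t_k$, which holds precisely because the step is nonterminal. It is a little surprising that the crude triangle-inequality lower bound on $D_{k+1}$ is exactly what is needed. The cruder Fejér bound $D_{k+1}^2\le D_k^2-t_k^2$ from \Cref{cor:squared} points the wrong way here and is not used.

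Next come the degenerate cases. Suppose some step among the first $K$ is terminal. Then by \Cref{ass:iteration} the stopped sequence satisfies $x^K\in\X$, so $\Delta_K=0$ and \eqref{eq:gap-distance} holds trivially because its right-hand side is nonnegative. Otherwise all $K$ steps are nonterminal, so $D_k>t_k>0$ for each $k<K$, and $D_K>0$ by the distance lower bound applied at step $K-1$. The case $K=0$ is immediate. With these cases handled, the telescoping argument applies and the corollary follows for arbitrary positive radii.
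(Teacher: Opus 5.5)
Your proof is correct. It reaches the same per-step inequality $\Delta_{k+1}/\Delta_k\le D_{k+1}/D_k$ as the paper before telescoping, but by a different route.

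The paper does not invoke \Cref{thm:segment}. Instead it uses the radial subgradient $g^k\in\partial f(x^{k+1})$ twice. The subgradient inequality toward $\Proj_{\X}(x^{k+1})$ gives $\Delta_{k+1}\le\norm{g^k}D_{k+1}$, and the decrease identity \eqref{eq:drop} gives $\Delta_k-\Delta_{k+1}\ge t_k\norm{g^k}$. Eliminating $\norm{g^k}$ yields $\Delta_{k+1}/\Delta_k\le D_{k+1}/(D_{k+1}+t_k)$, and the same triangle inequality $D_k\le D_{k+1}+t_k$ then finishes.

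Your route needs no subgradients at all. Segment contraction uses only feasibility of a point on the segment to $\Proj_{\X}(x^k)$. Your lower bound $D_{k+1}\ge D_k-t_k$ uses only $\norm{x^{k+1}-x^k}\le t_k$, not the full-radius property from \Cref{lem:certificate}. Hence your argument extends to any (e.g.\ inexact) step that stays in the ball and does at least as well as the segment point.

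Your intermediate bound is also at least as strong as the paper's. Cross-multiplying shows that $(D_k-t_k)/D_k\le D_{k+1}/(D_{k+1}+t_k)$ is again equivalent to $D_k\le D_{k+1}+t_k$. What the paper's route offers instead is the a posteriori relation $\Delta_{k+1}\le\norm{g^k}D_{k+1}$. This ties the gap to the radial subgradient and the next distance, in the same form as the stationarity and symmetric Bregman bounds developed later.
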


The segment bound in \eqref{eq:segment} can be exact: for $f(x)=|x|$ with $x>t>0$, it is attained with equality. It also applies to variable radii and does not require access to radial subgradients. However, it tracks objective decrease and distance decrease only through the current distance $D_k$. The next result retains the stronger coupling between these two quantities.

\begin{theorem}[Refined distance decrease from objective progress]\label{thm:refined}
At any nonterminal step,
\begin{align}
D_{k+1}^2
&\le D_k^2-t_k^2-\frac{2\Delta_{k+1}}{c_k},
\label{eq:refined-c}\\
D_{k+1}^2
&\le D_k^2
-t_k^2\frac{\Delta_k+\Delta_{k+1}}
{\Delta_k-\Delta_{k+1}}.
\label{eq:refined-ratio}
\end{align}
In particular, the denominator is positive and the multiplier of $t_k^2$ in \eqref{eq:refined-ratio} exceeds one.
\end{theorem}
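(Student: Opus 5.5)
Both inequalities come from the exact progress identity in \Cref{thm:identity}, applied with $z=p^k=\Proj_{\X}(x^k)$.

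\textbf{Proof of \eqref{eq:refined-c}.} Since $p^k$ is a minimizer, $f(p^k)=\fs$, so $f(u)-f(p^k)=\Delta_{k+1}$. Also $D_f^{g^k}(p^k,u)\ge0$. Then \eqref{eq:identity} gives
\[
\norm{x^k-p^k}^2-\norm{x^{k+1}-p^k}^2\ge t_k^2+\frac{2\Delta_{k+1}}{c_k}.
\]
We have $\norm{x^k-p^k}=D_k$, and $D_{k+1}\le\norm{x^{k+1}-p^k}$ because $p^k\in\X$. Together these yield \eqref{eq:refined-c}.

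\textbf{Proof of \eqref{eq:refined-ratio}.} The task is to eliminate $c_k$ using the decrease identity \eqref{eq:drop}. On a nonterminal step, \eqref{eq:radial} gives $\norm{g^k}=c_k t_k$. Dropping the nonnegative Bregman term in \eqref{eq:drop} yields
\[
\Delta_k-\Delta_{k+1}\ge t_k\norm{g^k}=c_k t_k^2,
\qquad\text{so}\qquad \frac{1}{c_k}\ge\frac{t_k^2}{\Delta_k-\Delta_{k+1}}.
\]
This bound points the right way: the term $-2\Delta_{k+1}/c_k$ in \eqref{eq:refined-c} becomes more negative as $1/c_k$ grows. Substituting gives
\[
D_{k+1}^2\le D_k^2-t_k^2\Bigl(1+\frac{2\Delta_{k+1}}{\Delta_k-\Delta_{k+1}}\Bigr),
\]
and the bracket simplifies to $(\Delta_k+\Delta_{k+1})/(\Delta_k-\Delta_{k+1})$.

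\textbf{Positivity of the denominator.} We need $\Delta_k-\Delta_{k+1}>0$. From \eqref{eq:drop}, $\Delta_k-\Delta_{k+1}\ge c_k t_k^2>0$, because $c_k>0$ and $t_k>0$. (Strict decrease also follows directly: a nonterminal step means $\X$ does not meet the ball, so by convexity the segment from $x^k$ toward $p^k$ contains a feasible point with a strictly smaller value.) We also need $f(x^k)<\infty$, which holds because $x^k\in\dom f$.

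\textbf{Multiplier exceeds one.} The multiplier equals $1+2\Delta_{k+1}/(\Delta_k-\Delta_{k+1})$. Since the step is nonterminal, $u\notin\X$, so $\Delta_{k+1}>0$ and the multiplier is strictly greater than one.

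The only delicate points are the direction of the inequality when substituting for $1/c_k$, and confirming positivity of the denominator. No real obstacle is expected.
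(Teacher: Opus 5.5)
Your proposal is correct and follows essentially the same route as the paper's proof. You apply the progress identity at $z=p^k$, drop the Bregman term, and use $D_{k+1}\le\norm{x^{k+1}-p^k}$; then you eliminate $c_k$ via $\Delta_k-\Delta_{k+1}\ge c_kt_k^2>0$ from \eqref{eq:drop}, and use $\Delta_{k+1}>0$ for the multiplier.
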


The proof is given in \Cref{proof:value_rates:2}.
Equation \eqref{eq:refined-c} is the distance refinement in \citet[equation (23)]{BPM}. 
Eliminating the multiplier with their objective-decrease estimate gives \eqref{eq:refined-ratio}.
To make this relation explicit, define

\begin{equation}
\label{eq:ratio-definitions}
q_k\coloneq\frac{\Delta_{k+1}}{\Delta_k}, \qquad \beta_k\coloneq\frac{D_k^2-D_{k+1}^2}{t_k^2}.
\end{equation}

Here, $q_k$ is the fraction of the objective gap remaining after the $k$-th step, while $\beta_k$ measures the corresponding decrease in squared distance to the solution set, normalized by $t_k^2$.
The refined inequality \eqref{eq:refined-ratio} then gives

\begin{equation}
\label{eq:ratio-lower}
\beta_k\overset{\eqref{eq:refined-ratio},\eqref{eq:ratio-definitions}}{\ge}\frac{1+q_k}{1-q_k}.
\end{equation}

The above inequality shows that a small relative decrease in the objective gap must be accompanied by a large decrease in squared distance to the solution set. Indeed, if $q_k$ is close to $1$, then $(1+q_k)/(1-q_k)$ is large, hence $\beta_k$ must also be large. Since the total decrease in squared distance is limited, such behavior cannot persist over many iterations with the same radius. We make this observation quantitative in the next subsection.

On the other hand, $\beta_k$ cannot be arbitrarily large. At a nonterminal step, $\norm{x^{k+1}-x^k}=t_k$, and the triangle inequality gives

\begin{equation}
\label{eq:distance-triangle}
D_{k+1}
=\dist(x^{k+1},\X)
\ge D_k-\norm{x^{k+1}-x^k}
\overset{\eqref{eq:radial}}{=}D_k-t_k.
\end{equation}

Therefore,

\begin{equation}
\label{eq:ratio-upper}
\beta_k
\overset{\eqref{eq:ratio-definitions}}{=}\frac{D_k^2-D_{k+1}^2}{t_k^2}
\overset{\eqref{eq:distance-triangle}}{\le}
\frac{D_k^2-(D_k-t_k)^2}{t_k^2}
=
\frac{2D_k}{t_k}-1.
\end{equation}

Combining the lower and upper bounds on $\beta_k$ yields

\begin{equation}
\label{eq:ratio-sandwich}
\frac{1+q_k}{1-q_k}
\overset{\eqref{eq:ratio-lower},\eqref{eq:ratio-upper}}{\le}
\frac{2D_k}{t_k}-1,
\end{equation}

which is equivalent to

\[
q_k\overset{\eqref{eq:ratio-sandwich}}{\le} 1-\frac{t_k}{D_k}.
\]

Thus the refined distance inequality recovers the segment contraction, while retaining additional information about how objective-gap decrease is coupled to decrease in squared distance.

\subsection{Refined multi-step objective-gap bounds}

We now aggregate the relation between objective-gap decrease and squared-distance decrease across multiple iterations.
Suppose the radius is constant, $t_k=t$, and define $
S\coloneq {D_0^2}/{t^2}$.
For the first $K$ iterations, all assumed nonterminal,

\begin{equation}
\label{eq:beta-budget}
\sum\limits_{k=0}^{K-1}\beta_k
\overset{\eqref{eq:ratio-definitions}}{=}
\frac{1}{t^2}\sum\limits_{k=0}^{K-1}\left(D_k^2-D_{k+1}^2\right)
=
\frac{D_0^2-D_K^2}{t^2}
<S.
\end{equation}

Thus the quantities $\beta_k$ share a finite total budget.
Moreover, the inequality
$
\beta_k\ge{(1+q_k)}/{(1-q_k)}
$
is equivalent to
$
q_k\le{(\beta_k-1)}/{(\beta_k+1)}.
$
Since

\begin{equation}
\label{eq:gap-product}
\frac{\Delta_K}{\Delta_0}
\overset{\eqref{eq:ratio-definitions}}{=}
\prod_{k<K}q_k,
\end{equation}

we obtain

\[
\frac{\Delta_K}{\Delta_0}
\overset{\eqref{eq:gap-product},\eqref{eq:ratio-lower}}{\le}
\prod_{k<K}\frac{\beta_k-1}{\beta_k+1}.
\]

\phantomsection\label{concept:jensen}
The remaining question is therefore how large this product can be when the $\beta_k$ satisfy the total-budget constraint above. To apply Jensen's inequality, take the negative logarithm of each factor and define $\phi(b)=-\log((b-1)/(b+1))$ for $b>1$.
This function is strictly convex, since $\phi''(b)=4b/(b^2-1)^2>0$.
Writing $\bar\beta=K^{-1}\sum_{k=0}^{K-1}\beta_k$, Jensen's inequality with equal weights $1/K$ gives
\[
\frac1K\sum\limits_{k=0}^{K-1}\phi(\beta_k)
\overset{\text{Jensen}}{\ge}\phi(\bar\beta).
\]
Multiplying by $-K$ reverses the inequality; exponentiating then gives
\[
\prod_{k=0}^{K-1}\frac{\beta_k-1}{\beta_k+1}
=\exp\!\left(-\sum\limits_{k=0}^{K-1}\phi(\beta_k)\right)
\overset{\text{Jensen}}{\le}\exp\!\left(-K\phi(\bar\beta)\right)
=\left(\frac{\bar\beta-1}{\bar\beta+1}\right)^K.
\]
Equality holds when all $\beta_k$ are equal, so equal allocation gives the largest product for a fixed total.
Finally, the fraction $(b-1)/(b+1)$ increases with $b>1$, and the budget in \eqref{eq:beta-budget} gives $\bar\beta<S/K$.
Replacing $\bar\beta$ by $S/K$ therefore yields the upper bound $((S-K)/(S+K))^K$. This leads to the next theorem. 

\begin{theorem}[Jensen objective-gap bound]\label{thm:jensen}
Let $t_k=t>0$, $D_0>0$, and $S=D_0^2/t^2$. If the first $K\ge1$ iterations are nonterminal, then $K<S$ and
\begin{equation}\label{eq:jensen}
 \Delta_K\le\Delta_0\left(\frac{S-K}{S+K}\right)^K
 \le\Delta_0\exp\!\left(-\frac{2K^2}{S}\right).
\end{equation}
For $0<\varepsilon<\Delta_0$, the first index with $\Delta_K\le\varepsilon$ obeys
\begin{equation}\label{eq:value-complexity}
 N_\varepsilon\le\min\left\{
 \left\lceil S\right\rceil,
 \left\lceil\frac{D_0}{t}\sqrt{\frac12\log\frac{\Delta_0}{\varepsilon}}\right\rceil
 \right\}.
\end{equation}
\end{theorem}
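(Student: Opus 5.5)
The proof follows the aggregation argument sketched just before the statement. I will make the budget and boundary cases precise and then add the complexity count.

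\textbf{Step 1: budget and $K<S$.} Assume the first $K$ iterations are nonterminal. By \Cref{thm:refined} and \eqref{eq:ratio-lower}, each $\beta_k\ge(1+q_k)/(1-q_k)>1$, since $0\le q_k<1$. The inequality $q_k<1$ holds because the multiplier in \eqref{eq:refined-ratio} has positive denominator. Also $q_k>0$, because $\Delta_{k+1}>0$ on a nonterminal step: the output is not a minimizer. Telescoping gives \eqref{eq:beta-budget}, namely $\sum_{k<K}\beta_k=(D_0^2-D_K^2)/t^2$. Here $D_K>0$, since $x^K$ is a nonterminal output and hence not optimal, so the sum is strictly less than $S$. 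Combining $\sum_{k<K}\beta_k>K$ with $\sum_{k<K}\beta_k<S$ gives $K<S$. Equivalently, $\bar\beta\in(1,S/K)$.

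\textbf{Step 2: Jensen bound.} From \eqref{eq:gap-product} and $q_k\le(\beta_k-1)/(\beta_k+1)$ we get $\Delta_K/\Delta_0\le\prod_{k<K}(\beta_k-1)/(\beta_k+1)$. Convexity of $\phi(b)=-\log((b-1)/(b+1))$ on $(1,\infty)$ then bounds this product by $((\bar\beta-1)/(\bar\beta+1))^K$. Because this expression is increasing in $\bar\beta$, and $\bar\beta<S/K$, the product is at most $((S-K)/(S+K))^K$.

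\textbf{Step 3: exponential form.} With $a=K/S\in(0,1)$, the claim reduces to $\log\frac{1-a}{1+a}\le-2a$. This follows from the series $-\log\frac{1-a}{1+a}=2(a+a^3/3+\dots)\ge2a$. Raising to the power $K$ gives $\exp(-2aK)=\exp(-2K^2/S)$, which is \eqref{eq:jensen}.

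\textbf{Step 4: complexity \eqref{eq:value-complexity}.} By \Cref{cor:squared}, a terminal step occurs within $\lceil S\rceil$ iterations, after which $\Delta=0\le\varepsilon$. Hence $N_\varepsilon\le\lceil S\rceil$.

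For the second bound, let $K^*=\lceil (D_0/t)\sqrt{\tfrac12\log(\Delta_0/\varepsilon)}\rceil$, so that $2K^{*2}/S\ge\log(\Delta_0/\varepsilon)$. There are two cases.
\begin{itemize}[leftmargin=*]
\item If the first $K^*$ iterations are all nonterminal, then \eqref{eq:jensen} gives $\Delta_{K^*}\le\varepsilon$.
\item Otherwise a terminal step occurred earlier, so $\Delta_{K^*}=0$.
\end{itemize}
Either way $N_\varepsilon\le K^*$. Since $\varepsilon<\Delta_0$, we have $K^*\ge1$, so the theorem applies.

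The main obstacle is the careful handling of the strictness and boundary cases. We need $\beta_k>1$ so that $\phi$ is defined, $q_k>0$ so that logarithms make sense, and the budget inequality $\sum_{k<K}\beta_k<S$ to be strict. All of these rest on the fact that nonterminal outputs are nonoptimal, which is what makes $\Delta_{k+1}>0$ and $D_K>0$.
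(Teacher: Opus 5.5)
Your proof is correct and follows the paper's argument essentially step for step: the same strict budget $\sum_{k<K}\beta_k=(D_0^2-D_K^2)/t^2<S$ using $D_K>0$, Jensen applied to the convex function $-\log((b-1)/(b+1))$ (the paper phrases this via concavity of $\log((b-1)/(b+1))$), the inequality $\log((1-v)/(1+v))\le-2v$, and \Cref{cor:squared} for the $\lceil S\rceil$ term. Your complexity argument is slightly leaner than the paper's. Nonterminality of the first $K^*$ steps is exactly the hypothesis of \eqref{eq:jensen}, and that bound itself yields $K^*<S$, so the paper's preliminary split on whether $\widehat K\ge\lceil S\rceil$ is unnecessary.
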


The proof is given in \Cref{proof:value_rates:3}.
For the first step in \Cref{ex:one-step}, the segment bound is exact, giving $\Delta_1\le 2$, whereas the Jensen bound gives $\Delta_1\le 2.4$. 
Thus the Jensen bound is not necessarily sharper at short horizons, its advantage comes from aggregating information across multiple iterations.

The above Jensen bound and the geometric bound based on the initial distance capture different parts of the preceding analysis, and neither is uniformly stronger. 
For $a=t/D_0\in(0,1)$, a direct comparison gives
\[
\left(\frac{S-K}{S+K}\right)^K\le (1-a)^K
\quad\Longleftrightarrow\quad
K\ge \frac{1}{a(2-a)}.
\]
Thus the geometric bound based on the initial distance can be sharper over short horizons, as in \Cref{ex:one-step}, whereas the Jensen bound becomes sharper once sufficiently many iterations are aggregated. 
The factor $((S-K)/(S+K))^K$ cannot be improved using only the constraints $\beta_k>1$, $\sum_{k=0}^{K-1}\beta_k<S$, and $q_k\le(\beta_k-1)/(\beta_k+1)$. To see this, take all $\beta_k$ equal to a number $b$ with $1<b<S/K$, and set $q_k=(b-1)/(b+1)$. As $b$ increases toward $S/K$, the product of the $q_k$ approaches the stated factor arbitrarily closely. Equality is excluded by the strict budget constraint, since it would require $\sum_{k=0}^{K-1}\beta_k=S$.
An actual trajectory satisfies additional constraints, however, and these give a strict improvement for fixed $S$ and $K$.

\begin{remark}[Using the final distance]\label{rem:jensen-final-distance}
The Jensen bound in \Cref{thm:jensen} uses only $\sum_{k<K}\beta_k<S$ and ignores the remaining distance $D_K$.
Let $r=\Delta_K/\Delta_0\in(0,1)$.
By \eqref{eq:gap-distance}, $D_K\ge rD_0$, so

\begin{equation}
\label{eq:final-budget}
\sum\limits_{k=0}^{K-1}\beta_k\overset{\eqref{eq:beta-budget}}{=}\frac{D_0^2-D_K^2}{t^2}\overset{\eqref{eq:gap-distance}}{\le} S(1-r^2).
\end{equation}

Applying the same Jensen argument with this smaller budget gives

\begin{equation}
\label{eq:jensen-final-distance}
r\overset{\eqref{eq:final-budget}}{\le}\left(\frac{S(1-r^2)-K}{S(1-r^2)+K}\right)^K,
\qquad
r^2+\frac KS\frac{1+r^{1/K}}{1-r^{1/K}}\le1.
\end{equation}

Since each $\beta_k>1$, we have $S(1-r^2)>K$.
The left-hand side of the second inequality is strictly increasing on $[0,1)$, equals $K/S<1$ at $r=0$, and tends to infinity as $r\to1$.
Hence there is a unique $r_\star\in(0,1)$ at which it equals $1$, and \eqref{eq:jensen-final-distance} implies $r\le r_\star$.
Moreover, $r_\star<B=((S-K)/(S+K))^K$, since substituting $B$ gives $1+B^2>1$.
Thus \eqref{eq:jensen} is not the exact worst-case factor for \BPM{} at fixed $S$ and $K$, although its explicit form remains useful. For $K=1$, the root equation simplifies to $(1-r_\star)^2=1/S$, so $r_\star=1-t/D_0$ recovers the segment bound. Sharpness of the implicit bound for general $K$ remains open.
\end{remark}

\subsection{From objective decrease to stationarity}
\label{sec:stationarity}

Each nonterminal step provides a stationarity certificate at its returned point. 
Indeed, the implicit formulation \eqref{eq:view-implicit} gives $g^k\in\partial f(x^{k+1})$,
and hence $m(x^{k+1})\le\norm{g^k}$.
Moreover, the objective decrease identity \eqref{eq:drop} in \Cref{thm:identity} gives

\[
\Delta_k-\Delta_{k+1} \overset{\eqref{eq:drop}}{=} t_k\norm{g^k} + D_f^{g^k}(x^k,x^{k+1}) \overset{\eqref{eq:bregman-definition}}{\ge} t_k\norm{g^k}.
\]

Thus a bound on the preceding objective gap can be transferred directly to a bound on stationarity at the returned point.

\begin{corollary}[Stationarity from objective decrease]\label{cor:geometric-stationarity}
Under \assref{ass:convex}, \assref{ass:attained}, and \assref{ass:iteration}, for every $K\ge1$ and arbitrary positive radii,
\begin{equation}\label{eq:local-stationarity}
 m(x^K)
 \le
 \frac{\Delta_{K-1}-\Delta_K}{t_{K-1}}
 \le
 \frac{\Delta_{K-1}}{t_{K-1}}.
\end{equation}
For a constant radius $0<t<D_0$, combining this with \eqref{eq:frozen} gives
\begin{equation}\label{eq:geometric-stationarity}
 m(x^K)
 \le
 \frac{\Delta_0}{t}
 \left(1-\frac{t}{D_0}\right)^{K-1},
 \qquad K\ge1.
\end{equation}
At and after the first optimal output, $m(x^K)=0$. 
For differentiable $f$, $m(x^K)=\norm{\nabla f(x^K)}$, so \eqref{eq:geometric-stationarity} also gives a gradient-norm bound without assuming a Lipschitz gradient.
\end{corollary}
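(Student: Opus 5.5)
The plan is to treat the nonterminal and terminal cases of step $K-1$ separately. The main tools are the radial subgradient certificate from \Cref{lem:certificate} and the objective decrease identity \eqref{eq:drop}.

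\emph{Nonterminal step.} Suppose step $K-1$ is nonterminal, so $x^{K-1}\notin\X$ and $\dist(x^{K-1},\X)>t_{K-1}$. \Cref{lem:certificate} gives a radial subgradient $g^{K-1}=c_{K-1}(x^{K-1}-x^K)\in\partial f(x^K)$. Hence
\[
m(x^K)\le\norm{g^{K-1}}.
\]
By \eqref{eq:drop} and the nonnegativity of the Bregman term \eqref{eq:bregman-definition},
\[
t_{K-1}\norm{g^{K-1}}\le f(x^{K-1})-f(x^K)=\Delta_{K-1}-\Delta_K,
\]
which is the first inequality in \eqref{eq:local-stationarity}. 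The second inequality follows from $\Delta_K\ge0$.

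\emph{Terminal or stopped case.} If step $K-1$ is terminal, or $x^{K-1}$ is already optimal and the sequence is held fixed by \assref{ass:iteration}, then $x^K\in\X$. In that case $0\in\partial f(x^K)$, so $m(x^K)=0$. The right-hand side of \eqref{eq:local-stationarity} is nonnegative, because $\Delta_{K-1}\ge\Delta_K$: the objective values are nonincreasing, since $x^{K-1}$ is feasible for its own ball. This also proves the claim that $m(x^K)=0$ at and after the first optimal output.

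\emph{Constant radius.} Take $t_k=t$ with $0<t<D_0$ and $K\ge1$. Applying \eqref{eq:frozen} at index $K-1$, which holds for the stopped sequence, gives $\Delta_{K-1}\le(1-t/D_0)^{K-1}\Delta_0$. Substituting this into the second inequality of \eqref{eq:local-stationarity} yields \eqref{eq:geometric-stationarity}.

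\emph{Differentiable case.} If $f$ is differentiable at $x^K$, then $\partial f(x^K)=\{\nabla f(x^K)\}$. Hence $m(x^K)=\norm{\nabla f(x^K)}$, and \eqref{eq:geometric-stationarity} is a gradient-norm bound with no Lipschitz-gradient assumption.

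No step is a serious obstacle. The only point requiring care is the bookkeeping in the stopped and terminal cases, so that \eqref{eq:frozen} and the monotonicity of $\Delta_k$ are invoked consistently with the convention that the sequence is held fixed once a minimizer is reached.
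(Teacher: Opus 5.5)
Your proposal is correct and follows essentially the same argument as the paper: the radial subgradient from \Cref{lem:certificate} together with the decrease identity \eqref{eq:drop} on a nonterminal step, the observation that $m(x^K)=0$ once an optimal output has occurred, and substitution of \eqref{eq:frozen} at index $K-1$ for the constant-radius bound. Your explicit check that $\Delta_{K-1}\ge\Delta_K$ in the terminal case spells out what the paper covers briefly with ``the same inequalities hold.''
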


The proof is given in \Cref{app:stationarity-transfer}.
 
This does not imply that $m(x^k)$ contracts by a uniform factor strictly below one at every iteration; the monotonicity in \Cref{prop:minimum-subgradient} still applies.

\paragraph{Refining the stationarity bound over several iterations.}

\Cref{cor:geometric-stationarity} uses only the last step. 
A stronger estimate follows by summing objective decrease over iterations $j,\ldots,K-1$. 
By the monotonicity of the radial subgradient norms in \Cref{prop:certificate-monotone}, for $j\le i<K$,

\[
\norm{g^i}\overset{\eqref{eq:acute}}{\ge}\norm{g^{K-1}}\ge m(x^K).
\]

Hence the decrease accumulated from iteration $j$ to $K$ controls the final stationarity measure through the total radius used over those iterations. 
Combining this bound over iterations $j,\ldots,K-1$ with the objective-gap bounds above yields the refined stationarity bounds below.

\begin{theorem}[Refined stationarity bounds]\label{thm:refined-stationarity}

Under \assref{ass:convex}, \assref{ass:attained}, and \assref{ass:iteration}, for arbitrary positive radii and every $K\ge1$,
\begin{equation}\label{eq:suffix-stationarity}
 m(x^K) \le \min_{0\le j<K} \frac{\Delta_j-\Delta_K}{\sum\limits_{i=j}^{K-1}t_i} \le \min_{0\le j<K} \frac{\Delta_j}{\sum\limits_{i=j}^{K-1}t_i}.
\end{equation}
For a constant radius $0<t<D_0$, set $S=D_0^2/t^2$ and suppose the first $K$ iterations are nonterminal. 
Then $K<S$ and
\begin{equation}\label{eq:jensen-stationarity}
 m(x^K) \le \frac{\Delta_0}{t} \left(\frac{S-K+1}{S+K-1}\right)^{K-1} \le \frac{\Delta_0}{t} \exp\left(-\frac{2(K-1)^2}{S}\right).
\end{equation}

For integers $0\le j<S$, define\begin{equation}\label{eq:stationarity-value-envelope}
 E_j \coloneq
 \min\left\{
  \left(1-\frac{t}{D_0}\right)^j,
  \left(\frac{S-j}{S+j}\right)^j
 \right\},
\end{equation}

Then
\begin{equation}\label{eq:optimized-stationarity}
 m(x^K)
 \le
 \frac{\Delta_0}{t}
 \min_{0\le j<K}
 \frac{E_j}{K-j}.
\end{equation}
When $f$ is differentiable, the same bounds apply to $\norm{\nabla f(x^K)}$.
\end{theorem}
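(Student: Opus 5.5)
The plan is to prove the suffix bound \eqref{eq:suffix-stationarity} first and then obtain the constant-radius bounds by plugging in the earlier objective-gap estimates.

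\textbf{Suffix bound.} Fix $0\le j<K$.

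If some step among $j,\ldots,K-1$ is terminal, then $x^K\in\X$. Hence $m(x^K)=0$ and the bound holds trivially.

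Otherwise all these steps are nonterminal. The drop identity \eqref{eq:drop} gives $\Delta_i-\Delta_{i+1}\ge t_i\norm{g^i}$ for each $j\le i<K$. By \Cref{prop:certificate-monotone}, $\norm{g^i}\ge\norm{g^{K-1}}$ for all such $i$. The implicit formulation gives $g^{K-1}\in\partial f(x^K)$, so $\norm{g^{K-1}}\ge m(x^K)$. Summing the drop inequalities over $i=j,\ldots,K-1$ and telescoping yields
\[
\Delta_j-\Delta_K\ge m(x^K)\sum_{i=j}^{K-1}t_i .
\]
Dividing by the radius sum and minimizing over $j$ gives the first inequality. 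The second follows from $\Delta_K\ge0$.

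\textbf{Constant radius.} Now take $t_i=t$ with all first $K$ steps nonterminal. Then $K<S$ by \Cref{thm:jensen}.

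Choosing $j=K-1$ in \eqref{eq:suffix-stationarity} gives $m(x^K)\le\Delta_{K-1}/t$. Since the first $K-1\ge0$ steps are nonterminal, \Cref{thm:jensen} applies with $K-1$ in place of $K$ (trivially when $K=1$). This gives $\Delta_{K-1}\le\Delta_0((S-K+1)/(S+K-1))^{K-1}\le\Delta_0e^{-2(K-1)^2/S}$, which is \eqref{eq:jensen-stationarity}.

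For \eqref{eq:optimized-stationarity}, take a general $j<K$. The suffix bound gives $m(x^K)\le\Delta_j/((K-j)t)$. Because $j<K<S$, both estimates on $\Delta_j$ are available: \eqref{eq:frozen} (valid since $t<D_0$) and the Jensen bound \eqref{eq:jensen} at index $j$ (valid since the first $j$ steps are nonterminal; when $j=0$ both factors equal $1$). So $\Delta_j\le\Delta_0E_j$. Minimizing over $j$ completes the bound.

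\textbf{Differentiable case and main obstacle.} If $f$ is differentiable at $x^K$, then $\partial f(x^K)=\{\nabla f(x^K)\}$, so $m(x^K)=\norm{\nabla f(x^K)}$ and all bounds transfer.

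The main care point is bookkeeping when a terminal step occurs partway through a suffix. This is handled in the general-radii part by the convention $m=0$ at and after optimal outputs. In the constant-radius part it is excluded by hypothesis, which is also what guarantees $j<S$, so that $E_j$ is well defined and the Jensen estimate applies at index $j$.
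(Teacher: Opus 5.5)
Your proposal is correct and follows essentially the same route as the paper's proof. Both arguments telescope the drop identity over iterations $j,\ldots,K-1$ and use the monotonicity of radial subgradient norms, then specialize to $j=K-1$ with the Jensen bound at horizon $K-1$, and finally bound $\Delta_j\le\Delta_0E_j$ using the geometric and Jensen estimates. Your treatment of the terminal case, the $K=1$ and $j=0$ edge cases, and the well-definedness of $E_j$ via $j<K<S$ matches the paper's bookkeeping.
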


The proof is given in \Cref{app:refined-stationarity}. 
The bound over iterations $j,\ldots,K-1$ \eqref{eq:suffix-stationarity} uses the actual gaps $\Delta_j$ and $\Delta_K$, whereas \eqref{eq:optimized-stationarity} replaces them by the objective-gap bounds derived above. 
In \eqref{eq:optimized-stationarity}, the index $j$ determines the first iteration included in the bound and is used only in the analysis, it does not affect the \BPM{} iteration. 
Choosing $j=0$ uses the full radius sum and gives $\Delta_0/(Kt)$, while choosing $j=K-1$ reduces to the single step estimate with the better of the geometric and Jensen bounds for $\Delta_{K-1}$. 
Intermediate values of $j$ trade a smaller objective gap later in the trajectory against a shorter remaining radius sum.

\paragraph{Shrinking radii.}

For variable radii, convergence of the objective gap alone need not imply convergence to stationarity. Indeed, in \eqref{eq:local-stationarity}, the denominator may shrink at the same rate as the objective decrease. 
The second schedule in \Cref{ex:radius-outcomes}, with
$f(x)=|x|$, $x^0=1$, and $t_k=2^{-k-1}$, satisfies
\[
 D_k=\Delta_k=2^{-k},
 \qquad
 m(x^k)=1
 \quad\text{for every finite }k.
\]
Thus both the distance to the solution set and the objective gap converge geometrically to zero, while the minimum subgradient norm remains constant.

\subsection{Symmetric Bregman distance to a minimizer}
\label{sec:primal-dual}

The exact progress identity \eqref{eq:identity}, evaluated at a minimizer $x^\star\in\X$, contains the term
\[
 f(u)-\fs+D_f^g(x^\star,u).
\]
We interpret this term as the symmetric Bregman distance to a minimizer and bound it using the radial subgradient estimates developed above. 
In the dual formulation of \Cref{prop:four-views}, the radial subgradient $g$ is also a dual solution associated with the exact ball subproblem. 
Since $0\in\partial f(x^\star)$, this is the symmetric Bregman distance associated with the subgradients $g$ at $u$ and $0$ at $x^\star$ \citep[Definition~2.2]{Burger2015Bregman}. It also admits the following representation in terms of the convex conjugate.

\begin{proposition}[Symmetric Bregman distance and its dual representation]\label{prop:primal-dual-gap}
Under \assref{ass:convex} and \assref{ass:attained}, fix $x^\star\in\X$, let $u\in\dom\partial f$, and choose $g\in\partial f(u)$. 
The two Bregman distances for the conjugate satisfy
\begin{align}
 D_{f^*}^{u}(0,g)
 &:=f^*(0)-f^*(g)+\ip ug=f(u)-\fs,\label{eq:pd-reverse}\\
 D_{f^*}^{x^\star}(g,0)
 &:=f^*(g)-f^*(0)-\ip{x^\star}g
 =D_f^g(x^\star,u)\ge0.\label{eq:pd-forward}
\end{align}
Define their sum, the symmetric Bregman distance to $x^\star$, by
\begin{equation}\label{eq:pd-gap}
 \mathcal G_{x^\star}(u,g)
 :=D_{f^*}^{u}(0,g)+D_{f^*}^{x^\star}(g,0)
 =\ip g{u-x^\star}.
\end{equation}
Equivalently, $\mathcal G_{x^\star}(u,g)=D_f^g(x^\star,u)+D_f^0(u,x^\star)$.
Then
\begin{equation}\label{eq:pd-sandwich}
 0\le f(u)-\fs\le\mathcal G_{x^\star}(u,g)
 \le\norm g\,\norm{u-x^\star}.
\end{equation}
For the exact stopped iteration of \assref{ass:iteration}, set
$x^\star=\Proj_{\X}(x^0)$ and $\mathcal G_K = \mathcal G_{x^\star}(x^K,g^{K-1})$, where $g^{K-1}\in\partial f(x^K)$ is the radial subgradient associated with the step from $x^{K-1}$ to $x^K$. 
At and after termination, we use the convention $g^{K-1}=0$.
For $K\ge1$ and arbitrary positive radii,
\begin{equation}\label{eq:pd-suffix}
 \mathcal G_K\le D_0\min_{0\le j<K}
 \frac{\Delta_j-\Delta_K}{\sum\limits_{i=j}^{K-1}t_i}.
\end{equation}
For a constant radius $0<t<D_0$ and the first $K$ iterations, all assumed nonterminal,
\begin{equation}\label{eq:pd-optimized}
 \mathcal G_K\le\frac{D_0\Delta_0}{t}
 \min_{0\le j<K}\frac{E_j}{K-j},
\end{equation}
where $E_j$ is defined in \eqref{eq:stationarity-value-envelope}.
At and after termination, $\mathcal G_K=0$. 
Since the two Bregman distances for the conjugate are nonnegative, each is also bounded by the right-hand sides of \eqref{eq:pd-suffix} and \eqref{eq:pd-optimized}.
\end{proposition}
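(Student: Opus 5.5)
The plan is to verify the conjugate identities via Fenchel--Young equality, derive the sandwich from the subgradient inequality and Cauchy--Schwarz, and then obtain the trajectory bounds by bounding $\norm{g^{K-1}}$ and $\norm{x^K-x^\star}$ separately.

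\textbf{Conjugate identities.} Since $x^\star$ minimizes $f$, we have $0\in\partial f(x^\star)$. Fenchel--Young equality then gives $f^*(0)=-\fs$ and $x^\star\in\partial f^*(0)$. Since $g\in\partial f(u)$, it also gives $f^*(g)=\ip gu-f(u)$ and $u\in\partial f^*(g)$.

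Substituting into the left side of \eqref{eq:pd-reverse} gives
\[
-\fs-\ip gu+f(u)+\ip ug=f(u)-\fs.
\]
For \eqref{eq:pd-forward}, the left side equals
\[
\ip gu-f(u)+\fs-\ip{x^\star}g=f(x^\star)-f(u)-\ip g{x^\star-u}=D_f^g(x^\star,u),
\]
which is nonnegative by the subgradient inequality. Adding the two expressions gives $\ip g{u-x^\star}$, which is \eqref{eq:pd-gap}. The alternative form follows because $D_f^0(u,x^\star)=f(u)-\fs$.

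\textbf{Sandwich.} In \eqref{eq:pd-sandwich}, the first inequality holds by optimality of $\fs$. The second holds because $D_f^g(x^\star,u)\ge0$. The third is Cauchy--Schwarz.

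\textbf{Trajectory bounds.} At and after termination the convention $g^{K-1}=0$ gives $\mathcal G_K=0$ immediately.

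Otherwise, apply the sandwich with $u=x^K$ and $g=g^{K-1}$. This requires two estimates.

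The first estimate controls $\norm{g^{K-1}}$. By \eqref{eq:drop}, $\Delta_i-\Delta_{i+1}\ge t_i\norm{g^i}$. By \Cref{prop:certificate-monotone}, $\norm{g^i}\ge\norm{g^{K-1}}$ for $i\le K-1$, as long as all those steps are nonterminal. Summing over $i=j,\dots,K-1$ gives
\[
\norm{g^{K-1}}\le\frac{\Delta_j-\Delta_K}{\sum_{i=j}^{K-1}t_i}.
\]
This is the same estimate that underlies \eqref{eq:suffix-stationarity}, but here it is stated for the radial subgradient rather than for $m(x^K)$.

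The second estimate controls $\norm{x^K-x^\star}$ for the fixed point $x^\star=\Proj_{\X}(x^0)$. Since $f(x^\star)\le f(x^{k+1})$, inequality \eqref{eq:lower-fejer} applied with $z=x^\star$ gives $\norm{x^{k+1}-x^\star}\le\norm{x^k-x^\star}$ at every nonterminal step. Induction then yields $\norm{x^K-x^\star}\le\norm{x^0-x^\star}=D_0$.

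Multiplying the two estimates proves \eqref{eq:pd-suffix}. The bound on the individual Bregman distances follows from their nonnegativity.

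\textbf{Constant radius.} For \eqref{eq:pd-optimized}, set $t_i=t$ in the bound on $\norm{g^{K-1}}$ and drop the $-\Delta_K$ term. This gives $\norm{g^{K-1}}\le\Delta_j/((K-j)t)$. Then bound $\Delta_j\le\Delta_0E_j$ by taking the minimum of \eqref{eq:frozen} and \eqref{eq:jensen}. The Jensen bound applies because the first $j<K$ iterations are nonterminal, and this also gives $j<K<S$, so $E_j$ is defined.

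\textbf{Main obstacle.} The delicate point is the distance factor. The definition uses the fixed minimizer $\Proj_{\X}(x^0)$ rather than $\Proj_{\X}(x^K)$, so one cannot appeal to $D_K\le D_0$. Instead one needs Fejér monotonicity with respect to that particular $x^\star$, which is exactly what \eqref{eq:lower-fejer} supplies. One must also check that the monotonicity of radial subgradient norms is invoked only over consecutive nonterminal steps.
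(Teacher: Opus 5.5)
Your proposal is correct and follows essentially the same route as the paper's proof. Both use Fenchel--Young equality for the conjugate identities and Cauchy--Schwarz for the sandwich. For the trajectory bounds, both use \eqref{eq:lower-fejer} toward the fixed point $x^\star=\Proj_{\X}(x^0)$ to get $\norm{x^K-x^\star}\le D_0$, bound $\norm{g^{K-1}}$ by summing \eqref{eq:drop} with the monotone radial-subgradient norms of \Cref{prop:certificate-monotone}, and finish the constant-radius case with $\Delta_j\le\Delta_0E_j$.
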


The proof is given in \Cref{app:primal-dual-gap}. 
The quantity $\mathcal G_{x^\star}(u,g)$ is exactly
$f(u)-\fs+D_f^g(x^\star,u)$ from the exact progress identity \eqref{eq:identity}.
For the iterate $x^K$, \eqref{eq:pd-sandwich} gives
$\mathcal G_K\le D_0\norm{g^{K-1}}$.
The proof of \Cref{thm:refined-stationarity} bounds $\norm{g^{K-1}}$ by the same quantity for iterations $j,\ldots,K-1$ appearing in \eqref{eq:suffix-stationarity}, which gives \eqref{eq:pd-suffix}. The fixed-radius bound \eqref{eq:pd-optimized} follows in the same way from \eqref{eq:optimized-stationarity}.

\paragraph{Interpretation of the symmetric Bregman distance.}
The quantity $\mathcal G_{x^\star}(u,g)$ depends on both a reference minimizer $x^\star$ and a subgradient $g$, so it is primarily an analytical measure rather than a stopping criterion based only on the returned point. 
\phantomsection\label{concept:fenchel-young}
It is also different from the Fenchel--Young gap $f(u)+f^*(g)-\ip ug$, which vanishes whenever $g\in\partial f(u)$ and therefore does not measure progress toward optimality here.

A vanishing $\mathcal G_{x^\star}(u,g)$ also does not imply that the subgradient $g$ converges to zero. 
In the shrinking-radius example of \Cref{ex:radius-outcomes}, with $f(x)=|x|$ and $x^k=2^{-k}$, the radial subgradients satisfy $g^{k-1}=1$, while $\mathcal G_0(x^k,1)=2^{-k}\to0$. 
Thus convergence of the symmetric Bregman distance should be distinguished from convergence to stationarity.

\section{Trajectory convergence and finite termination under arbitrary radii}
\label{sec:termination}

The behavior of an exact \BPM{} trajectory depends strongly on whether the total requested radius is finite or infinite. 
The following theorem gives the complete alternative for positive radii without assuming that a minimizer exists.
The rest of the section establishes the geometric argument behind finite termination and then examines what can happen when the radii are summable.

\begin{theorem}[General convergence under arbitrary radii]\label{thm:general-convergence}
Let $f:\R^d\to\R\cup\{+\infty\}$ satisfy \assref{ass:convex}, with $d\ge1$, and use the exact stopped iteration of \assref{ass:iteration} from $x^0\in\dom f$.
For any positive radius sequence $(t_k)$, exactly one of the following cases applies.

\begin{enumerate}[label=(\roman*)]
\item If $\sum_k t_k<\infty$, then the trajectory has finite total length and
\[
 x^k\to\bar x\in\dom f,\qquad f(x^k)\downarrow f(\bar x).
\]
The limit $\bar x$ need not be a minimizer.

\item If $\sum_k t_k=\infty$ and a minimizer exists, then the iteration attains a minimizer after finitely many iterations.

\item If $\sum_k t_k=\infty$ and no minimizer exists, then
\[
 f(x^k)\downarrow\inf f,\qquad \norm{x^k}\to\infty,
\]
including when $\inf f=-\infty$.
\end{enumerate}
In particular,
\begin{equation}\label{eq:general-value-convergence}
 \sum\limits_{k=0}^{\infty} t_k=\infty
 \quad\Longrightarrow\quad
 f(x^k)\downarrow\inf f.
\end{equation}
Moreover, for a prescribed positive radius sequence in any fixed finite dimension $d\ge1$, the condition $\sum_k t_k=\infty$ is necessary for $f(x^k)\downarrow\inf f$ to hold for every proper closed convex objective and every starting point in its domain.
\end{theorem}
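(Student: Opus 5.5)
The plan is to handle the three regimes with two tools from \Cref{sec:foundation}: the monotone objective values, and a self-contraction property of the trajectory. The latter will be combined with the finite-length theorem for bounded self-contracted sequences in $\R^d$ \citep{DaniilidisEtAl2015,BohmDaniilidis2020}. Throughout, $f(x^{k+1})\le f(x^k)$ because $x^k\in\B(x^k,t_k)$, and $\norm{x^{k+1}-x^k}\le t_k$ for every step.

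\emph{Case (i).} Summability gives $\sum_k\norm{x^{k+1}-x^k}\le\sum_k t_k<\infty$. Hence $(x^k)$ is Cauchy and converges to some $\bar x$. Set $r_k=\sum_{j\ge k}t_j$, so that $\norm{x^k-\bar x}\le r_k$.
\begin{itemize}[leftmargin=*]
\item \emph{Lower bound and domain.} Closedness gives $f(\bar x)\le\lim_k f(x^k)\le f(x^0)<\infty$, so $\bar x\in\dom f$.
\item \emph{Reverse inequality.} On nonterminal steps use the radial subgradient $g^k\in\partial f(x^{k+1})$ from \Cref{lem:certificate}. By \Cref{prop:certificate-monotone}, $\norm{g^k}\le\norm{g^0}$ as long as the steps remain nonterminal. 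The subgradient inequality then yields $f(\bar x)\ge f(x^{k+1})-\norm{g^0}\,r_{k+1}$, and letting $k\to\infty$ gives $f(\bar x)\ge\lim_k f(x^k)$.
\item \emph{Terminal case.} If some step is terminal, the sequence is eventually constant and the claim is trivial.
\item \emph{Nonoptimal limit.} Take $f(x)=|x|$, $x^0=1$, $t_k=2^{-k-2}$. Then $\sum_k t_k=1/2$, so the iterates stay in $[1/2,1]$ and converge to $1/2$, which is not a minimizer.
\end{itemize}

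\emph{Self-contraction (key lemma for (ii) and (iii)).} Let $C_k=\{z:f(z)\le f(x^k)\}$. These sublevel sets are nested and decreasing. By view (iv) of \Cref{prop:four-views}, or \eqref{eq:lower-fejer} on nonterminal steps, $x^{k}=\Proj_{C_{k}}(x^{k-1})$.
\begin{itemize}[leftmargin=*]
\item For every $z\in C_k$, the obtuse-angle property of projection gives $\norm{x^k-z}\le\norm{x^{k-1}-z}$.
\item Fix $l$ and $z=x^l$. Then $z\in C_l\subseteq C_k$ for all $k\le l$, so $k\mapsto\norm{x^k-x^l}$ is nonincreasing on $\{0,\dots,l\}$. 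This is exactly self-contractedness of the sequence.
\item More generally, for any $z$ with $f(z)\le\inf_k f(x^k)$, the distances $\norm{x^k-z}$ are nonincreasing in $k$, so the trajectory is bounded.
\item A bounded self-contracted sequence in $\R^d$ has finite length. On nonterminal steps that length is exactly $\sum_k t_k$, since $\norm{x^{k+1}-x^k}=t_k$ there.
\end{itemize}

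\emph{Case (ii).} If a minimizer $x^\star$ exists, take $z=x^\star$ in the last consequence above. The iterates are then bounded, hence have finite length. If no step were terminal, that length would equal $\sum_k t_k=\infty$, a contradiction. So some step is terminal.

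\emph{Case (iii).} Let $L=\lim_k f(x^k)$.
\begin{itemize}[leftmargin=*]
\item \emph{Values converge to $\inf f$.} If $L>\inf f$, pick $z$ with $f(z)<L$. Then the same argument makes the iterates bounded and of finite length, contradicting $\sum_k t_k=\infty$ (all steps are nonterminal, since no minimizer exists). Hence $L=\inf f$, which also covers $\inf f=-\infty$.
\item \emph{Escape to infinity.} Suppose instead a subsequence stays bounded. It then has a cluster point $y$, and closedness gives $f(y)\le\inf f$. Thus $y$ is a minimizer, which is impossible. Therefore $\norm{x^k}\to\infty$.
\item \emph{Consequence.} Cases (ii) and (iii) together give \eqref{eq:general-value-convergence}.
\end{itemize}

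\emph{Necessity.} Suppose $\sum_k t_k=T<\infty$ and take $f(x)=\ip{e_1}{x}$, whose infimum is $-\infty$. Then $f(x^k)\ge f(x^0)-T$ for all $k$, so $f(x^k)\not\to\inf f$.

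\emph{Main obstacle.} The delicate step is the self-contraction lemma. One must check the projection identity on terminal steps, where a point-only oracle may return a non-nearest minimizer; but the sequence is frozen afterwards, so only nonterminal steps matter. One must also invoke the finite-length theorem in the correct discrete form, with length measured along nonterminal steps.
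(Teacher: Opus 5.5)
Your proposal is correct and follows essentially the paper's route. Monotone values plus the sublevel-projection (Fej\'er) inequality give self-contraction, bounded self-contracted sequences have finite length (the paper's \Cref{lem:length}), and this contradicts $\sum_k t_k=\infty$ in cases (ii) and (iii), while case (i) is the Cauchy-plus-radial-subgradient argument of \Cref{thm:arbitrary-radii}.

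The only genuine difference is the necessity example: your linear objective $\ip{e_1}{x}$, with $\inf f=-\infty$, works just as well as the paper's $\norm{x}$ started at $(T+1)e_1$, though the paper's choice also shows failure when a minimizer exists. One small fix: in the escape-to-infinity step, when $\inf f=-\infty$, the inequality $f(y)\le\inf f$ contradicts properness rather than producing a minimizer, so say so explicitly.
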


The proof is given in \Cref{app:general-convergence}. 
The theorem gives a sharp distinction between summable and nonsummable radii. 
Nonsummable radii guarantee convergence of the objective values to $\inf f$. When a minimizer exists, they further force finite termination. 
A summable radius sequence cannot guarantee value convergence uniformly over all convex objectives and starting points, although it may still lead to asymptotic or finite convergence to a minimizer on a particular problem.

\subsection{Self-contraction and finite termination}

We now turn to the finite termination statement in \Cref{thm:general-convergence} and sharpen it quantitatively in fixed dimension.
For any nonterminal step $j$ and any later iterate $x^m$, $m>j$, the monotonicity of the objective implies that $x^m$ belongs to the sublevel set used in the projection formulation of the $j$-th step. 
Applying the corresponding projection inequality gives \eqref{eq:future-fejer}, and hence the trajectory is self-contracted. 
In finite dimension, bounded self-contracted sequences have finite total length. 
Since every nonterminal \BPM{} step satisfies $\norm{x^{k+1}-x^k}=t_k$, this length bound directly controls the total radius spent before termination.

Self-contraction and finite length are classical for proximal sequences. 
See \citet[Proposition 4.16 and Theorem 4.17]{DaniilidisEtAl2015} and \citet[Corollary 3.8 and Section 3.4]{BohmDaniilidis2020}. 
Each nonterminal brox step is also a proximal step with a positive parameter, so those results give an alternative route to the same conclusion. 
We give a direct argument in the radius notation used here.

\begin{definition}[Self-contracted sequence]\label{def:self-contracted}
A finite or infinite sequence $(z^k)$ is self-contracted if $\norm{z^j-z^m}\le\norm{z^i-z^m}$ whenever $i\le j\le m$.
\end{definition}

\begin{lemma}[Distance decrease toward later iterates]\label{lem:self-contracted}
Under \assref{ass:convex} and \assref{ass:iteration}, every finite prefix consisting of nonterminal outputs is self-contracted. More precisely, for $j<m$ in that prefix,
\begin{equation}\label{eq:future-fejer}
 \norm{x^{j+1}-x^m}^2\le\norm{x^j-x^m}^2-t_j^2.
\end{equation}
\end{lemma}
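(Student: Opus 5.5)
The plan is to apply the lower-level estimate \eqref{eq:lower-fejer} of \Cref{thm:identity} with the test point $z=x^m$. The only hypothesis to check is $f(x^m)\le f(x^{j+1})$, and monotonicity of the objective along the trajectory supplies it. Self-contraction then follows by discarding the $t_j^2$ term and chaining.

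\textbf{Step 1 (monotone values).} Along any exact BPM trajectory, $x^{k}\in\B(x^k,t_k)$ is feasible for the $k$-th ball problem. Hence $f(x^{k+1})\le f(x^k)$ for every $k$. This also holds at terminal steps and under the stopping convention of \assref{ass:iteration}, since after termination the sequence is held fixed at a minimizer. Iterating, for $j<m$ we obtain $f(x^m)\le f(x^{j+1})$. Moreover $x^m\in\dom f$, because $x^0\in\dom f$ and the values are nonincreasing.

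\textbf{Step 2 (apply the progress identity).} Suppose step $j$ is nonterminal. By \Cref{lem:certificate}, its output $u=x^{j+1}$ is unique and comes with a radial subgradient $g^j=c_j(x^j-u)$ with $c_j>0$, so \Cref{thm:identity} applies. Taking $z=x^m\in\dom f$, Step 1 gives $f(z)\le f(u)$. Then \eqref{eq:lower-fejer} yields $\norm{x^{j+1}-x^m}^2\le\norm{x^j-x^m}^2-t_j^2$, which is \eqref{eq:future-fejer}.

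Alternatively, the same bound follows from the halfspace \eqref{eq:cut}. Here $x^m\in H_j$, $u=\Proj_{H_j}(x^j)$, and $\norm{x^j-u}=t_j$, so Pythagoras for projection onto a halfspace gives the same inequality. The conclusion also holds when $m=j+1$, where the left side is zero and the right side is $t_j^2-t_j^2=0$.

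\textbf{Step 3 (self-contraction).} Given $i\le j\le m$ in the prefix, the case $j=m$ is trivial, so assume $j<m$. Applying \eqref{eq:future-fejer} successively at indices $i,i+1,\dots,j-1$, all of which are below $m$, gives
\[\norm{x^{j}-x^m}\le\norm{x^{j-1}-x^m}\le\cdots\le\norm{x^i-x^m}.\]
This is \Cref{def:self-contracted}.

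The only delicate point is the scope of "prefix". If the prefix includes a final terminal output $x^m$, \eqref{eq:future-fejer} is still needed only for the nonterminal indices $j<m$, and Step 2 covers those. I expect no real obstacle beyond checking that $x^m\in\dom f$ and that the sublevel condition uses $f(x^{j+1})$ rather than $f(x^j)$.
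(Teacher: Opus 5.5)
Your proposal is correct and follows the paper's proof: nonincreasing objective values give $f(x^m)\le f(x^{j+1})$, then \eqref{eq:lower-fejer} with $z=x^m$ yields \eqref{eq:future-fejer}, and chaining these comparisons gives self-contraction. The halfspace route via \eqref{eq:cut} and the explicit check that $x^m\in\dom f$ are valid but not needed.
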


The proof is given in \Cref{proof:termination:1}.

\begin{lemma}[Finite length of bounded self-contracted sequences]\label{lem:length}
There is a finite constant $C_d$, depending only on dimension, such that every bounded self-contracted sequence in $\R^d$ has total length at most $C_d$ times its diameter. One may take
\[
 C_1=2,\qquad C_d=4\sqrt d(1+4\sqrt d)^d\quad(d\ge2).
\]
\end{lemma}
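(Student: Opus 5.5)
We prove the lemma by a direct covering argument. The one-dimensional case is handled by a separate, elementary argument.

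\textbf{The case $d=1$.} A self-contracted sequence in $\R$ is piecewise monotone, and each reversal of direction must shrink. If $z^i\le z^j$ and $z^m<z^j$ for some $i<j<m$, then self-contraction with the later point $z^m$ gives $|z^j-z^m|\le|z^i-z^m|$. So the turning points oscillate with shrinking amplitude. The length is then at most twice the diameter: one pass across the range, followed by back-and-forth excursions whose total is bounded by a second pass. This gives $C_1=2$.

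\textbf{The case $d\ge2$.} The key geometric fact concerns a late point $z^m$ and earlier points $z^i,z^j$ with $i\le j\le m$. Self-contraction says $z^j$ is no farther from $z^m$ than $z^i$ is. Thus the past of the sequence, seen from any future point, moves toward it. Concretely, for a segment of the trajectory between $z^i$ and $z^j$ with $\norm{z^i-z^j}$ comparable to its diameter, every later point lies in the closed halfspace $\{y:\ip{z^j-z^i}{y-(z^i+z^j)/2}\ge0\}$, that is, on the $z^j$ side of the bisector. Iterating this shows that the direction of motion is confined to a cone after a piece of trajectory of length comparable to its width.

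The plan is as follows.

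First, fix a finite prefix, which suffices by monotone convergence of lengths. Let $W$ be the diameter of the tail starting at index $n$. Split the sequence greedily into consecutive blocks, each block ending as soon as its endpoint-to-start displacement exceeds a fixed fraction $\delta W$ of the current tail diameter.

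Second, cover the unit sphere of directions by $N\le(1+4\sqrt d)^d$ caps of angular radius small enough that $\ip{u}{v}\ge 1/2$ within each cap. This is the standard volumetric count with mesh $1/(2\sqrt d)$ in $\ell_\infty$.

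Third, use the bisector property to show that, within blocks whose net displacement direction lies in a single cap, the tail diameter shrinks by a definite amount proportional to the displacement. The projection onto the cap's center direction is monotone up to a factor of $2$, and all later points are confined. Summing over one cap, the total displacement is at most about $2\sqrt d$ times $W$. Summing over the $N$ caps gives total net displacement at most $2\sqrt d\,N\,W$.

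Fourth, convert net displacement to length. Within a block, the length is at most twice the displacement plus a lower-order term, by choosing blocks of small diameter and using self-contraction again on the block. This yields the factor $4\sqrt d(1+4\sqrt d)^d$.

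\textbf{Main obstacle.} The hard step is the third: showing that the projection of later points onto a cap direction cannot decrease much, so that the contributions of blocks in the same cap telescope against the diameter. If the direct bisector argument proves too lossy, I will instead invoke the known result of \citet[Theorem 3.3]{BohmDaniilidis2020}, or the proof in \citet{DaniilidisEtAl2015}, which proves exactly a bound of the form $C_d\,\mathrm{diam}$ with an explicit dimensional constant. I would then check that their constant is bounded by the stated $4\sqrt d(1+4\sqrt d)^d$.
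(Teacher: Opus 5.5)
\textbf{There is a genuine gap in your $d\ge2$ argument, at exactly the step you flag, and the block-and-cap scheme cannot close it.}

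Classifying a block from $z^i$ to $z^j$ by its net direction $e=(z^j-z^i)/\norm{z^j-z^i}$ only gives the bisector halfspace $\ip{e}{y-z^i}\ge\norm{z^j-z^i}/2$ for later points $y$. That halfspace is unbounded orthogonally to $e$. Later points really can move almost orthogonally to earlier displacements, as along the trajectory of \Cref{thm:hard}. Once you replace $e$ by a cap center $w$ with only $\ip we\ge1/2$, the quantity $\ip{w}{y-z^i}$ carries an error of order $\norm{y-z^i}$. That error can be comparable to the whole diameter, while the gain is only of order $\norm{z^j-z^i}$. So nothing like $\min_{m\ge i}\ip{w}{z^m}$ is forced to increase, and nothing telescopes.

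Step four fails independently. The nonterminal prefix in \Cref{thm:hard} is self-contracted by \Cref{lem:self-contracted}. It has $n-1$ steps of length $t$ and endpoint displacement $t\sqrt{(n-1)+2(n-2)\epsilon}$. So a self-contracted piece, at any scale, can have length about $\sqrt{n-1}$ times its displacement. Hence ``length at most twice the displacement'' is false once $n\ge6$, and any correct bound on block length is the lemma itself. The fallback to the literature also would not deliver the specific constant $4\sqrt d(1+4\sqrt d)^d$; that constant comes from a particular net argument, which you would have to reproduce.

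\textbf{The missing idea} is to work step by step and to use the cone of \emph{all} future directions from $z^i$, rather than a displacement direction. Your bisector fact is the right input. For $i<j<k$, put $a=z^j-z^i$ and $b=z^k-z^i$; then $\norm{b-a}\le\norm b$ gives $\ip ab\ge\norm a^2/2\ge0$. So the unit directions from $z^i$ to later points have pairwise nonnegative inner products. \Cref{lem:acute-directions} (minimum-norm point of the convex hull plus Carath\'eodory) then gives $v_i$ with $\ip{v_i}{u}\ge1/\sqrt d$ for every such direction $u$. This bound is angular and scale-free, so it survives rounding $v_i$ to a point $w_i$ of a $1/(2\sqrt d)$-net $V$ of the sphere, with $|V|\le(1+4\sqrt d)^d$.

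Combine this with $\ell_i=\norm{z^{i+1}-z^i}\le2\norm{z^j-z^i}$ for $j>i$ (self-contraction plus the triangle inequality). It yields $\ip{w_i}{z^j-z^i}\ge\ell_i/(4\sqrt d)$ for every later $j$. Consequently $b_i(w)=\min_{j\ge i}\ip{w}{z^j}$ has these properties:
\begin{itemize}
\item it is nondecreasing in $i$;
\item its total increase is at most the diameter;
\item it jumps by at least $\ell_i/(4\sqrt d)$ whenever $w_i=w$.
\end{itemize}
Summing over $i$ and then over $w\in V$ gives $C_d=4\sqrt d(1+4\sqrt d)^d$, with no blocks and no displacement-to-length conversion.

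\textbf{Your $d=1$ sketch needs the same step-level quantification.} Shrinking amplitudes alone do not bound the excursions: amplitudes $1,1-\epsilon,1-2\epsilon,\ldots$ shrink and stay nested, yet sum to about $1/(2\epsilon)$. What you need is your own inequality with $i=j-1$. Since all later points lie on one side of $z^i$, it shows that the interval spanned by the future points loses at least $\ell_i/2$ at each step. Telescoping then gives length at most twice the diameter.
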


This is an instance of the Euclidean self-contraction theory of \citet{DaniilidisEtAl2015}. 
For completeness, \Cref{app:length} gives a discrete proof using a finite spherical net. 
Related finite-net arguments appear in \citet[Theorem 4.5 and Remark 4.6, accepted manuscript]{DaniilidisDevilleDurand2018}. 
The explicit constant is sufficient here and is not claimed to be sharp.

Together, these results bound the total radius that can be spent along any sequence of nonterminal steps.

\begin{theorem}[Bound on the sum of radii before termination]\label{thm:dimension}
Under \assref{ass:convex}, \assref{ass:attained}, and \assref{ass:iteration}, every prefix of $K$ nonterminal iterations satisfies
\begin{equation}\label{eq:length-budget}
 \sum\limits_{k=0}^{K-1}t_k\le2C_dD_0.
\end{equation}
For a constant radius and $D_0>0$,
\begin{equation}\label{eq:dimension-count}
 \left\lceil\frac{D_0}{t}\right\rceil
 \le N_t\le\min\left\{
 \left\lceil\frac{D_0^2}{t^2}\right\rceil,
 1+\left\lfloor\frac{2C_dD_0}{t}\right\rfloor
 \right\}.
\end{equation}
Consequently, for fixed dimension, the worst-case complexity is $\Theta_d(D_0/t)$ as $D_0/t\to\infty$.
\end{theorem}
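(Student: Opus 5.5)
The plan is to combine the self-contraction of \Cref{lem:self-contracted} with the length bound of \Cref{lem:length}. The one nonobvious step is controlling the diameter of a nonterminal prefix by $D_0$.

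\textbf{The sum of radii.} Consider a prefix $x^0,\dots,x^K$, where the steps $0,\dots,K-1$ are nonterminal. By \Cref{lem:self-contracted}, this finite sequence is self-contracted. Since every nonterminal step has $\norm{x^{k+1}-x^k}=t_k$ by \Cref{lem:certificate}, its length is exactly $\sum_{k<K}t_k$. By \Cref{lem:length}, it therefore suffices to show that the diameter is at most $2D_0$.

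Let $x^\star=\Proj_{\X}(x^0)$. Applying \eqref{eq:lower-fejer} with $z=x^\star$, which is admissible because $f(x^\star)\le f(x^{k+1})$, gives $\norm{x^{k+1}-x^\star}\le\norm{x^k-x^\star}$. Hence every iterate lies in $\B(x^\star,D_0)$, and the diameter is at most $2D_0$. This proves \eqref{eq:length-budget}.

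There is a subtlety in whether to include the terminal output in the prefix. If it is included, \Cref{lem:self-contracted} needs nonterminal outputs. I would therefore apply the lemma only to $x^0,\dots,x^K$, which is exactly what is stated.

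\textbf{Upper bound in \eqref{eq:dimension-count}.} Suppose $N_t=N$. Then the first $N-1$ steps are nonterminal, and \eqref{eq:length-budget} gives $(N-1)t\le 2C_dD_0$. Hence $N\le 1+\lfloor 2C_dD_0/t\rfloor$. The other branch of the minimum is \eqref{eq:squared-count} from \Cref{cor:squared}.

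\textbf{Lower bound in \eqref{eq:dimension-count}.} By the triangle-inequality estimate \eqref{eq:distance-triangle}, $D_{k+1}\ge D_k-t$ on nonterminal steps. A terminal step at $x^k$ requires $D_k\le t$. So after $k$ steps we have $D_k\ge D_0-kt$, and termination at step $N$ needs $D_0-(N-1)t\le t$, that is, $N\ge D_0/t$. Since $N$ is an integer, $N\ge\lceil D_0/t\rceil$.

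If the lower bound is meant as a worst case rather than for every instance, the argument above still shows it holds for every instance. The example $f(x)=|x|$ from \Cref{ex:one-step} shows it is attained.

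\textbf{The $\Theta_d$ statement.} This follows by combining the two sides: the upper bound is $O_d(D_0/t)$ and the lower bound is $\Omega(D_0/t)$ uniformly.

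I expect no real obstacle once \Cref{lem:length} is granted. The only care needed is the diameter estimate via Fejér monotonicity with respect to the fixed point $x^\star$, rather than with respect to $\X$ as a set.
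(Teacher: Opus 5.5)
Your proposal is correct and follows the paper's proof essentially step for step. You bound the diameter by $2D_0$ via Fej\'er monotonicity toward the fixed point $\Proj_{X^\star}(x^0)$, obtain \eqref{eq:length-budget} from self-contraction plus the length lemma, and get the upper bound from $(N_t-1)t\le 2C_dD_0$ together with \Cref{cor:squared}. The only cosmetic difference is the lower bound: you propagate the $1$-Lipschitz estimate $D_{k+1}\ge D_k-t$ step by step, whereas the paper applies the triangle inequality once between $x^0$ and the terminal output $x^{N_t}\in X^\star$, and the two arguments are equivalent.
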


The proof is left to \Cref{proof:termination:2}. 
Equation \eqref{eq:length-budget} makes the finite-termination statement in \Cref{thm:general-convergence}(ii) quantitative: if the run remained nonterminal while $\sum_k t_k=\infty$, its partial radius sums would eventually exceed $2C_dD_0$. For a constant radius, the same bound gives a linear dependence on $D_0/t$ in fixed dimension, improving the dimension-free quadratic upper bound.

The next result adds the converse and characterizes exactly which prescribed radius sequences force finite termination on every problem with a minimizer.

\begin{theorem}[Universal criterion for finite termination]\label{thm:criterion}
Fix a positive radius sequence $(t_k)$ and a finite dimension $d\ge1$. The following are equivalent:
\begin{enumerate}[label=(\roman*)]
\item $\sum_{k=0}^\infty t_k=\infty$;
\item for every $f$ satisfying \assref{ass:convex} and \assref{ass:attained} and every $x^0\in\dom f$, the stopped exact iteration attains a minimizer after finitely many iterations.
\end{enumerate}
\end{theorem}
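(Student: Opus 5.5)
The implication (i)$\Rightarrow$(ii) is essentially \Cref{thm:dimension}. Suppose $\sum_k t_k=\infty$, and suppose the stopped iteration never reached a minimizer. Then every prefix of $K$ iterations is nonterminal. Hence \eqref{eq:length-budget} gives $\sum_{k<K}t_k\le 2C_dD_0$ for every $K$. Letting $K\to\infty$ contradicts divergence of the radii. The only care needed is the degenerate start $x^0\in\X$, where $N=0$ trivially by \assref{ass:iteration}. One could equally cite \Cref{thm:general-convergence}(ii), but the length bound is the self-contained route.

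For (ii)$\Rightarrow$(i) I would argue by contraposition. Assume $\sum_k t_k=T<\infty$, and exhibit one objective with attained minimum and one start from which the iteration never terminates. The natural choice is a one-dimensional example embedded in $\R^d$. Take
\[
f(x)=|x_1|,\qquad x^0=(2T,0,\dots,0)
\]
(or $f(x)=\max\{x_1,0\}$). Here $\X=\{x:x_1=0\}$ is nonempty, and $D_0=2T$. I then claim by induction that $x^k=(2T-\sum_{i<k}t_i,0,\dots,0)$. Suppose $x^k$ has first coordinate $a_k\ge 2T-\sum_{i<k}t_i>T\ge t_k$. Then the ball $\B(x^k,t_k)$ lies in $\{x_1>0\}$, where $f=x_1$. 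So the step is nonterminal, and its unique minimizer is $x^k-t_ke_1$, as in \Cref{lem:certificate}. The first coordinate therefore stays above $2T-T=T>0$ forever, so no minimizer is attained. This contradicts (ii).

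The main thing to check is the strictness in the induction, so that each ball genuinely misses $\X$. Choosing $x^0$ at distance $2T$ rather than $T$ gives a margin: $a_k-t_k\ge 2T-\sum_{i\le k}t_i>0$ strictly, since all radii are positive and the partial sums are at most $T$. Dimension $d\ge1$ plays no role beyond embedding the example, and the objective is clearly proper, closed and convex with attained minimum, so all hypotheses of (ii) are met. I expect no real obstacle: the heavy lifting (finite length of self-contracted sequences) is already in \Cref{thm:dimension}.
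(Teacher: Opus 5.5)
Your proof is correct and follows the paper's argument. Sufficiency comes from the length budget \eqref{eq:length-budget}, and necessity comes from an explicit example in which every ball step is a pure translation along $-e_1$. The paper's example is $f(x)=\norm{x}$ started at $x^0=(T+1)e_1$, which also shows that the criterion fails for objectives with a unique minimizer; your $|x_1|$ has a whole hyperplane of minimizers. One caution concerns your aside about citing \Cref{thm:general-convergence}(ii) instead: the paper derives that part from this theorem, so that route would be circular. The length-bound route you actually take is the correct one.
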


The proof is given in \Cref{proof:termination:3}. 
Sufficiency follows from \eqref{eq:length-budget}: on any particular problem, termination must occur before the partial radius sum exceeds $2C_dD_0$.
The converse shows that no summable positive schedule has this guarantee uniformly over all convex objectives and starting points. 
For example, the harmonic schedule $t_k=1/(k+1)$ forces finite termination even though $\sum_k t_k^2<\infty$.

The bound \eqref{eq:length-budget} controls only the nonterminal part of the trajectory. 
Under a constant radius and nearest-minimizer selection at termination, the terminal displacement can also be bounded, yielding a bound on the full trajectory length.

\begin{proposition}[Trajectory length including the terminal step]\label{prop:general-radius-cap}
Under \assref{ass:convex}, \assref{ass:attained}, and \assref{ass:iteration}, suppose that the nearest-minimizer selection at termination of \Cref{def:projected} is used, the radius is constant, $t_k\equiv t>0$, and $D_0>0$.
Let
\[
 J=\left\lceil\frac{D_0^2}{t^2}\right\rceil-1,
 \qquad \Psi_t(D_0)=Jt+\sqrt{D_0^2-Jt^2}.
\]
Then the full trajectory length $S_t$, including the terminal displacement, satisfies
\[
 S_t
 \le
 \Psi_t(D_0)
 \le
 \frac{D_0^2}{t}+\frac{t}{4}.
\]
If $D_0^2/t^2$ is an integer, then $Jt+\sqrt{D_0^2-Jt^2}=D_0^2/t$. If $t\ge D_0$, the first step is terminal and nearest-minimizer selection at termination gives $S_t=D_0$.
\end{proposition}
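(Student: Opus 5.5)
The approach is to write the trajectory length as the number of nonterminal steps times $t$, plus the terminal displacement, and then bound the worst case using only the squared-radius decrease of \Cref{cor:squared}. First dispose of $t\ge D_0$: then $D_0\le t$, the first step is terminal, and \Cref{def:projected} gives $\norm{x^0-T_t(x^0)}=\min\{t,D_0\}=D_0$, so $S_t=D_0$. This is consistent with $J=0$ and $\Psi_t(D_0)=D_0$, and $D_0\le D_0^2/t+t/4$ holds by AM--GM.

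For the main case, suppose there are $n=N_t-1$ nonterminal steps followed by a terminal one. Each nonterminal step has displacement exactly $t$ by \Cref{lem:certificate}. By nearest-minimizer selection the terminal displacement equals $D_n\le t$. Hence $S_t=nt+D_n$. By \Cref{cor:squared}, $D_n^2\le D_0^2-nt^2$. Nontermination at step $n-1$ means $D_{n-1}>t$, and the same corollary gives $D_0^2-(n-1)t^2\ge D_{n-1}^2>t^2$, that is, $nt^2<D_0^2$, so $n\le J$. Therefore
\[
S_t\le h(n):=nt+\min\bigl\{t,\sqrt{D_0^2-nt^2}\bigr\},\qquad 0\le n\le J.
\]

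The key step is to show that $h$ is maximized at $n=J$, and I expect this to be the main obstacle. For $n<J$ we have $D_0^2-nt^2\ge D_0^2-(J-1)t^2>t^2$, so $h(n)=(n+1)t\le Jt$. At $n=J$, $0<D_0^2-Jt^2\le t^2$, so $h(J)=Jt+\sqrt{D_0^2-Jt^2}\ge Jt$. Thus $S_t\le\Psi_t(D_0)$.

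For the explicit bound, write $s=D_0^2-Jt^2\in(0,t^2]$. Then $Jt=(D_0^2-s)/t$ and $\Psi_t=D_0^2/t-s/t+\sqrt s$. Maximizing $\sqrt s-s/t$ over $s$ gives $\sqrt s=t/2$ and maximum value $t/4$, which yields $\Psi_t\le D_0^2/t+t/4$. If $D_0^2/t^2$ is an integer, then $s=t^2$ and $\Psi_t=D_0^2/t$. I also need to check the edge case in which the terminal step occurs immediately after exactly $n$ steps with $D_n$ possibly equal to $t$. This is covered, since the terminal condition is $D_n\le t$ and $n$ still satisfies $nt^2<D_0^2$.
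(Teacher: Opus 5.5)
Your proof is correct and follows the paper's argument. The decomposition $S_t=nt+D_n$, the bound $D_n^2\le D_0^2-nt^2$, the count $n\le J$, and the substitution $s=D_0^2-Jt^2$ (the paper's $q=s/t^2$) all match; the only cosmetic difference is that keeping $\min\{t,\sqrt{D_0^2-nt^2}\}$ makes your maximization over $n$ immediate, whereas the paper drops the cap $t$ and instead checks that $n\mapsto n+\sqrt{D_0^2/t^2-n}$ is nondecreasing on $\{0,\ldots,J\}$.
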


The proof is given in \Cref{app:general-radius-cap}.  The nearest-minimizer selection at termination is needed to control the final displacement.

\subsection{Summable radii and trajectory convergence}

\Cref{thm:criterion} identifies nonsummability as the exact condition for finite termination uniformly over all problems with a minimizer. 
When the radii are summable, finite termination is no longer guaranteed. Nevertheless, \Cref{thm:general-convergence} shows that the trajectory still has finite length and converges. 
We now quantify this convergence by relating the remaining radius sum to the distance and objective error relative to the limiting point.
The proof of the following theorem is given in \Cref{app:arbitrary-radii}. 

\begin{theorem}[Finite length and tail bounds]\label{thm:arbitrary-radii}
Under \assref{ass:convex}, \assref{ass:attained}, and \assref{ass:iteration}, every stopped exact \BPM{} trajectory satisfies
\begin{equation}\label{eq:arbitrary-radius-convergence}
 \sum\limits_{k=0}^\infty\norm{x^{k+1}-x^k}<\infty,\qquad
 x^k\longrightarrow\bar x\in\dom f,\qquad
 f(x^k)\downarrow f(\bar x).
\end{equation}
If no step is terminal, then necessarily $\sum_{k=0}^\infty t_k<\infty$. Moreover, for any radial subgradient $g^0\in\partial f(x^1)$ from the first step,
\begin{equation}\label{eq:radius-tail-error}
 \norm{x^k-\bar x}
 \le
 \sum\limits_{j=k}^\infty t_j
 \quad(k\ge0),
 \qquad
 0\le f(x^k)-f(\bar x)
 \le
 \norm{g^0}\sum\limits_{j=k}^\infty t_j
 \quad(k\ge1).
\end{equation}
If $\sum_{k=0}^\infty t_k<\infty$ is assumed, then \assref{ass:attained} is unnecessary: \eqref{eq:arbitrary-radius-convergence} still holds, and on an infinite nonterminal run so does \eqref{eq:radius-tail-error}.
\end{theorem}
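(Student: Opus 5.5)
The plan is to split the trajectory into a nonterminal prefix and, possibly, a frozen tail, and to treat the terminal and infinite cases separately. If some step is terminal, the stopped sequence is eventually constant at a minimizer. Its length is then a finite sum, and the three conclusions in \eqref{eq:arbitrary-radius-convergence} are immediate with $\bar x$ that minimizer. Suppose instead that no step is terminal. Then every step has length $\norm{x^{k+1}-x^k}=t_k$ by \Cref{lem:certificate}, and \eqref{eq:length-budget} of \Cref{thm:dimension} gives $\sum_{k<K}t_k\le 2C_dD_0$ for every $K$. Hence $\sum_k t_k<\infty$, which is the claimed necessity. The length series is then summable, so $(x^k)$ is Cauchy and converges to some $\bar x$.

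To show $\bar x\in\dom f$ and $f(x^k)\downarrow f(\bar x)$, I use monotonicity and closedness. Since $f(x^{k+1})\le f(x^k)$ (feasibility of $x^k$ in its ball) and $f$ is lower semicontinuous, we get $f(\bar x)\le\lim_k f(x^k)\le f(x^0)<\infty$, so $\bar x\in\dom f$. For the reverse inequality, I first take $k\ge1$ and invoke the radial subgradient at the new point. \Cref{prop:certificate-monotone} gives $\norm{g^{k-1}}\le\norm{g^0}$ for the radial subgradients $g^{k-1}\in\partial f(x^k)$. The subgradient inequality at $x^k$ then yields
\[
f(x^k)-f(\bar x)\le\ip{g^{k-1}}{x^k-\bar x}\le\norm{g^0}\,\norm{x^k-\bar x}.
\]
This simultaneously gives convergence of values and the second part of \eqref{eq:radius-tail-error}, once the distance tail bound is in place.

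The distance tail bound comes from the triangle inequality applied to the telescoping tail:
\[
\norm{x^k-\bar x}\le\sum_{j\ge k}\norm{x^{j+1}-x^j}=\sum_{j\ge k}t_j
\]
on an infinite nonterminal run. In the terminal case the displacements after stopping are zero, and the terminal displacement is at most $t_j$, so the same bound holds. The value bound in the terminal case is even easier: once $x^k$ is optimal, $f(x^k)-f(\bar x)=0$. Before termination, the displayed subgradient argument applies, using radial subgradients only of nonterminal steps up to $k-1$.

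Finally, the attainment-free statement needs care, since \eqref{eq:length-budget} used $D_0$. Here I assume $\sum_k t_k<\infty$ directly. Each step satisfies $\norm{x^{k+1}-x^k}\le t_k$ by ball feasibility, terminal or not, so finite length and the Cauchy property follow without a minimizer. The lower-semicontinuity and subgradient arguments above never used $\X$, only \Cref{prop:certificate-monotone}'s consecutive-nonterminal comparison. The point I expect to be most delicate is this use of \Cref{prop:certificate-monotone}. It requires the chain $\norm{g^{k-1}}\le\dots\le\norm{g^0}$ through consecutive nonterminal steps, which holds on an infinite nonterminal run and on the prefix before termination. I also have to confirm that the inequality $f(x^k)\ge f(\bar x)$ (needed for the lower bound $0\le$) follows from lower semicontinuity together with monotone decrease.
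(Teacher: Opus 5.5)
Your proposal is correct and follows essentially the same route as the paper: the length budget \eqref{eq:length-budget} forces $\sum_k t_k<\infty$ on a nonterminal run, the triangle inequality gives the distance tail, and the radial subgradient $g^{k-1}\in\partial f(x^k)$ with $\norm{g^{k-1}}\le\norm{g^0}$ from \Cref{prop:certificate-monotone} gives both value convergence and the value tail. The differences are minor. You obtain $\bar x\in\dom f$ and a finite limit of $f(x^k)$ directly from lower semicontinuity and properness, whereas the paper first bounds the values below via the subgradient inequality at $x^1$. You also explicitly check the tail bounds on terminating runs, which the paper's proof leaves implicit.
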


The preceding bounds use the remaining radius sum. 
The next corollary instead controls the actual length of the remaining trajectory in terms of the current distance to the solution set.

\begin{corollary}[Remaining trajectory length]\label{cor:tail-length}
Under \assref{ass:convex}, \assref{ass:attained}, and \assref{ass:iteration}, use the nearest-minimizer selection at termination of \Cref{def:projected}. Let $\bar x=\lim_k x^k$ and $L_k=\sum_{i=k}^\infty\norm{x^{i+1}-x^i}$. Then, for every $k\ge0$,
\begin{equation}\label{eq:tail-length-distance}
 \norm{x^k-\bar x}\le L_k\le2C_dD_k.
\end{equation}
In particular, if $D_k\le A\rho^k$ for some $A>0$ and $0<\rho<1$, then $\bar x\in\X$, and both $L_k$ and $\norm{x^k-\bar x}$ decay at least geometrically, with upper bound $2C_dA\rho^k$.
\end{corollary}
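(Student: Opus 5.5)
The plan is to apply the length machinery of \Cref{lem:self-contracted} and \Cref{lem:length} to the tail $(x^i)_{i\ge k}$ rather than to the whole trajectory. The first inequality in \eqref{eq:tail-length-distance} is routine. By \Cref{thm:arbitrary-radii} the trajectory has finite length and converges to $\bar x$, so for every $m>k$ the triangle inequality gives $\norm{x^k-x^m}\le\sum_{i=k}^{m-1}\norm{x^{i+1}-x^i}\le L_k$. Letting $m\to\infty$ yields $\norm{x^k-\bar x}\le L_k$.

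For $L_k\le2C_dD_k$, I split into cases according to whether step $k$ is terminal. If $D_k\le t_k$, or if $x^k$ is already optimal, then nearest-minimizer selection (\Cref{def:projected}) gives $x^{k+1}=\Proj_{\X}(x^k)$ and the sequence is frozen afterwards. Hence $L_k=D_k\le2C_dD_k$, using $C_d\ge1$. Otherwise, the run started at $x^k$ is itself an exact \BPM{} run with initial distance $D_k$. Let $M\ge k$ be the last nonterminal index before termination, or $M=\infty$. I would then bound two pieces: the nonterminal part $\sum_{i=k}^{M-1}t_i$ and the terminal displacement.

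The nonterminal part follows from \eqref{eq:length-budget} in \Cref{thm:dimension}, applied with starting point $x^k$. It gives $\sum_{i=k}^{M-1}t_i\le2C_dD_k$, and for $M=\infty$ this holds for every finite prefix. I expect the terminal displacement to be the main obstacle, since naively adding it costs an extra $D_M\le D_k$ and spoils the constant $2C_d$.

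To absorb the terminal displacement I would look inside the proof of \eqref{eq:length-budget}. There the self-contracted prefix $x^k,\dots,x^{M}$ is shown to lie in a set of diameter at most $2D_k$. Indeed, by \Cref{cor:squared} every point stays within $D_k$ of $p^k=\Proj_{\X}(x^k)$; more precisely $\norm{x^i-p^k}\le\norm{x^k-p^k}=D_k$ by \eqref{eq:lower-fejer} with $z=p^k$. I would then append the terminal point $x^{M+1}=\Proj_{\X}(x^M)$ and check two things. First, the extended sequence is still self-contracted. For $j<M$ we have $f(x^{M+1})=\fs\le f(x^{j+1})$, so \eqref{eq:lower-fejer} with $z=x^{M+1}$ gives $\norm{x^{j+1}-x^{M+1}}\le\norm{x^j-x^{M+1}}$. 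Second, $x^{M+1}$ stays in a ball of radius $D_k$ around some fixed point. Since $\X$ is convex, $\norm{x^{M+1}-p^k}\le\norm{x^M-p^k}\le D_k$ because projection onto $\X$ is nonexpansive and fixes $p^k$. The extended sequence therefore has diameter at most $2D_k$, and \Cref{lem:length} gives total length at most $2C_dD_k$, which includes the terminal step. If this bookkeeping fails, the fallback is to settle for the bound $(2C_d+1)D_k$.

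The geometric consequence then follows directly. If $D_k\le A\rho^k\to0$, closedness of $\X$ together with $\dist(\bar x,\X)=\lim D_k=0$ gives $\bar x\in\X$. The two displayed bounds then yield $\norm{x^k-\bar x}\le L_k\le2C_dA\rho^k$.
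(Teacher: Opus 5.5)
Your proposal is correct and matches the paper's argument: the whole tail from $x^k$, including the projected terminal point, stays within $D_k$ of $p^k=\Proj_{\X}(x^k)$ and is self-contracted, so \Cref{lem:length} gives $L_k\le2C_dD_k$, and closedness of $\X$ gives the geometric consequence. Your separate treatment of a terminal step at $k$ and the detour through \eqref{eq:length-budget} are harmless but not needed, and the fallback bound $(2C_d+1)D_k$ is never required because your check of the appended terminal point goes through.
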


The proof is given in \Cref{app:tail-length}. 
Although summable radii give point and objective value convergence, the limit still depends on the radius schedule.
Even for the same objective, the trajectory may terminate at a minimizer, converge to one asymptotically, or converge to a nonoptimal point.

\begin{example}[Three behaviors under summable radii]\label{ex:radius-outcomes}
Let $f(x)=|x|$ and $x^0=D>0$. Consider the following three summable radius schedules.

\begin{enumerate}[label=(\roman*)]
\item If $t_k=D\,2^{-k}$, then the first radius equals $D$, so the first step reaches the minimizer $0$.

\item If $t_k=D\,2^{-k-1}$, then $x^k=D\,2^{-k}$ for every $k\ge0$. Hence no step is terminal, but $x^k\to0$.

\item If $t_k=D\,2^{-k-2}$, then
$x^k=D/2+D\,2^{-k-1}$ for every $k\ge0$. Hence the trajectory is again infinite, but now $x^k\to D/2$, which is not optimal.
\end{enumerate}

Thus, even for the same objective and starting point, summable radii can lead to finite termination, asymptotic convergence to a minimizer, or convergence to a nonoptimal point. 
These formulas follow from $x^{k+1}=\max\{x^k-t_k,0\}$.  
In the two infinite trajectories, both bounds in \eqref{eq:radius-tail-error} hold with equality, with $\norm{g^0}=1$.
\end{example}

\subsection{Dimension dependence of the linear bound}

The linear step bound in \eqref{eq:dimension-count} is a fixed dimensional result because its constant depends on $d$. 
This dependence is unavoidable. 
The linear bound cannot hold with a constant independent of dimension. 
Indeed, the construction below shows that, as the dimension increases, the number of iterations can scale like $D_0^2/t^2$ rather than $D_0/t$.

\begin{theorem}[Polyhedral lower bound for the step count]\label{thm:hard}
For every integer $n\ge2$, every $t>0$, and every $\epsilon\in(0,1/2)$, there is a finite convex piecewise-affine function on $\R^n$ with a unique minimizer such that \BPM{} performs exactly $n$ radius-$t$ iterations from $x^0=0$ and
\begin{equation}\label{eq:hard-distance}
 D_0^2=t^2[n+2(n-1)\epsilon].
\end{equation}
If $\epsilon=o(1/n)$, then $D_0^2/t^2=n+o(1)$ and
\[
 \frac{N_t}{\lceil D_0^2/t^2\rceil}\longrightarrow1
 \qquad\text{as }n\to\infty.
\]
Consequently, no constant independent of dimension can bound $N_t$ by $CD_0/t$ for all convex objectives.
\end{theorem}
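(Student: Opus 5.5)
The plan is to build an explicit staircase instance. The \BPM{} iterates will be $x^k=t(e_1+\dots+e_k)$ for $k=0,\dots,n-1$: every step moves a full radius $t$ along a fresh orthogonal coordinate direction. The unique minimizer $x^\star$ will sit just beyond $x^{n-1}$, so that the $n$-th step is terminal. Orthogonality is what defeats the self-contraction bound in high dimension: the iterates travel total length $(n-1)t$ while the distance to the minimizer only grows like $\sqrt n\,t$. This is exactly the quadratic regime $D_0^2/t^2\approx n$.

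\textbf{Construction.} I would take $f(x)=\max_{0\le i\le n}\ell_i(x)$ with affine pieces tailored to the staircase. For $0\le k\le n-2$, the piece $\ell_k$ should be the one active at $x^{k+1}$, with gradient a negative multiple of $e_{k+1}$ plus $\epsilon$-sized corrections in the later coordinates $e_{k+2},\dots,e_n$. These corrections tilt the minimizer off the staircase, and they are the source of the $2(n-1)\epsilon$ term in \eqref{eq:hard-distance}. The final pieces should define a sharp valley whose unique minimizer is the point $x^\star$. Its coordinates $x^\star_i$ are chosen so that $|x^\star|^2=t^2[n+2(n-1)\epsilon]$ exactly. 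First I would try $x^\star_i=t(1+\epsilon)$ or a similar perturbation of the coordinates $i<n$, then adjust $x^\star_n$ to absorb the quadratic error terms in $\epsilon$. The levels $b_i$ are then fixed so that the active pieces interlock, in the sense that $f(x^{k+1})<f(x^k)$ along the staircase and each $\ell_k$ is exactly active at $x^{k+1}$.

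\textbf{Verification.} By the optimality system \eqref{eq:view-system} in \Cref{prop:four-views}, $x^{k+1}\in\brox_f^t(x^k)$ as soon as some $g\in\partial f(x^{k+1})$ is a positive multiple of $x^k-x^{k+1}=-te_{k+1}$. So for each nonterminal step I would exhibit such a radial subgradient as a convex combination of the gradients of the pieces active at $x^{k+1}$. Uniqueness of the output then follows from \Cref{lem:certificate}. Two distance checks complete the count. For $k\le n-2$, steps are nonterminal because $D_k=\norm{x^k-x^\star}>t$, which one checks coordinatewise. At $x^{n-1}$, the step is terminal because $D_{n-1}\le t$. Together these give exactly $n$ iterations, and the value of $D_0$ in \eqref{eq:hard-distance} is read off directly.

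\textbf{Main obstacle and asymptotics.} I expect the main obstacle to be choosing the affine pieces so that two things hold at once. First, at each $x^{k+1}$ the subdifferential must contain a vector exactly parallel to $-e_{k+1}$. Second, pieces intended for later stages must not be active too early and steer the ball minimizer sideways. The $\epsilon$-tilts with $\epsilon<1/2$ leave enough slack for strict inequalities between pieces, and I would verify this by comparing the pieces pairwise along the staircase. The remaining claims are arithmetic. If $\epsilon=o(1/n)$, then $D_0^2/t^2=n+o(1)$, so $\lceil D_0^2/t^2\rceil\in\{n,n+1\}$ and the ratio $N_t/\lceil D_0^2/t^2\rceil$ tends to $1$. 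Finally, a bound $N_t\le CD_0/t$ with $C$ independent of dimension would give $n\le C\sqrt{n+1}$ for every $n$, which is impossible.
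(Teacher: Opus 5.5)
\textbf{The staircase cannot be a BPM trajectory.} Once $n\ge3$, your path $x^k=t(e_1+\dots+e_k)$ is not the \BPM{} trajectory of any convex function, so no choice of affine pieces can make the plan work.

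Steps $0$ and $1$ are both nonterminal. Their radial subgradients would therefore be $g^0=-c_0e_1\in\partial f(x^1)$ and $g^1=-c_1e_2\in\partial f(x^2)$ with $c_0,c_1>0$. Monotonicity of $\partial f$ requires $\ip{g^0-g^1}{x^1-x^2}\ge0$, but this equals $\ip{-c_0e_1+c_1e_2}{-te_2}=-c_1t<0$.

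Equivalently, the subgradient inequality at $x^1$ gives $f(x^2)\ge f(x^1)+\ip{g^0}{te_2}=f(x^1)$. This contradicts the strict decrease $f(x^2)<f(x^1)$ forced by \eqref{eq:drop}, which your own construction also requires. This is exactly \eqref{eq:acute} in \Cref{prop:certificate-monotone}: consecutive nonterminal step directions must have strictly positive inner product, so exact orthogonality is ruled out. Your reading of the $2(n-1)\epsilon$ term as a tilt of the minimizer off the staircase stems from the same misconception. Your specific guess for $x^\star$ also breaks down when $(n-1)\epsilon^2>1$, but that is secondary.

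\textbf{The missing idea.} Make the step directions nearly, not exactly, orthogonal. Take unit vectors $n_0,\dots,n_{n-1}$ whose Gram matrix is tridiagonal, with ones on the diagonal and $\epsilon$ on the off-diagonals; it is positive definite for $\epsilon<1/2$. Set $x^{j+1}=x^j-tn_j$ and place the minimizer at $x^\star=x^n$. Then $\norm{x^j-x^\star}^2=t^2[(n-j)+2(n-j-1)\epsilon]$. This gives \eqref{eq:hard-distance} directly and $D_{n-1}=t$, so the last step is terminal.

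The positive consecutive inner product is what lets the piece active at $u_j=x^{j+1}$ fall to $F_j+\ip{g_j}{u_i-u_j}=F_j-a_jt\epsilon=0$ at every later $u_i$. That leaves room for the next piece and produces the strict decrease.

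\textbf{What remains.} The substantive work, which your proposal defers to a pairwise comparison, is choosing the weights and levels. The paper takes $g_j=\rho^jn_j$ and $F_j=\rho^jt\epsilon$ with $\rho=\epsilon/(1+\epsilon)$, so that earlier pieces are dominated at later iterates. It adds $\max\{0,\cdot\}$ and small symmetric pieces $\eta|\ip{n_j}{z-x^\star}|$, which make $x^\star$ the unique minimizer without becoming active at the $u_i$. Once this construction is in place, your asymptotic argument and the final contradiction $n\le C\sqrt{n+1}$ go through.
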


The proof is given in \Cref{proof:termination:4}. 
The lower bound is asymptotic over increasing dimensions, while for each fixed dimension the linear bound in \Cref{thm:dimension} still applies. 
In this family, the preterminal trajectory has length $(n-1)t$. 
Consequently, any constant $L_d$ satisfying $\sum_{k<N_t-1}t\le L_dD_0$ uniformly over all \BPM{} trajectories in dimension $d$ must satisfy $L_d=\Omega(\sqrt d)$.

\section{Reformulations and preservation of BPM geometry}
\label{sec:reformulations}

Equivalent formulations of the same optimization problem need not induce the same \BPM{} trajectory, because \BPM{} depends on the geometry of Euclidean balls. 
We first identify transformations that preserve exact ball calls. 
We then study the epigraph reformulation, which preserves the underlying optimization problem but changes distances, and hence the radius geometry. 
The epigraph reformulation itself is classical \citep[Section 4.1.3]{BoydVandenberghe2004}.

\begin{proposition}[Transformations preserving exact ball calls]
\label{prop:oracle-transformations}
Under \assref{ass:convex}, the following transformations preserve exact \BPM{} iterations.

\begin{enumerate}[label=(\roman*)]
\item Let $x=a+sQy$, where $s>0$ and $Q$ is orthogonal. Exact \BPM{} iterations in $y$-space with radius $t$ correspond to exact \BPM{} iterations in $x$-space with radius $st$. In particular, translations and orthogonal changes of coordinates preserve the radii.

\item Let $\widetilde f=\chi\circ f$ on $\dom f$, with $\widetilde f=+\infty$ elsewhere, where $\chi$ is strictly increasing on the finite range of $f$ and $\widetilde f$ is proper, closed and convex. Then $f$ and $\widetilde f$ have the same exact ball-minimizer sets at every center and radius, and the same solution set. Hence, under the same selections, they generate the same fixed-radius trajectories and termination counts.
\end{enumerate}
\end{proposition}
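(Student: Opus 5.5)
Both parts reduce to direct set-level identities: the exact ball oracle is determined only by the ball $\B(x,t)$ and by the order structure of the values of $f$ on it.

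\textbf{Part (i).} Set $\widetilde f(y)=f(a+sQy)$ and $\Phi(y)=a+sQy$. First, $\widetilde f$ satisfies \assref{ass:convex}, because composing with an affine bijection preserves properness, closedness and convexity. Second, $\Phi$ is a similarity with ratio $s$: $\norm{\Phi(y)-\Phi(y')}=s\norm{y-y'}$, since $Q$ is orthogonal. Hence $\Phi(\B(y,t))=\B(\Phi(y),st)$. Minimizing $\widetilde f$ over $\B(y,t)$ is then the same problem as minimizing $f$ over $\Phi(\B(y,t))$, after the change of variables $v=\Phi(u)$. This gives
\[
\brox_f^{st}(\Phi(y))=\Phi\bigl(\brox_{\widetilde f}^{t}(y)\bigr).
\]
Two further checks complete the correspondence. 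Domains correspond, $\Phi(\dom\widetilde f)=\dom f$. Solution sets correspond, $\argmin f=\Phi(\argmin\widetilde f)$, so terminal steps correspond. An induction on $k$ then shows that $x^k=\Phi(y^k)$ for matching selections, including the stopping convention of \assref{ass:iteration}. The case $s=1$ gives the statement about translations and orthogonal maps.

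\textbf{Part (ii).} The key fact is that a strictly increasing $\chi$ preserves and reflects order on the range of $f$: for $u,v\in\dom f$,
\[
f(u)\le f(v)\iff \widetilde f(u)\le\widetilde f(v).
\]
Points outside $\dom f$ have value $+\infty$ for both functions, and $\dom\widetilde f=\dom f$.

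To compare the ball problems, fix a center $x\in\dom f$ and a radius $t$. The ball then meets the common domain, so every minimizer of either function over $\B(x,t)$ lies in $\dom f$. A point $u\in\B(x,t)\cap\dom f$ minimizes $f$ over the ball exactly when $f(u)\le f(v)$ for all $v$ in the ball. By the equivalence above, this holds exactly when $\widetilde f(u)\le\widetilde f(v)$ for all such $v$. So the minimizer sets coincide. The same argument applied to the whole space gives $\argmin f=\argmin\widetilde f$, and the infimum of one is attained exactly when that of the other is.

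Consequently terminal steps coincide, and under identical selections the trajectories agree step by step, by induction. The termination counts $N_t$ therefore agree.

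\textbf{Main obstacle.} The only delicate point is bookkeeping rather than analysis, and it concerns part (ii). Because $\chi$ is only defined on the range of $f$, the comparison must stay inside $\dom f$, which is why we use that the ball at a feasible center always contains a feasible point. Convexity and closedness of $\widetilde f$ are assumed, not derived, so they require no separate argument.
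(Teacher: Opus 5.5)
Your proposal is correct and follows essentially the same route as the paper. In (i) the similarity $y\mapsto a+sQy$ maps $\B(y,t)$ onto $\B(a+sQy,st)$, and substitution identifies the minimizer sets. In (ii) strict monotonicity of $\chi$ preserves and reflects the order of values on $\dom f$, while the feasible center excludes minimizers outside the domain. Your extra bookkeeping (domains, solution sets, and the induction with matching terminal selections) is consistent with the paper's argument and only slightly more explicit.
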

The proof is given in \Cref{app:reformulations}. Positive affine changes of the objective are a special case of part~(ii). Although the exact ball calls are preserved, function gaps, subgradient norms, and hence radius rules based on them need not be. 
A general nonsingular linear change of variables falls outside this invariance: it maps Euclidean balls to ellipsoids and therefore does not preserve the same \BPM{} radius geometry.

\subsection{Epigraphical lifting and induced radii}
For $f$ satisfying \assref{ass:convex}, define on $\R^{d+1}$
\begin{equation}\label{eq:epigraph-objective}
 F(x,s)\coloneq s+\delta_{\operatorname{epi}f}(x,s),\qquad
 \operatorname{epi}f=\{(x,s):f(x)\le s\},
\end{equation}
where $\delta_C$ denotes the indicator function of a set $C$.
Then $F$ is proper, closed, and convex, with $\inf F=\inf f$ and, under attainment, $\argmin F=\X\times\{\fs\}$.
We apply \BPM{} to $F$ using Euclidean balls in $\R^{d+1}$, initialized at the graph point $z^0=(x^0,f(x^0))$ with $x^0\in\dom f$. 
This initialization therefore requires the value $f(x^0)$.

\begin{theorem}[Epigraphical step correspondence]
\label{thm:epigraph-steps}
Every \BPM{} iterate on $F$ from the above initialization remains on the graph: let $z^k=(x^k,f(x^k))$.
At a nonterminal radius-$t_k$ step, define
\[
 \delta_k=f(x^k)-f(x^{k+1}),\qquad
 r_k=\norm{x^{k+1}-x^k}.
\]
Then $\delta_k>0$, $r_k>0$, and
\begin{equation}\label{eq:epigraph-radius}
 t_k^2=r_k^2+\delta_k^2,\qquad
 x^{k+1}\in\brox_f^{r_k}(x^k).
\end{equation}
Conversely, any nonterminal radius $r$ \BPM{} step $x\mapsto u$ for $f$ lifts to an exact step for $F$ from $(x,f(x))$ with radius $\sqrt{r^2+(f(x)-f(u))^2}$.
Moreover, if
$G^k=c_k(z^k-z^{k+1})\in\partial F(z^{k+1})$,
$c_k>0$, is a radial subgradient for the lifted step, then
\begin{equation}\label{eq:epigraph-certificate}
 \lambda_k=1-c_k\delta_k\in(0,1),\qquad
 g^k=\frac{c_k}{\lambda_k}(x^k-x^{k+1})\in\partial f(x^{k+1}),
 \qquad
 G^k=(\lambda_kg^k,1-\lambda_k).
\end{equation}
At a terminal lifted step, $x^{k+1}\in\X$, and the projected output is an exact \BPM{} output for $f$ with radius
$\sqrt{t_k^2-\Delta_k^2}\ge D_k$.
Conversely, a terminal radius $r$ step for $f$ lifts to a terminal step for $F$ with radius
$\sqrt{r^2+\Delta_k^2}$.
These terminal correspondences hold for any optimal output selected by the terminal step.
\end{theorem}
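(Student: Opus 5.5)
The plan is an induction on $k$, driven by the projection characterization of the lifted step. By view (iv) of \Cref{prop:four-views}, a nonterminal step of $F$ is the projection of $z^k$ onto the sublevel set $\{F\le \sigma\}=\{(v,s):f(v)\le s\le\sigma\}$, where $\sigma$ is the attained lifted value; this projection lies at distance exactly $t_k$ from $z^k$.

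\textbf{Graph preservation and positivity.} Assume $z^k=(x^k,f(x^k))$ lies on the graph. Any lifted minimizer $(v,s)$ has value $s$, and must satisfy $s=f(v)$. If instead $f(v)<s$, then since $f(x^k)\ge s>f(v)$, the point $(v,f(v))$ is strictly closer to $z^k$ than $(v,s)$. Hence some point $(v,s')$ with $s'<s$ and $s'\ge f(v)$ stays in the ball and in the epigraph. This contradicts minimality in $s$. So the iterate stays on the graph. For a nonterminal step, $\delta_k>0$, since otherwise $z^k$ itself would be optimal in its ball. Also $r_k>0$: if $r_k=0$, then $x^{k+1}=x^k$ and $f(x^{k+1})=f(x^k)$, so $\delta_k=0$. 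The identity $t_k^2=r_k^2+\delta_k^2$ follows from $\norm{z^k-z^{k+1}}=t_k$ in \eqref{eq:radial}.

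\textbf{Step correspondence via subgradients.} The radial subgradient $G^k=c_k(z^k-z^{k+1})$ has components $(c_k(x^k-x^{k+1}),\,c_k\delta_k)$. Write $\partial F(v,s)$ at a graph point as the normal cone to the epigraph plus $(0,1)$. The normal cone at $(v,f(v))$ consists of $\mu(g,-1)$ with $\mu\ge0$ and $g\in\partial f(v)$, together with horizon directions; the latter must be ruled out. This gives $G^k=(\mu g,1-\mu)$, hence $1-\mu=c_k\delta_k$ and $\lambda_k=\mu=1-c_k\delta_k$.

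We need $\lambda_k\in(0,1)$. Positivity of $c_k\delta_k$ gives $\lambda_k<1$. $\lambda_k>0$ holds because $\lambda_k=0$ would force $c_k(x^k-x^{k+1})=0$, contradicting $r_k>0$. This uses the horizontal component of $G^k$ being $\lambda_k g$ when $\lambda_k>0$; in the degenerate $\lambda_k=0$ case the component lies in the normal cone to $\dom f$, and must be handled via the horizon subgradient. With $\lambda_k>0$, $g^k=(c_k/\lambda_k)(x^k-x^{k+1})\in\partial f(x^{k+1})$ is a positive multiple of $x^k-x^{k+1}$, and $\norm{x^k-x^{k+1}}=r_k$. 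By the equivalence \eqref{eq:view-system} (sufficiency direction), $x^{k+1}\in\brox_f^{r_k}(x^k)$.

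\textbf{Converse.} For a nonterminal step $x\mapsto u$ with radial subgradient $g=c(x-u)$, try $\lambda\in(0,1)$ and set $G=(\lambda g,1-\lambda)\in\partial F(u,f(u))$. Choose $\lambda$ so that $G$ is a positive multiple of $(x-u,\,f(x)-f(u))$. This requires $\lambda c/(1-\lambda)=1/\delta$ with $\delta=f(x)-f(u)>0$, which always has a solution in $(0,1)$. Sufficiency in \eqref{eq:view-system} applied to $F$, with radius $\sqrt{r^2+\delta^2}$, then gives the lifted step.

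\textbf{Terminal case.} On a terminal lifted step, the output lies in $\argmin F=\X\times\{\fs\}$ and is within $t_k$ of $z^k$. So $\norm{x^{k+1}-x^k}^2+\Delta_k^2\le t_k^2$, and $x^{k+1}$ is a minimizer in $\B(x^k,\sqrt{t_k^2-\Delta_k^2})$. That radius is at least $D_k$, since the ball contains $x^{k+1}\in\X$. The converse is the same computation in reverse.

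The main obstacle is the normal-cone description of $\partial F$ at graph points, in particular excluding $\lambda_k=0$ when $x^{k+1}$ lies on the boundary of $\dom f$. I would first try to avoid the normal cone entirely. The idea is to argue via \eqref{eq:view-ball} directly: any $v\in\B(x^k,r_k)$ with $f(v)<f(x^{k+1})$ could be mixed with $x^{k+1}$ to produce a lifted feasible point in $\B(z^k,t_k)$ of lower height. Establishing this by convexity along the segment would also give the brox relation without using $G^k$.
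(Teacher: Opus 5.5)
\textbf{Gap 1: graph preservation.} This step fails as written, and the inequality in it is reversed. Since $z^k$ is feasible, the lifted value satisfies $s\le f(x^k)$. So if $f(v)<s$, then $f(x^k)-f(v)>f(x^k)-s\ge0$. The point $(v,f(v))$ is therefore strictly \emph{farther} from $z^k=(x^k,f(x^k))$ than $(v,s)$, not closer.

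On a nonterminal step the lifted output lies on the sphere $\norm{z^k-(v,s)}=t_k$. Hence every $(v,s')$ with $s'<s$ lies outside the lifted ball, and pure vertical descent is never feasible. Your argument is the one for the cylindrical formulation (vii). It does not transfer to a Euclidean ball in $\R^{d+1}$, where lowering the height consumes radius.

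A correct elementary argument has to move horizontally first. Suppose $f(v)<s$ and $v\ne x^k$. For small $\theta>0$, convexity keeps $(v+\theta(x^k-v),s)$ in $\operatorname{epi}f$ with $f(v+\theta(x^k-v))<s$, and this point lies strictly inside the ball. You can then lower the height slightly, contradicting optimality. The case $v=x^k$ is impossible, since $f(x^k)\ge s>f(v)$. The paper takes a different route. It uses the subgradient description at points strictly above the graph: every subgradient of $F$ there has the form $(w,1)$ with $w\in N_{\dom f}(u)$. The radial subgradient then forces $c(x^k-u)\in N_{\dom f}(u)$.

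\textbf{Gap 2: excluding $\lambda_k=0$.} This is the obstacle you flagged, and it is left open. Your claim that $\lambda_k=0$ ``would force $c_k(x^k-x^{k+1})=0$'' is not correct; it only gives $c_k(x^k-x^{k+1})\in N_{\dom f}(x^{k+1})$. The missing idea is a one-liner, and it also closes Gap 1 via the paper's route. Test the normal-cone inequality at the center itself: since $x^k\in\dom f$, we get $\ip{c_k(x^k-x^{k+1})}{x^k-x^{k+1}}\le0$, i.e.\ $c_kr_k^2\le0$, contradicting $r_k>0$. No horizon-subgradient machinery is needed.

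Your mixing fallback would at most give the brox relation. It would not give the representation \eqref{eq:epigraph-certificate} for an arbitrary radial $G^k$, so $\lambda_k=0$ must be excluded anyway. Naive mixing along the segment from $(x^{k+1},f(x^{k+1}))$ to $(v,f(v))$ also runs into the same issue: lowering the height increases the distance to $z^k$. It works only if you lower the height by a small fraction of the available decrease.

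The remaining parts agree with the paper's proof: the identity $t_k^2=r_k^2+\delta_k^2$, the brox relation via sufficiency of \eqref{eq:view-system}, the converse with $\lambda=(1+c\delta)^{-1}$, and the terminal correspondences.
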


The proof is given in \Cref{app:epigraph-steps} and allows extended-valued $f$. 
The theorem shows that epigraphical lifting preserves projected \BPM{} trajectories after an induced change of radii, but does not preserve a prescribed radius schedule.
This differs from the cylindrical epigraph formulation in \eqref{eq:view-epigraph-cylinder}. 
There, only the horizontal displacement is constrained, $\norm{v-x}\le t$, whereas a Euclidean ball in the lifted space imposes $\norm{v-x}^2+(s-f(x))^2\le t^2$.
Thus part of the lifted radius is spent in the vertical direction, and the projected nonterminal radius is generally strictly smaller than $t$.
Initialization on the graph is essential. 
For example, if $f\equiv0$ and $z^0=(0,2)$, the lifted method may take purely vertical nonterminal steps even though the projected point is already optimal.

For the same center and radius $t$, the projected lifted output is feasible for the \BPM{} subproblem on $f$. 
Hence \BPM{} attains an objective value no larger than the projected lifted step. 
This comparison applies only to the current step. The two methods generally produce different next centers, so it does not imply an ordering of their subsequent trajectories.

\subsection{Convergence guarantees under epigraphical lifting}

Under attainment, define the lifted distance to the solution set
$R_k \coloneq \dist(z^k,\argmin F)$. 
Since $z^k=(x^k,f(x^k))$ and $\argmin F=\X\times\{\fs\}$,

\begin{equation}
\label{eq:epigraph-distance}
F(z^k)-\inf F\overset{\eqref{eq:epigraph-objective}}{=}\Delta_k,\qquad
R_k^2=D_k^2+\Delta_k^2.
\end{equation}

Hence the general \BPM{} bounds apply to $F$ with dimension $d+1$, initial distance $R_0$, and the radii used in the lifted space. 
In particular, for a constant lifted radius $t$, every sequence of nonterminal steps satisfies

\begin{equation}
\label{eq:epigraph-value}
\Delta_K\overset{\eqref{eq:jensen}}{\le}\Delta_0\exp\!\left(-\frac{2K^2t^2}{R_0^2}\right),\qquad
N_t^{\rm epi}\overset{\eqref{eq:squared-count}}{\le}\left\lceil \frac{R_0^2}{t^2}\right\rceil.
\end{equation}

Here $N_t^{\rm epi}$ denotes the termination count for lifted \BPM{}. 
In particular,
$R_0=\sqrt{D_0^2+\Delta_0^2}$, so these bounds depend on the lifted distance rather than on $D_0$ alone.

\begin{proposition}[Stationarity and Bregman-distance relations]
\label{prop:epigraph-measures}
For every $x\in\dom f$,
\begin{equation}\label{eq:epigraph-stationarity}
 \dist(0,\partial F(x,f(x))) = \frac{m(x)}{\sqrt{1+m(x)^2}},
\end{equation}
where the right-hand side is interpreted as one if $\partial f(x)=\varnothing$.
Consequently, a lifted stationarity bound gives
\[
 \dist(0,\partial F(x,f(x)))\le\varepsilon<1
 \quad\Longrightarrow\quad
 m(x)\le\frac{\varepsilon}{\sqrt{1-\varepsilon^2}}.
\]
By contrast, the bound
$\dist(0,\partial F(x,f(x)))\le1$
alone gives no finite upper bound on $m(x)$.
Under attainment, fix $x^\star\in\X$. 
At a nonterminal paired output $(u,f(u))$, let $G=(\lambda g,1-\lambda)$ be the paired subgradient from \eqref{eq:epigraph-certificate}. 
Then
\begin{equation}\label{eq:epigraph-dual-gap}
 \ip G{(u,f(u))-(x^\star,\fs)}
 =
 \lambda\mathcal G_{x^\star}(u,g)
 +(1-\lambda)(f(u)-\fs).
\end{equation}
The primal gap is unchanged,
\[
 F(u,f(u))-\inf F=f(u)-\fs,
\]
and the complementary Bregman distance satisfies
\[
 D_F^G((x^\star,\fs),(u,f(u)))
 =
 \lambda D_f^g(x^\star,u).
\]
\end{proposition}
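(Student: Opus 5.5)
The plan is to compute the subdifferential of $F=s+\delta_{\operatorname{epi}f}$ at a graph point explicitly, and then read off both claims from that description. Since the constraint set and the linear term live in different places, I will use
\[
\partial F(x,f(x))=(0,1)+N_{\operatorname{epi}f}(x,f(x)).
\]
This sum rule is exact because $s$ is linear and finite everywhere. The normal cone to the epigraph at a graph point is the standard one:
\[
N_{\operatorname{epi}f}(x,f(x))=\{\mu(g,-1):\mu>0,\ g\in\partial f(x)\}\cup\bigl(N_{\dom f}(x)\times\{0\}\bigr).
\]
The second piece is contained in the recession/horizon part. For a closed proper convex $f$ one only needs that every normal $(v,-\mu)$ with $\mu>0$ gives $v/\mu\in\partial f(x)$, and that $\mu\ge0$ always. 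Both follow directly from testing the normal inequality against the points $(x,f(x)+1)$ and $(y,f(y))$. Hence every element of $\partial F(x,f(x))$ has the form $(v,1-\mu)$ with $\mu\ge0$. If $\mu>0$ then $v=\mu g$ for some $g\in\partial f(x)$; if $\mu=0$ then $v$ is a horizon normal.

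For \eqref{eq:epigraph-stationarity}, I minimize $\norm{(\mu g,1-\mu)}^2=\mu^2\norm g^2+(1-\mu)^2$ over $\mu\ge0$ and $g\in\partial f(x)$. For fixed $g$ the optimal value is $\norm g^2/(1+\norm g^2)$, attained at $\mu=1/(1+\norm g^2)$. This value is increasing in $\norm g$, so the infimum is attained at $\norm g=m(x)$; the infimum is actually attained since $\partial f(x)$ is closed. The candidates with $\mu=0$ have norm at least $1$, which is no smaller, so they do not change the minimum. If $\partial f(x)=\varnothing$, only those candidates remain. One of them is $(0,1)$ itself, so the distance equals $1$, matching the stated convention.

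The consequences then follow quickly. Inverting $\varepsilon\ge m/\sqrt{1+m^2}$ gives the stated bound on $m(x)$. For the claim that a bound of $1$ gives no control, I will take $f(x)=-\log x$ (or $\sqrt{\cdot}$-type examples) and let $m(x)\to\infty$: the distance stays below $1$ while $m(x)$ is unbounded.

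For the Bregman identities, I write $z=(u,f(u))$, $z^\star=(x^\star,\fs)$, and $G=(\lambda g,1-\lambda)$ from \eqref{eq:epigraph-certificate}. Expanding gives
\[
\ip{G}{z-z^\star}=\lambda\ip{g}{u-x^\star}+(1-\lambda)(f(u)-\fs).
\]
Applying \eqref{eq:pd-gap} to the first term yields \eqref{eq:epigraph-dual-gap}. The primal-gap identity is just \eqref{eq:epigraph-distance}. For the complementary distance, $z^\star$ lies in the epigraph, so the indicator term vanishes and
\[
D_F^G(z^\star,z)=\fs-f(u)-\lambda\ip g{x^\star-u}-(1-\lambda)(\fs-f(u))=\lambda\bigl[\fs-f(u)-\ip g{x^\star-u}\bigr]=\lambda D_f^g(x^\star,u).
\]

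The only delicate step is the normal-cone description for extended-valued $f$ at boundary points of $\dom f$, where horizontal normals appear. I handle it by noting that those normals only add candidates of norm at least $1$, which never affect the minimum in \eqref{eq:epigraph-stationarity}.
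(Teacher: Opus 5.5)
Your proposal is correct and follows essentially the same route as the paper. The paper describes $\partial F(x,f(x))$ in the same way, as the vectors $(\lambda g,1-\lambda)$ with $\lambda>0$ and $g\in\partial f(x)$ together with the vectors $(v,1)$ for $v\in N_{\dom f}(x)$, obtained by testing vertical and graph points, and then minimizes $\lambda^2m(x)^2+(1-\lambda)^2$ over $\lambda\ge0$; you only swap the order of minimization over $g$ and $\lambda$. For the Bregman identity, the paper subtracts the primal gap from \eqref{eq:epigraph-dual-gap}, which is equivalent to your direct expansion, and your explicit example for the unbounded-$m(x)$ claim fills in a detail the paper leaves implicit.
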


The proof is given in \Cref{app:epigraph-measures}. Applying the earlier \BPM{} bounds to $F$ gives bounds in terms of the lifted distance $R_k$, related to the original quantities by \eqref{eq:epigraph-distance}. Stationarity and primal--dual quantities are instead related by \eqref{eq:epigraph-stationarity} and \eqref{eq:epigraph-dual-gap}.
The epigraphical lift also does not preserve additional regularity, $F$ is extended-valued and affine along feasible vertical rays, so smoothness or strong convexity of $f$ does not carry over to $F$.

\begin{theorem}[Summability and length bounds]
\label{thm:epigraph-budgets}
For an infinite nonterminal lifted iteration initialized at $z^0=(x^0,f(x^0))$, the lifted radii $t_k$ and the corresponding projected radii $r_k$ from \eqref{eq:epigraph-radius} satisfy
\[
 \sum\limits_{k=0}^{\infty} t_k<\infty
 \quad\Longleftrightarrow\quad
 \sum\limits_{k=0}^{\infty} r_k<\infty.
\]
Consequently, for any prescribed positive lifted radius sequence $(t_k)$, the convergence alternatives of \Cref{thm:general-convergence} also hold for the projected iterates: if $\sum_k t_k=\infty$, finite termination occurs when $f$ attains its minimum, while otherwise $f(x^k)\downarrow\inf f$ and $\norm{x^k}\to\infty$.
If $\sum_k t_k<\infty$, then $x^k\to\bar x\in\dom f$, and $f(x^k)\downarrow f(\bar x)$, where $\bar x$ need not be optimal.
Under attainment and a nonoptimal start, every sequence of nonterminal steps satisfies
\begin{equation}\label{eq:epigraph-length-budget}
 \sum\limits_{k=0}^{K-1}t_k
 \le
 2C_dD_0+\Delta_0-\Delta_K
 <
 2C_dD_0+\Delta_0.
\end{equation}
Hence, for a constant lifted radius $t$,
\begin{equation}\label{eq:epigraph-count}
 N_t^{\rm epi}\le\min\left\{
 \left\lceil\frac{D_0^2+\Delta_0^2}{t^2}\right\rceil,
 \left\lceil\frac{2C_dD_0+\Delta_0}{t}\right\rceil
 \right\}.
\end{equation}
If the nearest-minimizer selection at termination of \Cref{def:projected} is used, the total lifted trajectory length is at most
$2C_dD_0+\Delta_0$.
\end{theorem}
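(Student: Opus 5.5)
The plan is to obtain everything from the step correspondence of \Cref{thm:epigraph-steps}, which gives $t_k^2=r_k^2+\delta_k^2$ with $\delta_k=f(x^k)-f(x^{k+1})>0$ and $x^{k+1}\in\brox_f^{r_k}(x^k)$ at each nonterminal lifted step. Two elementary inequalities follow: $r_k\le t_k$, and $t_k\le r_k+\delta_k$.

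\emph{Summability equivalence.} If $\sum t_k<\infty$, then $\sum r_k<\infty$ because $r_k\le t_k$. Conversely, suppose $\sum r_k<\infty$ on an infinite nonterminal run. The projected sequence is an exact \BPM{} run for $f$ with radii $r_k$ and no terminal step. By \Cref{thm:arbitrary-radii} (summable case, attainment not needed), $x^k\to\bar x\in\dom f$ and $f(x^k)\downarrow f(\bar x)>-\infty$. Hence $\sum\delta_k=f(x^0)-f(\bar x)<\infty$, so $\sum t_k\le\sum(r_k+\delta_k)<\infty$.

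\emph{Convergence alternatives.} These transfer either by applying \Cref{thm:general-convergence} directly to $F$ in $\R^{d+1}$, or through the projected run with radii $r_k$.
\begin{itemize}[leftmargin=*]
\item If $\sum t_k=\infty$ and $f$ attains its minimum, then $F$ attains its minimum, and part (ii) for $F$ gives finite termination; the lifted iterates are graph points, so the projected point lies in $\X$.
\item If $\sum t_k=\infty$ and no minimizer exists, part (iii) for $F$ gives $F(z^k)=f(x^k)\downarrow\inf F=\inf f$ and $\norm{z^k}\to\infty$. To obtain $\norm{x^k}\to\infty$ I would argue instead on the projected run. The equivalence just proved shows $\sum r_k=\infty$ (an infinite run with $\sum t_k=\infty$). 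Then part (iii) of \Cref{thm:general-convergence}, applied to $f$ with radii $r_k$, gives $\norm{x^k}\to\infty$ directly.
\item If $\sum t_k<\infty$, then $\sum r_k<\infty$, and part (i) for $f$ gives $x^k\to\bar x$ and $f(x^k)\downarrow f(\bar x)$. The same conclusion also covers runs that terminate.
\end{itemize}

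\emph{Length budget.} On a prefix of $K$ nonterminal lifted steps, the projected steps are nonterminal \BPM{} steps for $f$ with radii $r_k$. Then \eqref{eq:length-budget} gives $\sum_{k<K} r_k\le 2C_dD_0$, and telescoping the $\delta_k$ gives
\[
\sum_{k<K}t_k\le\sum_{k<K}(r_k+\delta_k)\le 2C_dD_0+\Delta_0-\Delta_K .
\]
Strict inequality holds because $\Delta_K>0$ on a nonterminal prefix.

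\emph{Step count.} For the count \eqref{eq:epigraph-count}, the first bound is \eqref{eq:epigraph-value} combined with $R_0^2=D_0^2+\Delta_0^2$. For the second, the $N-1$ nonterminal steps give $(N-1)t<2C_dD_0+\Delta_0$. This yields $N\le\lceil(2C_dD_0+\Delta_0)/t\rceil$; the ceiling form follows from the strict inequality.

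\emph{Total length.} Under nearest-minimizer selection, the terminal lifted displacement is to $(p,\fs)$ with $p=\Proj_{\X}(x^{N-1})$, and its length is at most $D_{N-1}+\Delta_{N-1}$. The preterminal self-contraction argument extends to the full projected trajectory ending at the nearest minimizer, exactly as in \Cref{cor:tail-length}: $\sum\norm{x^{k+1}-x^k}\le 2C_dD_0$. Lifted segment lengths are at most horizontal plus vertical displacements, and the vertical parts telescope to $\Delta_0$.

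\emph{Main obstacle.} I expect the main obstacle to be this last step. I must check that \Cref{cor:tail-length} covers the projected trajectory including the terminal displacement, i.e.\ that appending the nearest minimizer preserves self-contraction and the $2C_dD_0$ bound. A second check is that the induced terminal projected radius, which may be variable, causes no issue. I would verify this using \eqref{eq:future-fejer} with $z=p$ and the fact that $f(p)\le f(x^{j+1})$.
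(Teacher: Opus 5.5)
Your proposal is correct and follows essentially the same route as the paper. The shared steps are the sandwich $r_k\le t_k\le r_k+\delta_k$, transferring the convergence alternatives through the projected run with radii $r_k$, the budget from \eqref{eq:length-budget} plus telescoping of the $\delta_k$, and \Cref{cor:tail-length} together with the total vertical variation $\Delta_0$ for the full lifted length. The differences are cosmetic. For example, in the converse summability direction you get the finite limit of $f(x^k)$ from \Cref{thm:arbitrary-radii}, whereas the paper argues directly from compactness and lower semicontinuity of $f$.
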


The proof is given in \Cref{app:epigraph-budgets}. 
The bound \eqref{eq:epigraph-length-budget} combines the projected trajectory-length bound \eqref{eq:length-budget} with the total decrease in objective value, which is why the term $2C_dD_0$ depends on the original dimension $d$ rather than the lifted dimension $d+1$.

\begin{example}[Fixed-radius counts under objective scaling]
\label{ex:epigraph-scaling}
Let $f(x)=a|x|$, $a>0$, and $x^0=D_0>0$. 
Ordinary \BPM{} has $N_t=\lceil D_0/t\rceil$. 
The lifted trajectory initialized at $(D_0,f(D_0))$ follows the line segment $s=ax$ from $(D_0,aD_0)$ to $(0,0)$, so
\[
 N_t^{\rm epi}
 =\left\lceil\frac{\sqrt{1+a^2}\,D_0}{t}\right\rceil.
\]
Hence $N_t^{\rm epi}/N_t$ can grow arbitrarily large as $a\to\infty$. 
\end{example}
Positive objective scaling leaves fixed-radius \BPM{} unchanged by \Cref{prop:oracle-transformations}, but changes the geometry of the epigraph lift. 
Thus it can change the lifted fixed-radius termination count by an arbitrarily large factor.
The verification is given in \Cref{app:epigraph-scaling}.

\section{Adaptive radius selection}
\label{sec:calibration}
The choice of radius can substantially affect the behavior of \BPM{}. 
A larger radius can produce a better first objective value yet require more iterations to reach a minimizer: \Cref{prop:radius-nonmonotone} gives $0<s<t$ with $N_s=2$ but $N_t=3$.
For geometric schedules $t_k=t\rho^k$ with $0<t<D_0$, \Cref{thm:geometric-factor} identifies the smallest successful factor $\rho_\star$.
At this factor the trajectory converges to a minimizer without reaching one in finitely many iterations, whereas any smaller factor leaves a nonoptimal limit.
These results show why radius selection must account for the trajectory that the radii generate.

We therefore study adaptive rules that choose the radius using information available at the current iterate, specifically subgradient or objective gap information. The guarantees below require no smoothness, strong convexity, or growth condition. 
Throughout, \assref{ass:convex}, \assref{ass:attained}, and \assref{ass:iteration} hold.

\subsection{Subgradient-based radius selection}

If $t_k\ge\theta D_k$ for some fixed $0<\theta<1$, \Cref{cor:squared,thm:segment} give

\[
D_{k+1}\overset{\eqref{eq:lower-fejer}}{\le}\sqrt{1-\theta^2}D_k,\qquad
\Delta_{k+1}\overset{\eqref{eq:segment}}{\le}(1-\theta)\Delta_k.
\]

Thus a radius comparable to the current distance to the solution set gives geometric contraction in both distance and objective gap. 
Since $D_k$ is generally unavailable, we next consider an adaptive rule based on an available subgradient. 
Its convergence rate follows by relating the resulting \BPM{} steps to the proximal-point method \citep{Rockafellar1976,TaylorHendrickxGlineur2017}.

\begin{theorem}[Subgradient-based radius selection]
\label{thm:certificate-calibration}
Suppose $x^0\in\dom\partial f$. At iteration $k$, choose an available
$h^k\in\partial f(x^k)$. If $h^k=0$, stop; otherwise set
$t_k=\tau\norm{h^k}$ for some $\tau>0$.
For the first $K\ge1$ iterations, all assumed nonterminal, let $g^k$ be a radial
subgradient and let $\lambda_{k+1}=t_k/\norm{g^k}$ be the corresponding
proximal parameter from \eqref{eq:prox-equivalence}. Then, for $k<K$,
\[
 \lambda_{k+1}\ge\tau,\qquad
 \Delta_K
 \le\frac{D_0^2}{4\sum\limits_{k=0}^{K-1}\lambda_{k+1}}
 \le\frac{D_0^2}{4\tau K}.
\]
Under the stopped convention of \assref{ass:iteration}, the final bound
remains valid after termination.
\end{theorem}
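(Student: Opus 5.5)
The plan is to reduce the statement to the classical proximal-point rate with variable parameters. The ingredients are \Cref{lem:certificate}, the monotonicity in \Cref{prop:certificate-monotone}, and the known bound of \citet{TaylorHendrickxGlineur2017}.

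First, by \eqref{eq:prox-equivalence}, each nonterminal step is $x^{k+1}=\prox_{\lambda_{k+1}f}(x^k)$ with $\lambda_{k+1}=t_k/\norm{g^k}$ and $g^k\in\partial f(x^{k+1})$. For the lower bound $\lambda_{k+1}\ge\tau$ we must show $\norm{g^k}\le\norm{h^k}$. I would argue directly from monotonicity of $\partial f$. Since $h^k\in\partial f(x^k)$, $g^k\in\partial f(x^{k+1})$, and $x^k-x^{k+1}=t_k g^k/\norm{g^k}$, we get
\[
\ip{h^k-g^k}{x^k-x^{k+1}}\ge0 .
\]
Dividing by $t_k/\norm{g^k}>0$ gives $\ip{h^k}{g^k}\ge\norm{g^k}^2$, and Cauchy--Schwarz then yields $\norm{g^k}\le\norm{h^k}$. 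This is the same computation as in \eqref{eq:acute}, but with $h^k$ an arbitrary subgradient at the center rather than a radial one. Hence $\lambda_{k+1}=\tau\norm{h^k}/\norm{g^k}\ge\tau$.

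Second, for the rate I would invoke the proximal-point bound for arbitrary positive stepsizes, $f(x^K)-\fs\le D_0^2/(4\sum_{k}\lambda_{k+1})$. This is the sharp value bound of \citet{TaylorHendrickxGlineur2017}, which improves Güler's factor $1/2$ to $1/4$. Since $D_0$ is a distance to the solution set, I would apply the bound with $x^\star=\Proj_{\X}(x^0)$, so $\norm{x^0-x^\star}=D_0$. If I have to justify the $1/4$ constant rather than cite it, that is the main obstacle. I would then try a Lyapunov/telescoping argument. Using $\lambda_{k+1}g^k=x^k-x^{k+1}$, the nonincreasing values, and the inequalities $f(x^{k+1})\le f(z)-\ip{g^k}{z-x^{k+1}}$ applied at $z=x^\star$ and $z=x^{k}$, I would form a weighted combination with weights growing like the partial sums $\Lambda_k=\sum_{i\le k}\lambda_i$. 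The goal is to show that
\[
\Lambda_k^2\Delta_k+\tfrac14\norm{x^k-x^\star}^2\cdots
\]
is nonincreasing. As a fallback, the plain estimate with constant $1/2$ follows from \eqref{eq:identity} summed with weights $\lambda_{k+1}$ plus monotonicity of $\Delta_k$.

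Finally, the bound $\sum_k\lambda_{k+1}\ge\tau K$ gives the last inequality. For the stopped convention, if a terminal step occurs before $K$, then $\Delta_K=0$ and the bound holds trivially. The subgradient bound on earlier steps is not needed in that case.
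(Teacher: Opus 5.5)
Your proposal is correct and follows essentially the same route as the paper. Monotonicity of $\partial f$ between $h^k\in\partial f(x^k)$ and $g^k\in\partial f(x^{k+1})$ gives $\norm{g^k}\le\norm{h^k}$, hence $\lambda_{k+1}\ge\tau$; the cited variable-stepsize proximal-point bound of Taylor, Hendrickx, and Glineur (Theorem~4.1) together with $\sum_{k<K}\lambda_{k+1}\ge\tau K$ then gives the rate, and $\Delta_K=0$ covers the stopped case. The Lyapunov sketch and the constant-$1/2$ fallback are unnecessary, since the paper also simply cites the sharp $1/4$ estimate.
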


The proof is given in \Cref{app:general-calibration}. 
The rule is implementable whenever a subgradient is available at the current iterate. 
In particular, the radial subgradient from \Cref{lem:certificate} can be reused at the next iteration if the oracle returns it. 
A zero subgradient provides a stopping certificate, whereas a nonzero subgradient may still be returned at a nonsmooth minimizer. 
The constant $1/4$ is inherited from the proximal point estimate of \citet[Theorem~4.1]{TaylorHendrickxGlineur2017} and is also sharp for this radius rule: the one-dimensional absolute-value example in \Cref{app:general-calibration} attains equality in both bounds of \Cref{thm:certificate-calibration}.

\subsection{Objective gap based radius selection}
If $\fs$ is known, the current objective gap can be used directly to choose the radius. For fixed $\tau>0$ and $\alpha>0$, set $t_k=\tau\Delta_k^\alpha$ whenever $\Delta_k>0$.

\begin{theorem}[Objective gap based radius selection]
\label{thm:gap-power}
Suppose $\Delta_0>0$ and, at each nonoptimal iterate, choose $t_k=\tau\Delta_k^\alpha$. 
Then the resulting \BPM{} iterates satisfy
\begin{equation}\label{eq:gap-power-rate}
 \Delta_K\le\left(\Delta_0^{-\alpha}+\frac{\alpha\tau K}{D_0}\right)^{-1/\alpha}.
\end{equation}
Consequently, for $0<\varepsilon<\Delta_0$, it suffices to take
\[
 K\ge\left\lceil\frac{D_0}{\alpha\tau}
 (\varepsilon^{-\alpha}-\Delta_0^{-\alpha})\right\rceil
\]
to obtain $\Delta_K\le\varepsilon$.
\end{theorem}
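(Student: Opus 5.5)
We combine the segment contraction of \Cref{thm:segment} with the monotonicity of distances, and then compare the resulting recursion with an ODE in the usual way.

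\textbf{Step 1: one-step recursion.} At a nonterminal step with $\Delta_k>0$, \eqref{eq:segment} gives
\[
\Delta_{k+1}\le\Bigl(1-\min\{t_k/D_k,1\}\Bigr)\Delta_k .
\]
By \Cref{cor:squared}, $D_k\le D_0$ along the stopped sequence, so $t_k/D_k\ge t_k/D_0$. With $t_k=\tau\Delta_k^\alpha$ this yields
\[
\Delta_{k+1}\le\Delta_k\Bigl(1-\frac{\tau\Delta_k^\alpha}{D_0}\Bigr)
\]
whenever $\tau\Delta_k^\alpha<D_k$. If instead $t_k\ge D_k$, the step is terminal and $\Delta_{k+1}=0$. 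Stopped iterates have zero gap, so the claimed bound is trivial from that point on.

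\textbf{Step 2: linearize in $\Delta^{-\alpha}$.} Set $a_k=\Delta_k^{-\alpha}$ and $c=\tau/D_0$. The goal is $a_{k+1}\ge a_k+\alpha c$. From Step 1,
\[
a_{k+1}\ge a_k\bigl(1-c\Delta_k^\alpha\bigr)^{-\alpha}.
\]
Use Bernoulli's inequality in the form $(1-x)^{-\alpha}\ge 1+\alpha x$, valid for $x\in[0,1)$ and $\alpha>0$ because the left side is convex in $x$ with tangent $1+\alpha x$ at $0$. Applying it with $x=c\Delta_k^\alpha$, which lies in $[0,1)$ since $t_k<D_k\le D_0$ on a nonterminal step, gives
\[
a_{k+1}\ge a_k+\alpha c\,a_k\Delta_k^\alpha=a_k+\alpha c .
\]
Telescoping gives $a_K\ge\Delta_0^{-\alpha}+\alpha\tau K/D_0$, which is \eqref{eq:gap-power-rate} after inverting. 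Once a terminal step occurs, $\Delta_K=0$ and the bound holds trivially.

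\textbf{Step 3: complexity.} Require the right side of \eqref{eq:gap-power-rate} to be at most $\varepsilon$, that is, $\Delta_0^{-\alpha}+\alpha\tau K/D_0\ge\varepsilon^{-\alpha}$. Solving for $K$ gives the stated ceiling.

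The only delicate point is checking that the Bernoulli step applies in the right direction for every $\alpha>0$, including $\alpha>1$. This holds because convexity gives $(1-x)^{-\alpha}\ge1+\alpha x$ for all $\alpha>0$, provided the argument $x$ stays in $[0,1)$, and that is guaranteed on nonterminal steps.
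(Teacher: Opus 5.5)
Your proposal is correct and matches the paper's proof essentially step for step. On nonterminal steps, the segment contraction together with $D_k\le D_0$ gives $\Delta_{k+1}\le\Delta_k(1-\tau\Delta_k^\alpha/D_0)$ with $0<\tau\Delta_k^\alpha/D_0<1$; the inequality $(1-z)^{-\alpha}\ge 1+\alpha z$ then yields $\Delta_{k+1}^{-\alpha}\ge\Delta_k^{-\alpha}+\alpha\tau/D_0$, which telescopes, and terminal steps make the bound trivial.
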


The proof is given in \Cref{proof:calibration:1}. In particular, $\alpha=1$ gives an $O(K^{-1})$ rate, while $\alpha=1/2$ gives $O(K^{-2})$. 
These rates arise from different radius schedules, rather than from accelerating a fixed ball-oracle iteration.
For a fixed initial radius $t_0$, the rule can be written as $t_k=t_0(\Delta_k/\Delta_0)^\alpha$, so a smaller $\alpha$ decreases the radius more slowly as the objective gap closes.

\begin{proposition}[Radius selection with a lower bound]
\label{prop:gap-extensions}
Suppose the assumptions of \Cref{thm:gap-power} hold, but only a finite lower bound
$\ell\le\fs$ is known. If the radius is chosen as
$t_k=\tau(f(x^k)-\ell)^\alpha$, then the objective-gap bound
\eqref{eq:gap-power-rate} remains valid. Moreover, if $\ell<\fs$ and
$D_0>0$, a terminal step occurs within
\[
 \left\lceil
 \frac{D_0^2}{\tau^2(\fs-\ell)^{2\alpha}}
 \right\rceil
\]
iterations.
\end{proposition}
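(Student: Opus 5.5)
The objective-gap bound and the termination count can be proved separately. Both rest on the segment contraction of \Cref{thm:segment} and the squared-radius bound of \Cref{cor:squared}.

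\textbf{Objective-gap bound.} Write $\delta_k=f(x^k)-\ell\ge\Delta_k$, so that $t_k=\tau\delta_k^\alpha\ge\tau\Delta_k^\alpha$. I would first rerun the argument behind \Cref{thm:gap-power} with this larger radius. At a nonterminal step, \eqref{eq:segment} together with $D_k\le D_0$ (which follows from \Cref{cor:squared}) gives
\[
\Delta_{k+1}\le\Bigl(1-\frac{t_k}{D_0}\Bigr)\Delta_k\le\Delta_k-\frac{\tau}{D_0}\Delta_k^{1+\alpha}.
\]
Here I use $t_k/D_k\ge t_k/D_0$ and $t_k\ge\tau\Delta_k^\alpha$. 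If instead $t_k\ge D_k$, the step is terminal and $\Delta_{k+1}=0$, so the bound holds trivially.

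From this recursion I would derive the standard estimate
\[
\Delta_{k+1}^{-\alpha}\ge\Delta_k^{-\alpha}+\frac{\alpha\tau}{D_0}.
\]
Put $a=\tau\Delta_k^\alpha/D_0\in(0,1)$. Then
\[
\Delta_{k+1}^{-\alpha}\ge\Delta_k^{-\alpha}(1-a)^{-\alpha}\ge\Delta_k^{-\alpha}(1+\alpha a),
\]
using Bernoulli's inequality in the form $(1-a)^{-\alpha}\ge1+\alpha a$ for $\alpha>0$. The right-hand side equals $\Delta_k^{-\alpha}+\alpha\tau/D_0$. Summing over $k<K$ yields \eqref{eq:gap-power-rate}. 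Once the iteration terminates, $\Delta_K=0$ and the bound holds trivially.

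\textbf{Termination count for $\ell<\fs$.} Here $\delta_k\ge\fs-\ell>0$ for every $k$, so every radius satisfies $t_k\ge\tau(\fs-\ell)^\alpha=:\underline t$. By \Cref{cor:squared}, a terminal step occurs among the first $K$ iterations once $\sum_{k<K}t_k^2\ge D_0^2$. This holds as soon as $K\underline t^2\ge D_0^2$, that is, for $K=\lceil D_0^2/\underline t^2\rceil$, which is the stated count.

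The only step needing care is the inequality $\Delta_{k+1}^{-\alpha}\ge\Delta_k^{-\alpha}+\alpha\tau/D_0$, specifically that the convexity argument works for every $\alpha>0$. I will verify it by the Bernoulli route above. Alternatively, one can use the mean value theorem for $s\mapsto s^{-\alpha}$ on $[\Delta_{k+1},\Delta_k]$, which gives
\[
\Delta_{k+1}^{-\alpha}-\Delta_k^{-\alpha}\ge\alpha\Delta_k^{-\alpha-1}(\Delta_k-\Delta_{k+1})\ge\frac{\alpha\tau}{D_0}.
\]
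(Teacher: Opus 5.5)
Your proof is correct and follows essentially the same route as the paper. You use $t_k\ge\tau\Delta_k^\alpha$ in the segment contraction with $D_k\le D_0$ to recover the recurrence $\Delta_{k+1}\le\Delta_k(1-\tau\Delta_k^\alpha/D_0)$ from the proof of \Cref{thm:gap-power}, without comparing trajectories, and you get the termination count from the uniform lower bound $t_k\ge\tau(\fs-\ell)^\alpha$ via \Cref{cor:squared}. The only point you leave implicit is why $a<1$; this follows from $\tau\Delta_k^\alpha\le t_k<D_k\le D_0$ at a nonterminal step, or equivalently from $\Delta_{k+1}>0$.
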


The proof is given in \Cref{proof:calibration:2}.  
A valid lower bound keeps the radius positive at every nonoptimal point, an upper estimate need not. 
For $f(x)=x^2/2$ and $u>0$, the rule $t=\tau\max\{f(x)-u,0\}^\alpha$ gives $t=0$ whenever $f(x)\le u$, even if $x$ is not optimal.

\section{Relaxed \BPM{} iterations}
\label{sec:relaxation}

Besides choosing the radius, one can also modify how far the next center moves toward the projected \BPM{} output. 
Let $T_t$ denote the projected selection from \Cref{def:projected}. 
We consider the relaxed update
$R_{t,\lambda}(x) \coloneq x+\lambda(T_t(x)-x)$,
where $t>0$ and $0<\lambda<2$.
The convergence analysis relies on the cutter property of $T_t$ \citep{CegielskiCensor2012}. 
To continue the iteration, the relaxed center must remain in $\dom f$. This is automatic for $0<\lambda\le1$, and for all $0<\lambda<2$ when $f$ is finite-valued. 
For extended-valued objectives and $\lambda>1$, this must be checked separately.

\begin{theorem}[Convergence of relaxed \BPM{}]
\label{thm:general-relaxed}
Assume \assref{ass:convex} and \assref{ass:attained}. Fix $t>0$ and $0<\lambda<2$, and let
$x^{k+1}=R_{t,\lambda}(x^k)$ from some $x^0\in\dom f$, assuming that every iterate remains in $\dom f$. Then $x^k$ converges to a minimizer. Moreover, for every $x\in\dom f$ and $z\in\X$,
\begin{equation}\label{eq:general-relaxed-fejer}
 \norm{R_{t,\lambda}(x)-z}^2
 \le
 \norm{x-z}^2
 -\lambda(2-\lambda)\min\{t,D(x)\}^2.
\end{equation}
Once $D(x^k)\le t$, the distance to the solution set contracts as
$D(x^{k+1})\le |1-\lambda|D(x^k)$.
\end{theorem}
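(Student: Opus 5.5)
The plan is to establish the cutter (quasi-nonexpansive strengthened) property of $T_t$ and then run the standard relaxed-cutter Fejér argument in the style of \citet{CegielskiCensor2012}. The key inequality is
\[
\ip{x-T_t(x)}{z-T_t(x)}\le0\qquad\text{for all }z\in\X,\ x\in\dom f .
\]
On a nonterminal step ($D(x)>t$), this follows from \eqref{eq:cut}. The halfspace $H=\{z:\ip{x-u}{z-u}\le0\}$ contains the sublevel set $\{f\le f(u)\}$, which contains $\X$. When $D(x)\le t$, we have $T_t(x)=\Proj_{\X}(x)$, and the inequality is the projection characterization onto the closed convex set $\X$. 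In both cases $\norm{x-T_t(x)}=\min\{t,D(x)\}$ by \Cref{def:projected}.

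Given the cutter inequality, \eqref{eq:general-relaxed-fejer} is a routine expansion. Write $w=T_t(x)-x$, so that $R_{t,\lambda}(x)-z=(x-z)+\lambda w$. Then
\[
\norm{R_{t,\lambda}(x)-z}^2=\norm{x-z}^2+2\lambda\ip{w}{x-z}+\lambda^2\norm w^2 .
\]
The cutter property gives $\ip{w}{x-z}=\ip{w}{x-T_t(x)}+\ip{w}{T_t(x)-z}\le-\norm w^2$. Substituting yields the decrease $-\lambda(2-\lambda)\norm w^2$, with $\norm w=\min\{t,D(x)\}$.

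For convergence, I would argue as follows. By \eqref{eq:general-relaxed-fejer}, the sequence is Fejér monotone with respect to $\X$, hence bounded, and $\norm{x^k-z}$ is nonincreasing for each $z\in\X$. Telescoping gives $\sum_k\min\{t,D(x^k)\}^2<\infty$, so $D(x^k)\to0$. This is the one place where care is needed: one must ensure that summability of $\min\{t,D_k\}^2$ forces $D_k\to0$, which is immediate since the minimum eventually equals $D_k$. Any cluster point then lies in $\X$ by closedness of $\X$, and Fejér monotonicity with respect to $\X$ together with a cluster point in $\X$ yields convergence of the whole sequence, by the standard Opial-type argument in finite dimension.

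Finally, suppose $D(x^k)\le t$. Then $T_t(x^k)=p=\Proj_{\X}(x^k)$, and $x^{k+1}-p=(1-\lambda)(x^k-p)$. Hence $D(x^{k+1})\le\norm{x^{k+1}-p}=|1-\lambda|D(x^k)$. This also shows that the next iterate again satisfies $D\le t$, so the contraction persists at all later iterations. The domain hypothesis is used only to keep each center in $\dom f$, so that $T_t$ is defined at every iterate.
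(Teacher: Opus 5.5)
Your proposal is correct and follows essentially the same route as the paper. Both arguments use the same ingredients: the cutter inequality, obtained from the radial-subgradient halfspace on nonterminal steps and from projection onto $\X$ on terminal steps; the expansion giving the $\lambda(2-\lambda)\min\{t,D(x)\}^2$ Fej\'er decrease; summability forcing $D(x^k)\to0$; the cluster-point argument for convergence; and the direct computation $x^{k+1}-p=(1-\lambda)(x^k-p)$ for the contraction. You also observe that, since $|1-\lambda|<1$, the contraction persists at all later iterations once $D(x^k)\le t$; this is correct and a small addition to the paper's statement.
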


The proof is given in \Cref{app:general-relaxation}.
When $\lambda\ne1$, an optimal ball output does not necessarily make the next relaxed iterate optimal, so the relaxed sequence may continue after the first such output. 
For $\lambda>1$, the objective values need not be monotone.

\section{Discussion}
\label{sec:discussion}

Taken together, the results in this paper reveal two distinct roles of the radius sequence. 
In any fixed finite dimension, nonsummable radii force finite termination whenever a minimizer exists, whereas summable radii may lead to finite termination, asymptotic convergence to a minimizer, or convergence to a nonoptimal point. 
For a constant radius, the worst-case step count is linear in $D_0/t$ when the dimension is fixed, but this dimension dependence cannot be removed: as the dimension grows, the quadratic dependence on $D_0/t$ can be asymptotically attained.

Two sharpness questions remain open. 
The first concerns the dependence on dimension in the trajectory-length bound. 
\Cref{thm:hard} shows that the corresponding constant must grow at least as $\sqrt d$, while the upper bound in \Cref{lem:length} leaves a gap in its dependence on $d$. 
The second concerns the optimal worst-case objective-gap bound for \BPM{}.
The Jensen estimate \eqref{eq:jensen} is sharp for its scalar relaxation, but \Cref{rem:jensen-final-distance} shows that retaining the gap--distance relation gives a strictly smaller factor for fixed $S$ and $K$.
Determining the optimal bound for convex \BPM{} trajectories, and the asymptotic sharpness of its dependence on $S$ and $K$, remains open.

The complexity bounds in this paper are stated in terms of exact ball calls. 
Converting them into computational work would additionally require a model for solving the ball subproblems and obtaining the required subgradients.

\section*{Acknowledgments and declarations}
\paragraph{Funding.}
This work was supported by funding from King Abdullah University of Science and Technology (KAUST): i) KAUST Baseline Research Scheme, ii) Center of Excellence for Generative AI (award no. 5940).

\paragraph{Competing interests.}
The authors have no competing interests to declare that are relevant to the content of this article.

\paragraph{Use of AI tools.}
The authors used OpenAI's ChatGPT and Codex to assist with manuscript drafting and revision, mathematical exploration, and checks of proofs, calculations, and references. The authors reviewed and verified the resulting material and take full responsibility for the final manuscript.

\bibliographystyle{plainnat}
\bibliography{references}

@inproceedings{CarmonEtAlBallOracle,
  author    = {Yair Carmon and Arun Jambulapati and Qijia Jiang and Yujia Jin and Yin Tat Lee and Aaron Sidford and Kevin Tian},
  title     = {Acceleration with a Ball Optimization Oracle},
  booktitle = {Advances in Neural Information Processing Systems},
  volume    = {33},
  year      = {2020},
  url       = {https://arxiv.org/abs/2003.08078}
}

@article{CegielskiCensor2012,
  author  = {Andrzej Cegielski and Yair Censor},
  title   = {Extrapolation and Local Acceleration of an Iterative Process for Common Fixed Point Problems},
  journal = {Journal of Mathematical Analysis and Applications},
  volume  = {394},
  number  = {2},
  pages   = {809--818},
  year    = {2012},
  doi     = {10.1016/j.jmaa.2012.04.072},
  url     = {https://doi.org/10.1016/j.jmaa.2012.04.072}
}

@article{DaniilidisEtAl2015,
  author  = {Aris Daniilidis and Guy David and Estibalitz Durand-Cartagena and Antoine Lemenant},
  title   = {Rectifiability of Self-Contracted Curves in the {Euclidean} Space and Applications},
  journal = {Journal of Geometric Analysis},
  volume  = {25},
  number  = {2},
  pages   = {1211--1239},
  year    = {2015},
  url     = {https://arxiv.org/abs/1211.3227}
}

@article{Rockafellar1976,
  author  = {R. Tyrrell Rockafellar},
  title   = {Monotone Operators and the Proximal Point Algorithm},
  journal = {SIAM Journal on Control and Optimization},
  volume  = {14},
  number  = {5},
  pages   = {877--898},
  year    = {1976},
  url     = {https://doi.org/10.1137/0314056}
}

@misc{BPM,
  author = {Kaja Gruntkowska and Hanmin Li and Aadi Rane and Peter Richt{\'a}rik},
  title  = {The Ball-Proximal (= Broximal) Point Method: a New Algorithm, Convergence Theory, and Applications},
  note   = {Preprint, arXiv:2502.02002},
  year   = {2025},
  url    = {https://arxiv.org/abs/2502.02002}
}

@misc{NonEuclideanBPM,
  author = {Kaja Gruntkowska and Peter Richt{\'a}rik},
  title  = {Non-{Euclidean} Broximal Point Method: A Blueprint for Geometry-Aware Optimization},
  note   = {Preprint, arXiv:2510.00823},
  year   = {2025},
  url    = {https://arxiv.org/abs/2510.00823}
}

@misc{StabilizedPPM,
  author = {Hanmin Li and Kaja Gruntkowska and Peter Richt{\'a}rik},
  title  = {Stabilized Proximal Point Method via Trust Region Control},
  note   = {Preprint, arXiv:2604.02943},
  year   = {2026},
  url    = {https://arxiv.org/abs/2604.02943}
}

@misc{BroximalAlignment,
  author = {Kaja Gruntkowska and Hanmin Li and Xun Qian and Peter Richt{\'a}rik},
  title  = {Broximal Alignment for Global Non-Convex Optimization},
  note   = {Preprint, arXiv:2604.13483},
  year   = {2026},
  url    = {https://arxiv.org/abs/2604.13483}
}

@misc{LocalLMO,
  author = {Peter Richt{\'a}rik and Kaja Gruntkowska and Hanmin Li},
  title  = {Local {LMO}: Constrained Gradient Optimization via a Local Linear Minimization Oracle},
  note   = {Preprint, arXiv:2605.08850},
  year   = {2026},
  url    = {https://arxiv.org/abs/2605.08850}
}

@article{TaylorHendrickxGlineur2017,
  author  = {Adrien B. Taylor and Julien M. Hendrickx and Fran{\c{c}}ois Glineur},
  title   = {Exact Worst-Case Performance of First-Order Methods for Composite Convex Optimization},
  journal = {SIAM Journal on Optimization},
  volume  = {27},
  number  = {3},
  pages   = {1283--1313},
  year    = {2017},
  url     = {https://doi.org/10.1137/16M108104X},
  note    = {Also available at \url{https://arxiv.org/abs/1512.07516}}
}

@article{BohmDaniilidis2020,
  author  = {Axel B{\"o}hm and Aris Daniilidis},
  title   = {Ubiquitous Algorithms in Convex Optimization Generate Self-Contracted Sequences},
  journal = {Journal of Convex Analysis},
  volume  = {29},
  number  = {1},
  pages   = {119--128},
  year    = {2022},
  url     = {https://journalofconvexanalysis.com/articles/jca29006/}
}

@article{DaniilidisDevilleDurand2018,
  author  = {Aris Daniilidis and Robert Deville and Estibalitz Durand-Cartagena},
  title   = {Metric and Geometric Relaxations of Self-Contracted Curves},
  journal = {Journal of Optimization Theory and Applications},
  volume  = {182},
  number  = {1},
  pages   = {81--109},
  year    = {2019},
  doi     = {10.1007/s10957-018-1408-0},
  url     = {https://doi.org/10.1007/s10957-018-1408-0},
  note    = {Theorem numbering follows the \href{https://oai.e-spacio.uned.es/server/api/core/bitstreams/f5a61760-3508-45ac-8f01-ff9dfdf9506e/content}{accepted manuscript}}
}

@article{Bregman1967,
  author  = {L. M. Bregman},
  title   = {The Relaxation Method of Finding the Common Point of Convex Sets and Its Application to the Solution of Problems in Convex Programming},
  journal = {USSR Computational Mathematics and Mathematical Physics},
  volume  = {7},
  number  = {3},
  pages   = {200--217},
  year    = {1967},
  url     = {https://doi.org/10.1016/0041-5553(67)90040-7}
}

@book{BoydVandenberghe2004,
  author    = {Stephen Boyd and Lieven Vandenberghe},
  title     = {Convex Optimization},
  publisher = {Cambridge University Press},
  year      = {2004},
  url       = {https://www.stanford.edu/~boyd/cvxbook/bv_cvxbook.pdf}
}

@inproceedings{CarmonEtAl2021InsideBall,
  author    = {Y. Carmon and A. Jambulapati and Y. Jin and A. Sidford},
  title     = {Thinking Inside the Ball: Near-Optimal Minimization of the Maximal Loss},
  booktitle = {Proceedings of COLT},
  series    = {PMLR},
  volume    = {134},
  pages     = {866--882},
  year      = {2021},
  url       = {https://proceedings.mlr.press/v134/carmon21a.html}
}

@article{Burger2015Bregman,
  author = {Martin Burger},
  title = {{Bregman} Distances in Inverse Problems and Partial Differential Equations},
  journal = {arXiv preprint arXiv:1505.05191},
  year = {2015},
  url = {https://arxiv.org/abs/1505.05191}
}
\clearpage
\appendix
\part*{Appendix}
\tableofcontents
\clearpage

\section{Notation}
\label{app:notation}
The symbols below are grouped by their role in the analysis. Quantities involving the solution set or the optimal value assume attainment unless stated otherwise. The last column points to the defining result or discussion.

\begingroup
\small\setlength{\tabcolsep}{5pt}\renewcommand{\arraystretch}{1.22}
\begin{longtable}{@{}>{\raggedright\arraybackslash}p{.18\textwidth}>{\raggedright\arraybackslash}p{.63\textwidth}>{\raggedright\arraybackslash}p{.14\textwidth}@{}}
\caption{Notation for exact Euclidean BPM.}\label{tab:notation}\\
\toprule
\textbf{Symbol} & \textbf{Meaning or definition} & \textbf{Location}\\
\midrule
\endfirsthead
\multicolumn{3}{@{}l}{\textbf{Table \thetable. Notation for exact Euclidean BPM (continued)}}\\[5pt]
\toprule
\textbf{Symbol} & \textbf{Meaning or definition} & \textbf{Location}\\
\midrule
\endhead
\midrule\multicolumn{3}{r@{}}{\footnotesize Continued on the next page}\\
\endfoot
\bottomrule
\endlastfoot
\multicolumn{3}{@{}l}{\color{TableInk}\bfseries 1. Problem and Euclidean geometry}\\*[3pt]
$f$, $d$ & Proper, closed, convex objective on $\R^d$; $d$ is finite. & Assump.~\ref{ass:convex}\\
$\dom f$, $\partial f(x)$ & Effective domain $\{x:f(x)<+\infty\}$ and convex subdifferential at $x$. & \S\ref{sec:intro}\\
$\X$, $\fs$ & Solution set $\argmin f$ and attained minimum $\min f$. & Assump.~\ref{ass:attained}\\
$\ip uv$, $\norm u$ & Euclidean inner product and its induced norm. & \S\ref{sec:intro}\\
$\B(x,t)$ & Closed ball $\{u:\norm{u-x}\le t\}$, with $t>0$. & \S\ref{sec:intro}\\
$\mathbb{S}^{d-1}$ & Unit sphere $\{u\in\R^d:\norm u=1\}$. & Lem.~\ref{lem:acute-directions}\\
$\dist(x,C)$, $\Proj_C(x)$ & Distance to a set and Euclidean projection onto a nonempty closed convex set. & \S\ref{sec:foundation}\\
\midrule
\multicolumn{3}{@{}l}{\color{TableInk}\bfseries 2. Iterates, BPM steps, and subgradients}\\*[3pt]
$x^k$, $t_k$, $t$ & Iterate, radius of the step from $x^k$, and a constant radius; $K$ denotes a number of iterations. & Assump.~\ref{ass:iteration}\\
$\brox_f^t(x)$ & Set of exact minimizers of $f$ on $\B(x,t)$. A nonterminal step has a unique output. & \textup{(\ref*{eq:bpm-intro})}\\
$u$, $g^k$, $c_k$ & Returned point $u=x^{k+1}$ and radial subgradient $g^k=c_k(x^k-u)\in\partial f(u)$, with $c_k>0$ on a nonterminal step. & \eqref{eq:radial}\\
$\prox_{\lambda f}(x)$ & Minimizer of $f(u)+\norm{u-x}^2/(2\lambda)$; the corresponding proximal parameter is $\lambda=1/c=t/\norm g$. & \eqref{eq:prox-equivalence}\\
$T_t(x)$ & Unique nonterminal output, or the nearest minimizer $\Proj_{\X}(x)$ when the step is terminal. & Def.~\ref{def:projected}\\
$h^k$, $\tau$, $\alpha$ & Available subgradient at $x^k$; positive radius-rule parameters in $t_k=\tau\norm{h^k}$ or $t_k=\tau\Delta_k^\alpha$. & \S\ref{sec:calibration}\\
$R_{t,\lambda}(x)$ & Relaxed update $x+\lambda(T_t(x)-x)$, with relaxation factor $0<\lambda<2$. & \S\ref{sec:relaxation}\\
\midrule
\multicolumn{3}{@{}l}{\color{TableInk}\bfseries 3. Error measures and termination counts}\\*[3pt]
$D_k$, $D(x)$ & Distances to the solution set $\dist(x^k,\X)$ and $\dist(x,\X)$; $p^k=\Proj_{\X}(x^k)$. & \S\ref{sec:foundation}\\
$\Delta_k$ & Objective gap $f(x^k)-\fs$. & \S\ref{sec:values}\\
$m(x)$, $m_K$ & Minimum subgradient norm $\dist(0,\partial f(x))$ and $m(x^K)$; the distance to the empty set is $+\infty$. & Cor.~\ref{prop:minimum-subgradient}\\
$D_f^g(z,u)$ & Generalized Bregman distance $f(z)-f(u)-\ip g{z-u}$, with $g\in\partial f(u)$. & \eqref{eq:bregman-definition}\\
$\mathcal G_{x^\star}(u,g)$ & Symmetric Bregman distance to a minimizer $\ip g{u-x^\star}$, for $x^\star\in\X$ and $g\in\partial f(u)$. & \eqref{eq:pd-gap}\\
$\mathcal G_K$ & $\mathcal G_{x^\star}(x^K,g^{K-1})$, with $x^\star=\Proj_{\X}(x^0)$; set to zero at and after termination. & \S\ref{sec:primal-dual}\\
$N_t$, $N_\varepsilon$ & Iterations to termination at constant radius $t$, including the terminal step; first index with $\Delta_K\le\varepsilon$, respectively. & \eqref{eq:squared-count}, \eqref{eq:value-complexity}\\
\newpage
\multicolumn{3}{@{}l}{\color{TableInk}\bfseries 4. Squared-distance decrease and trajectory length}\\*[3pt]
$S$, $q_k$, $\beta_k$ & $S=D_0^2/t^2$, $q_k=\Delta_{k+1}/\Delta_k$, and $\beta_k=(D_k^2-D_{k+1}^2)/t_k^2$, on nonterminal iterations. & \S\ref{sec:values}\\
$r$, $r_\star$ & In the final-distance refinement, $r=\Delta_K/\Delta_0$ and $r_\star$ is the unique root defining its implicit upper bound. & Rem.~\ref{rem:jensen-final-distance}\\
$E_j$ & Minimum of the geometric and Jensen bounds on the relative objective gap at integer horizon $j$. & \eqref{eq:stationarity-value-envelope}\\
$C_d$ & Dimension-dependent constant bounding the length of a bounded self-contracted sequence by its diameter. & Lem.~\ref{lem:length}\\
$S_t$, $\Psi_t(D_0)$ & Full fixed-radius trajectory length, including the projected terminal step, and its explicit upper bound. & Prop.~\ref{prop:general-radius-cap}\\
$\bar x$, $L_k$ & Limit of the stopped trajectory and remaining length $\sum_{i=k}^\infty\norm{x^{i+1}-x^i}$. & Cor.~\ref{cor:tail-length}\\
$\rho$, $\rho_\star$ & Decay factor in $t_k=t\rho^k$ and the minimum successful factor when $0<t<D_0$. & Thm.~\ref{thm:geometric-factor}\\
$\mathcal T_k(\rho)$ & Remaining geometric radius sum $t\rho^k/(1-\rho)$. & App.~\ref{app:geometric-factor}\\
\midrule
\multicolumn{3}{@{}l}{\color{TableInk}\bfseries 5. Envelope, duality, and epigraph lifting}\\*[3pt]
$F_t$, $f^*$ & Ball envelope $F_t(x)=\inf_{\norm{u-x}\le t}f(u)$ and convex conjugate $f^*(g)=\sup_x\{\ip gx-f(x)\}$. & \eqref{eq:envelope-definition}\\
$q_t(\gamma)$, $u_\gamma$ & Scalar dual value and proximal point $\prox_{f/\gamma}(x)$, for $\gamma>0$. & \eqref{eq:view-scalar-dual}--\eqref{eq:view-scalar-path}\\
$\delta_C$, $\operatorname{epi}f$ & Indicator of a set ($0$ on $C$, $+\infty$ outside) and epigraph $\{(x,s):f(x)\le s\}$. & \eqref{eq:epigraph-objective}\\
$F$, $z^k$, $R_k$ & Lifted objective $F(x,s)=s+\delta_{\operatorname{epi}f}(x,s)$, lifted iterate $z^k=(x^k,f(x^k))$, and lifted distance to the solution set $R_k$. & \S\ref{sec:reformulations}\\
$r_k$, $\delta_k$ & Projected displacement $\norm{x^{k+1}-x^k}$ and value decrease $f(x^k)-f(x^{k+1})$; $t_k^2=r_k^2+\delta_k^2$. & \eqref{eq:epigraph-radius}\\
$G^k$, $\lambda_k$ & Radial subgradient of the lifted objective $(\lambda_kg^k,1-\lambda_k)$ and its coefficient $\lambda_k=1-c_k\delta_k\in(0,1)$. & \eqref{eq:epigraph-certificate}\\
$N_t^{\rm epi}$ & Fixed-radius termination count for lifted BPM initialized on the graph. & \eqref{eq:epigraph-count}\\
\end{longtable}
\endgroup
Context distinguishes three uses of $\lambda$: a positive proximal parameter, a relaxation factor in $(0,2)$, and a coefficient in the epigraph subgradient representation in $(0,1)$. Likewise, $F_t$ is the ball envelope, whereas $F$ is the lifted objective; $S$ is a squared-distance budget, whereas $S_t$ is a trajectory length.
\clearpage

\section{Auxiliary results}
\label{app:auxiliary-results}

\subsection{Auxiliary geometry for the Euclidean length bound}

\begin{lemma}[Directions with pairwise nonnegative inner products]\label{lem:acute-directions}
Let $\mathbb{S}^{d-1}=\{u\in\R^d:\norm u=1\}$. 
If a nonempty set $U\subset \mathbb{S}^{d-1}$ satisfies $\ip{u}{v}\ge0$ for every $u,v\in U$, then there exists $w\in \mathbb{S}^{d-1}$ such that
$\ip{w}{u}\ge1/\sqrt d$ for every $u\in U$.
\end{lemma}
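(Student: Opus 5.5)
The plan is to take $w$ to be the normalized minimum-norm point of the convex hull of $U$. The bound $1/\sqrt d$ will come from two facts: the pairwise nonnegativity of inner products, and a Carath\'eodory count inside a supporting hyperplane.

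First, replace $U$ by its closure $\bar U\subset\mathbb{S}^{d-1}$. Since the inner product is continuous, $\ip uv\ge0$ still holds for all $u,v\in\bar U$, and $\bar U$ is compact. Let $C=\operatorname{conv}\bar U$, which is compact and convex, and let $v=\Proj_C(0)$ be its minimum-norm point.

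The basic estimate is as follows. For any convex combination $z=\sum_{i=1}^m\lambda_iu_i$ with $u_i\in\bar U$, expanding the square and dropping the nonnegative cross terms gives
\[
\norm z^2=\sum_{i,j}\lambda_i\lambda_j\ip{u_i}{u_j}\ge\sum_{i}\lambda_i^2\ge\frac1m,
\]
where the last step is Cauchy--Schwarz applied to $\sum_i\lambda_i=1$. In particular every point of $C$ is nonzero, so $v\neq0$.

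Next, the projection characterization of $v=\Proj_C(0)$ gives $\ip{0-v}{z-v}\le0$ for all $z\in C$, that is, $\ip vz\ge\norm v^2$ for every $z\in C$. Setting $w=v/\norm v$, we get $\ip wu\ge\norm v$ for every $u\in U$. It therefore remains to show $\norm v\ge1/\sqrt d$.

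This last step is the only delicate one. Plain Carath\'eodory writes $v$ as a convex combination of $d+1$ points of $\bar U$, which yields only $1/\sqrt{d+1}$. To save one point, consider the hyperplane $H=\{z:\ip vz=\norm v^2\}$. It supports $C$ at $v$, and $C$ lies in the halfspace $\{z:\ip vz\ge\norm v^2\}$.

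Take any representation $v=\sum\lambda_iu_i$ with $\lambda_i>0$ and $u_i\in\bar U$. Pairing with $v$ gives $\sum_i\lambda_i\bigl(\ip v{u_i}-\norm v^2\bigr)=0$. Every term in this sum is nonnegative, so every $u_i$ lies in $H$. Hence $v\in\operatorname{conv}(\bar U\cap H)$. Since $H$ is an affine set of dimension $d-1$, Carath\'eodory's theorem applied within $H$ expresses $v$ as a convex combination of at most $d$ points of $\bar U\cap H$. The basic estimate with $m\le d$ then gives $\norm v^2\ge1/d$, which completes the argument.
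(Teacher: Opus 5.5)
Your proof is correct and follows the paper's argument essentially step for step: you pass to the closure, take the minimum-norm point of the convex hull, use the estimate $\norm{z}^2\ge\sum_i\lambda_i^2\ge 1/m$ from pairwise nonnegativity, apply the projection inequality, and use Carath\'eodory inside the supporting hyperplane $\{z:\ip vz=\norm v^2\}$ to get $m\le d$. The only difference is that you apply the hyperplane argument in all cases, whereas the paper first treats separately the case where $\operatorname{conv}U$ has affine dimension at most $d-1$; your version shows that case split is unnecessary.
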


\begin{proof}
Replace $U$ by its closure and let $p$ be the point of minimum norm in its convex hull. 
By Carath\'eodory's theorem, $p=\sum_{i=1}^m a_i u_i$ with $m\le d+1$, $a_i\ge0$, and $\sum_i a_i=1$. 
Pairwise nonnegativity gives $\norm p^2\ge\sum_i a_i^2\ge1/(d+1)$, so $p\ne0$. 
Since $p$ is the projection of the origin onto $\operatorname{conv} U$, $\ip{p}{u}\ge\norm p^2$ for every $u\in U$.
It remains to sharpen the representation to $m\le d$. 
If $\operatorname{conv} U$ has affine dimension at most $d-1$, this follows directly from Carath\'eodory's theorem. 
Otherwise, the supporting hyperplane $\{u:\ip{p}{u}=\norm p^2\}$ contains $p$. 
In any convex representation of $p$, every point with positive weight must lie on this hyperplane, so Carath\'eodory's theorem there again gives a representation with at most $d$ points. 
Hence $\norm p^2\ge1/d$. 
Setting $w=p/\norm p$ gives $\ip{w}{u}\ge1/\sqrt d$ for every $u\in U$.
\end{proof}

For $\delta\in(0,1]$, the unit sphere $\mathbb{S}^{d-1}$ has a finite $\delta$-net $V\subset \mathbb{S}^{d-1}$ with $|V|\le(1+2/\delta)^d$.
Indeed, take a maximal $\delta$-separated subset of $\mathbb{S}^{d-1}$.
The disjoint open balls of radius $\delta/2$ centered at its points lie inside the ball of radius $1+\delta/2$.
Volume comparison gives the cardinality bound, while maximality gives the net property.

\section{Proofs}
\label{app:main-proofs}

\subsection{Proof of \texorpdfstring{\Cref{prop:four-views}}{Proposition~\ref*{prop:four-views}}}
\label{app:four-views}

\begin{proof}
\emph{Optimality system.}
By \Cref{lem:certificate}, every step from $x\in\dom f$ attains a finite value.
If the output $u$ is nonterminal, it is unique, $\norm{x-u}=t$, and there exists $g=c(x-u)\in\partial f(u)$ with $c>0$. 
Hence

\[
g\in\partial f(u), \qquad x-u\overset{\eqref{eq:radial}}{\in} t\partial\norm{\cdot}(g).
\]

If $u$ is terminal, then $0\in\partial f(u)$ and $\norm{x-u}\le t$, so the same system holds with $g=0$.
Conversely, suppose
\[
g\in\partial f(u), \qquad x-u\in t\partial\norm{\cdot}(g).
\]
If $g=0$, then $u$ is a feasible global minimizer. If $g\ne0$, then $x-u=tg/\norm g$, and for every $z\in\B(x,t)$,

\[
f(z)\ge f(u)+\ip g{z-u} \overset{\eqref{eq:radial}}{=}f(u)+\ip g{z-x}+t\norm g \ge f(u).
\]

Thus $u$ is an exact ball minimizer. This proves \eqref{eq:view-system}.

\emph{(i): Exact ball minimization.}
This is the defining relation $u\in\brox_f^t(x)$ in \eqref{eq:view-ball}.

\emph{(ii): Normalized implicit subgradient formulation.}
On a nonterminal step, the optimality system gives $g=c(x-u)\ne0$ and $\norm{x-u}=t$, hence

\[
x-u\overset{\eqref{eq:radial}}{=}t\frac{g}{\norm g}.
\]

Moreover,
$0\in\partial f(u)+c(u-x)$, so

\[
u\overset{\eqref{eq:prox-equivalence}}{=}\prox_{\lambda f}(x), \qquad \lambda=\frac1c=\frac{t}{\norm g}.
\]

This proves \eqref{eq:view-implicit}. Conversely, any $g\in\partial f(u)\setminus\{0\}$ satisfying $x-u=tg/\norm g$ satisfies the optimality system and therefore gives an exact ball minimizer.

\emph{(iii): Normalized explicit envelope formulation.}
Write $F_t=f\mathbin\square\delta_{\B(0,t)}$.
Convexity follows directly from convexity of $f$ and the ball, and $F_t$ is proper because $F_t(y)\le f(y)<\infty$ for every $y\in\dom f$.
To see that $F_t$ is closed, let $y_n\to y$ with $\liminf_n F_t(y_n)<\infty$. Passing to a subsequence, assume the limit exists and choose minimizers $v_n$ with $\norm{v_n-y_n}\le t$. Then $(v_n)$ is bounded, so along a further subsequence $v_n\to v$ with $\norm{v-y}\le t$. Closedness of $f$ gives

\[
F_t(y)\overset{\eqref{eq:envelope-definition}}{\le} f(v)\le\liminf_n f(v_n) =\liminf_n F_t(y_n).
\]

The same argument rules out a limit of $-\infty$ by properness of $f$. Thus $F_t$ is proper, closed, and convex.
Writing $y=v+w$ with $\norm w\le t$ gives

\begin{equation}
\label{eq:envelope-conjugate-proof}
F_t^*(g)=f^*(g)+t\norm g.
\end{equation}

Fix an exact output $u$ at $x$, so $F_t(x)=f(u)$. For any $g$,

\begin{equation}
\label{eq:fenchel-slacks}
f(u)+f^*(g)-\ip gu\ge0, \qquad t\norm g-\ip g{x-u}\ge0.
\end{equation}

Their sum is $F_t(x)+F_t^*(g)-\ip gx$. 
Hence $g\in\partial F_t(x)$ exactly when both terms vanish, namely when

\[
g\in\partial f(u), \qquad \ip g{x-u}\overset{\eqref{eq:fenchel-slacks}}{=}t\norm g.
\]

This proves \eqref{eq:view-common-subgrad}. On a nonterminal step, $0\notin\partial f(u)$, and equality in Cauchy--Schwarz gives

\[
x-u\overset{\eqref{eq:view-common-subgrad}}{=}t\frac{g}{\norm g}
\]

for every $g\in\partial F_t(x)$. This proves \eqref{eq:view-explicit}.

\emph{(iv): Projection formulation.}
Let $u$ be nonterminal and $C_u=\{z:f(z)\le f(u)\}$. For every $z\in C_u$,
\[
\ip g{z-u}\le f(z)-f(u)\le0.
\]
Since $g=c(x-u)$ with $c>0$, this is the projection optimality condition for $u=\Proj_{C_u}(x)$. Together with $\norm{x-u}=t$, this proves \eqref{eq:view-sublevel}.
Conversely, suppose $u=\Proj_{C_u}(x)$ and $\norm{x-u}=t$. If some $z\in\B(x,t)$ satisfied $f(z)<f(u)$, then $v=(z+u)/2$ would satisfy $f(v)<f(u)$ and $\norm{v-x}<t$, contradicting $\dist(x,C_u)=t$. Hence $u$ is an exact ball minimizer.
Taking $\alpha=f(u)$ gives \eqref{eq:view-exchange}.

\emph{(v): Vector dual formulation.}
The identity $F_t^*=f^*+t\norm{\cdot}$ gives

\[
F_t(x)\overset{\eqref{eq:envelope-conjugate-proof}}{=}\max_g\{\ip xg-f^*(g)-t\norm g\},
\]

and the maximizing set is exactly $\partial F_t(x)$. 
Hence the maximum is attained.
For any dual optimizer $g$ and exact primal output $u$, the two nonnegative terms above have zero sum and therefore vanish separately. Thus
\[
u\in\partial f^*(g), \qquad x-u\in t\partial\norm{\cdot}(g),
\]
which proves \eqref{eq:view-dual}. 
Conversely, these inclusions give the optimality system and therefore primal optimality and equality of primal and dual values.
On a nonterminal step, $g\ne0$, so

\[
u\overset{\eqref{eq:view-system}}{=}x-t\frac{g}{\norm g}.
\]

If $p\in\partial f(x)$ is available, conjugacy gives $x\in\partial f^*(p)$ and $f^*(p)<\infty$. 
Moreover,

\[
t\norm h+D_{f^*}^{x}(h,p) \overset{\eqref{eq:view-dual-bregman}}{=} f^*(h)+t\norm h-\ip xh +\ip xp-f^*(p),
\]

so \eqref{eq:view-dual-bregman} differs from the vector dual objective only by a constant. 
Thus the two minimization problems are equivalent.

\emph{(vi): Scalar multiplier formulation.}
For $\gamma\ge0$ and every feasible $z$,
\[
f(z)+\frac{\gamma}{2} \bigl(\norm{z-x}^2-t^2\bigr) \le f(z),
\]
so $q_t(\gamma)\le F_t(x)$. 
For a nonterminal output, let $g=c(x-u)$ be a radial subgradient. Since
$0\in\partial f(u)+c(u-x)$, the point $u$ minimizes $f(z)+c\norm{z-x}^2/2$. 
Together with $\norm{u-x}=t$, this gives $q_t(c)=f(u)=F_t(x)$. 
Hence the scalar maximum is attained at a positive multiplier.
For $\gamma>0$, let
\[
u_\gamma=\prox_{f/\gamma}(x).
\]
The minimizer is unique and depends continuously on $\gamma>0$.
Indeed, on a compact positive parameter interval, comparison with $z=x$ and an affine lower bound on $f$ keep the minimizers bounded. Every cluster point at a limiting parameter solves the corresponding strongly convex problem and is therefore its unique minimizer.
Set
\[
a(\gamma)=\frac{\norm{u_\gamma-x}^2-t^2}{2}.
\]
For $\eta>\gamma>0$, evaluating each parameter's objective at the other parameter's minimizer gives

\begin{equation}
\label{eq:dual-secants}
a(\eta) \overset{\eqref{eq:view-scalar-dual}}{\le} \frac{q_t(\eta)-q_t(\gamma)}{\eta-\gamma} \overset{\eqref{eq:view-scalar-dual}}{\le} a(\gamma).
\end{equation}

Continuity therefore yields

\[
q_t'(\gamma) \overset{\eqref{eq:dual-secants}}{=} \frac12\bigl(\norm{u_\gamma-x}^2-t^2\bigr),
\]

which proves \eqref{eq:view-scalar-derivative}. 
Since $q_t$ is an infimum of affine functions of $\gamma$, it is concave, and the same inequalities show that $\norm{u_\gamma-x}$ is nonincreasing.
Every positive maximizing multiplier satisfies $\norm{u_\gamma-x}=t$. Its proximal optimality condition gives $g=\gamma(x-u_\gamma)$, so $u_\gamma$ is the exact ball output.
Conversely, any positive $\gamma$ satisfying $\norm{u_\gamma-x}=t$ gives the same radial subgradient and attains the scalar maximum. 
The maximizing multiplier need not be unique.

\emph{(vii): Epigraph formulation.}
If $u$ minimizes $f$ over $\B(x,t)$, then $(u,f(u))$ is feasible for \eqref{eq:view-epigraph-cylinder}, and every feasible $(v,s)$ satisfies $s\ge f(v)\ge f(u)$. 
Hence $(u,f(u))$ is optimal.
Conversely, if $(u,s)$ is optimal, then $s=f(u)$; otherwise lowering $s$ to $f(u)$ preserves feasibility and decreases the objective. 
For every $v\in\B(x,t)\cap\dom f$, the feasible pair $(v,f(v))$ then gives $f(u)\le f(v)$. 
Thus $u$ is an exact ball minimizer, and

\[
\argmin_{\substack{f(v)\le s\\ \norm{v-x}\le t}}s\overset{\eqref{eq:view-epigraph-cylinder}}{=}\{(v,f(v)):v\in\brox_f^t(x)\}.
\]

This proves (vii).
\end{proof}

\subsection{Proof of \texorpdfstring{\Cref{lem:certificate}}{Lemma~\ref*{lem:certificate}}}
\label{proof:foundation:1}

\begin{proof} 
Since $x\in\dom f$, the compact ball $\B(x,t)$ contains a point where $f$ is finite.
Since $f$ is proper and closed, it attains a finite minimum on $\B(x,t)$. 
Hence $\brox_f^t(x)$ is nonempty and compact.
Suppose the step is nonterminal and let $u\in\brox_f^t(x)$. 
Then $u$ must lie on the boundary of the ball: otherwise $u$ would be a local minimizer of the convex function $f$, hence a global minimizer. 
The output is unique as well, since two distinct minimizers would have an interior midpoint of no larger value, which would again be a global minimizer. 
Thus $\norm{x-u}=t$. 
Under \assref{ass:attained}, if $D(x)\le t$, the ball contains a global minimizer, and hence $\brox_f^t(x)=\X\cap\B(x,t)$. 
If $D(x)=t$, every point in this intersection is a projection of $x$ onto the closed convex set $\X$, so uniqueness of the projection gives $\brox_f^t(x)=\{\Proj_{\X}(x)\}$. 
It remains to prove the subgradient for a nonterminal step. 
Choose $z\in\ri(\dom f)$. 
Points on the segment from $x$ toward $z$, sufficiently close to $x$, lie in $\ri(\dom f)\cap\B^\circ(x,t)$, so the convex subdifferential sum rule applies. 
Fermat's rule gives $0\in\partial f(u)+N_{\B(x,t)}(u)$. Since $\norm{x-u}=t$, we have $N_{\B(x,t)}(u)=\{c(u-x):c\ge0\}$, and therefore, for some $g\in\partial f(u)$ and $c\ge0$, 
\begin{equation*}
  \norm{x-u}=t,\qquad g=c(x-u)\in\partial f(u). 
\end{equation*} 
If $c=0$, then $0\in\partial f(u)$ and $u$ is globally optimal, contradicting nonterminality. 
Hence $c>0$. 
Finally, setting $\lambda=1/c$ gives $0\in\partial f(u)+(u-x)/\lambda$, which is the optimality condition for the proximal subproblem. 
Therefore 
\begin{equation*}
  u=\prox_{\lambda f}(x),\qquad \lambda=\frac{1}{c}=\frac{t}{\norm g}.
\end{equation*} 
\end{proof}

\subsection{Proof of \texorpdfstring{\Cref{prop:certificate-monotone}}{Proposition~\ref*{prop:certificate-monotone}}}
\label{proof:foundation:4}

\begin{proof}
For two consecutive nonterminal iterations, monotonicity of $\partial f$ at $x^{k+1}$ and $x^{k+2}$ gives
\[
\ip{g^k-g^{k+1}}{x^{k+1}-x^{k+2}}\ge0.
\]
Using $x^{k+1}-x^{k+2}=t_{k+1}g^{k+1}/\norm{g^{k+1}}$
and dividing by $t_{k+1}>0$ yields \eqref{eq:acute}. 
Cauchy--Schwarz then gives $\norm{g^{k+1}}\le\norm{g^k}$, and \eqref{eq:acute} also shows that the two radial subgradient directions have positive inner product.
Now suppose the first $K$ iterations are nonterminal. Summing \eqref{eq:drop} and using the monotonicity just proved gives

\[
f(x^0)-f(x^K)
\overset{\eqref{eq:drop}}{\ge} \sum\limits_{k=0}^{K-1} t_k\norm{g^k}
\overset{\eqref{eq:acute}}{\ge} \left(\sum\limits_{k=0}^{K-1}t_k\right)\norm{g^{K-1}}.
\]

Since $g^{K-1}\in\partial f(x^K)$,
$\dist(0,\partial f(x^K))\le\norm{g^{K-1}}$, and under \assref{ass:attained},
$f(x^0)-f(x^K)\le\Delta_0$. This proves \eqref{eq:stationarity}. If a terminal output has already occurred, then by convention $g^{K-1}=0$ and $0\in\partial f(x^K)$, so the same inequalities hold immediately.
\end{proof}

\subsection{Proof of \texorpdfstring{\Cref{thm:segment}}{Theorem~\ref*{thm:segment}}}
\label{proof:value_rates:1}

\begin{proof}
Suppose first that $D_k>0$, and let $p^k=\Proj_{\X}(x^k)$ and $a_k=\min\{t_k/D_k,1\}$. 
The point $v=(1-a_k)x^k+a_kp^k$ satisfies
$\norm{v-x^k}=a_kD_k\le t_k$ and is therefore feasible for the $k$th step. 
Exact minimization and convexity give

\[
f(x^{k+1})\overset{\textup{(\ref*{eq:bpm-intro})}}{\le} f(v)
\le(1-a_k)f(x^k)+a_k\fs.
\]

Subtracting $\fs$ yields \eqref{eq:segment}. If $D_k=0$, then $x^k\in\X$, and both gaps are zero by the stopping convention. 
Now let $t_k=t$ with $0<t<D_0$. Before termination, $D_k\le D_0$, so $1-t/D_k\le1-t/D_0$ on every nonterminal step. 
Iterating \eqref{eq:segment} therefore gives \eqref{eq:frozen}. 
At and after a terminal step the objective gap is zero, so the same bound holds for the stopped sequence.
\end{proof}

\subsection{Proof of \texorpdfstring{\Cref{prop:gap-distance}}{Corollary~\ref*{prop:gap-distance}}}
\label{proof:value_rates:4}

\begin{proof}
Consider a nonterminal step and let $p^{k+1}=\Proj_{\X}(x^{k+1})$. 
Since $g^k\in\partial f(x^{k+1})$, the subgradient inequality gives $\Delta_{k+1}\le\norm{g^k}D_{k+1}$, while \eqref{eq:drop} gives $\Delta_k-\Delta_{k+1}\ge t_k\norm{g^k}$. 
Hence

\[
\frac{\Delta_{k+1}}{\Delta_k}
\overset{\eqref{eq:drop}}{\le}\frac{D_{k+1}}{D_{k+1}+t_k}
\le\frac{D_{k+1}}{D_k},
\]

where the second inequality uses $D_k\le D_{k+1}+\norm{x^{k+1}-x^k}=D_{k+1}+t_k$.
Multiplying these inequalities over a sequence of nonterminal steps gives $\Delta_K/\Delta_0\le D_K/D_0$, proving \eqref{eq:gap-distance}.
If a terminal step occurs earlier, then the stopped sequence has $\Delta_K=D_K=0$, so the same bound holds.
\end{proof}

\subsection{Proof of \texorpdfstring{\Cref{thm:refined}}{Theorem~\ref*{thm:refined}}}
\label{proof:value_rates:2}

\begin{proof}
Let $p^k=\Proj_{\X}(x^k)$. 
Applying \eqref{eq:identity} with $z=p^k$ and discarding the nonnegative Bregman term gives

\[
D_k^2-\norm{x^{k+1}-p^k}^2
\overset{\eqref{eq:identity}}{\ge} t_k^2+\frac{2\Delta_{k+1}}{c_k}.
\]

Since $D_{k+1}\le\norm{x^{k+1}-p^k}$, this yields \eqref{eq:refined-c}.
By \eqref{eq:drop},
$\Delta_k-\Delta_{k+1}\ge t_k\norm{g^k}=c_kt_k^2>0$.
Hence

\[
\frac{2\Delta_{k+1}}{c_k}
\overset{\eqref{eq:drop},\eqref{eq:radial}}{\ge}
\frac{2t_k^2\Delta_{k+1}}
{\Delta_k-\Delta_{k+1}}.
\]

Substituting this bound into \eqref{eq:refined-c} and combining the two $t_k^2$ terms gives \eqref{eq:refined-ratio}. 
Since the step is nonterminal, $\Delta_{k+1}>0$, so the multiplier in \eqref{eq:refined-ratio} is strictly larger than one.
\end{proof}

\subsection{Proof of \texorpdfstring{\Cref{thm:jensen}}{Theorem~\ref*{thm:jensen}}}
\label{proof:value_rates:3}

\begin{proof}
For the first $K$ nonterminal iterations, set $q_k=\Delta_{k+1}/\Delta_k\in(0,1)$ and $\beta_k=(D_k^2-D_{k+1}^2)/t^2>1$. 
By \Cref{thm:refined},

\begin{equation}
\label{eq:jensen-input}
q_k\overset{\eqref{eq:refined-ratio}}{\le}\frac{\beta_k-1}{\beta_k+1}, \qquad \sum\limits_{k=0}^{K-1}\beta_k =\frac{D_0^2-D_K^2}{t^2}<S,
\end{equation}

where the strict inequality follows from $D_K>0$. Since every $\beta_k>1$, this also implies $K<S$.
Let $h(b)=\log((b-1)/(b+1))$. 
On $(1,\infty)$, $h'(b)=2/(b^2-1)>0$ and $h''(b)=-4b/(b^2-1)^2<0$, so $h$ is increasing and strictly concave. Jensen's inequality therefore gives

\[
\begin{aligned}\log\frac{\Delta_K}{\Delta_0}&\overset{\eqref{eq:ratio-definitions}}{=}\sum\limits_{k=0}^{K-1}\log q_k\\&\overset{\eqref{eq:jensen-input}}{\le}K h\left(\frac1K\sum\limits_{k=0}^{K-1}\beta_k\right)\\&\overset{\eqref{eq:jensen-input}}{\le}K\log\frac{S-K}{S+K}.\end{aligned}
\]

Exponentiating proves the first inequality in \eqref{eq:jensen}. With $v=K/S\in(0,1)$, the inequality $\log((1-v)/(1+v))\le-2v$ yields $\Delta_K\le\Delta_0\exp(-2K^2/S)$.
For \eqref{eq:value-complexity}, \Cref{cor:squared} gives
$N_\varepsilon\le\lceil S\rceil$. 
Now set
\[
\widehat K = \left\lceil \frac{D_0}{t} \sqrt{\frac12\log\frac{\Delta_0}{\varepsilon}} \right\rceil.
\]
If $\widehat K\ge\lceil S\rceil$, the first bound already gives $N_\varepsilon\le\widehat K$. Otherwise $\widehat K<S$. 
If a terminal step occurs before $\widehat K$, then $\Delta_{\widehat K}=0\le\varepsilon$; if the first $\widehat K$ iterations are nonterminal, the exponential bound gives $\Delta_{\widehat K}\le\varepsilon$. 
Thus $N_\varepsilon\le\widehat K$, proving \eqref{eq:value-complexity}.
\end{proof}

\subsection{Proof of \texorpdfstring{\Cref{thm:refined-stationarity}}{Theorem~\ref*{thm:refined-stationarity}}}
\label{app:refined-stationarity}

\begin{proof}
Suppose first that the first $K$ iterations are nonterminal, and choose radial subgradients $g^i\in\partial f(x^{i+1})$ for $0\le i<K$. 
For any $0\le j<K$, summing \eqref{eq:drop} from $i=j$ to $K-1$ and discarding the nonnegative Bregman distances gives

\begin{equation}
\label{eq:suffix-drop-proof}
\Delta_j-\Delta_K \overset{\eqref{eq:drop}}{\ge} \sum\limits_{i=j}^{K-1}t_i\norm{g^i}.
\end{equation}

By \Cref{prop:certificate-monotone}, $\norm{g^i}\ge\norm{g^{K-1}}\ge m(x^K)$ for $j\le i<K$. 
Hence

\[
\Delta_j-\Delta_K \overset{\eqref{eq:suffix-drop-proof},\eqref{eq:acute}}{\ge} \left(\sum\limits_{i=j}^{K-1}t_i\right)m(x^K).
\]

Dividing by the positive radius sum and minimizing over $j$ gives the first inequality in \eqref{eq:suffix-stationarity}. 
The second follows from $\Delta_K\ge0$.
If a terminal output occurs before or at $K$, then $m(x^K)=0$, while
$\Delta_j\ge\Delta_K$ for every $j<K$, so \eqref{eq:suffix-stationarity} remains valid.
Now let $t_k=t$ with $0<t<D_0$, and suppose the first $K$ iterations are nonterminal.
By \Cref{thm:jensen}, $K<S$. Taking $j=K-1$ in \eqref{eq:suffix-stationarity} gives $m(x^K)\le{\Delta_{K-1}}/{t}$.
For $K\ge2$, applying \Cref{thm:jensen} at horizon $K-1$ yields

\[
\Delta_{K-1} \overset{\eqref{eq:jensen}}{\le} \Delta_0 \left(\frac{S-K+1}{S+K-1}\right)^{K-1} \overset{\eqref{eq:jensen}}{\le} \Delta_0 \exp\left(-\frac{2(K-1)^2}{S}\right),
\]

which proves \eqref{eq:jensen-stationarity}. For $K=1$, the same bound reduces to $m(x^1)\le\Delta_0/t$.
Finally, for every $0\le j<K$, \eqref{eq:frozen} and \eqref{eq:jensen} give $\Delta_j\le\Delta_0E_j$.
Since
$\sum_{i=j}^{K-1}t_i=(K-j)t$, the second inequality in
\eqref{eq:suffix-stationarity} gives

\[
m(x^K) \overset{\eqref{eq:suffix-stationarity},\eqref{eq:stationarity-value-envelope}}{\le} \frac{\Delta_0}{t}\frac{E_j}{K-j}.
\]

Minimizing over $0\le j<K$ proves \eqref{eq:optimized-stationarity}.
If $f$ is differentiable, then $m(x^K)=\norm{\nabla f(x^K)}$, giving the stated specialization.
\end{proof}

\subsection{Proof of \texorpdfstring{\Cref{prop:primal-dual-gap}}{Proposition~\ref*{prop:primal-dual-gap}}}
\label{app:primal-dual-gap}

\begin{proof}
Since $x^\star\in\X$, we have $0\in\partial f(x^\star)$, hence
$x^\star\in\partial f^*(0)$ and $f^*(0)=-\fs$. 
Similarly, $g\in\partial f(u)$ implies $u\in\partial f^*(g)$ and
$f^*(g)=\ip ug-f(u)$. 
Therefore

\begin{equation}
\label{eq:reverse-gap-proof}
D_{f^*}^{u}(0,g) \overset{\eqref{eq:view-dual-bregman}}{=}f^*(0)-f^*(g)+\ip ug =f(u)-\fs,
\end{equation}

which proves \eqref{eq:pd-reverse}. 
Moreover,

\begin{equation}
\label{eq:forward-gap-proof}
D_{f^*}^{x^\star}(g,0) \overset{\eqref{eq:view-dual-bregman}}{=}f^*(g)-f^*(0)-\ip{x^\star}g =\fs-f(u)-\ip g{x^\star-u} \overset{\eqref{eq:bregman-definition}}{=}D_f^g(x^\star,u),
\end{equation}

proving \eqref{eq:pd-forward}. 
Both quantities are nonnegative by convexity. 
Adding them gives

\[
\mathcal G_{x^\star}(u,g) \overset{\eqref{eq:reverse-gap-proof},\eqref{eq:forward-gap-proof}}{=}\ip g{u-x^\star},
\]

which is \eqref{eq:pd-gap}. Hence

\[
0\le f(u)-\fs \overset{\eqref{eq:reverse-gap-proof},\eqref{eq:forward-gap-proof}}{\le}\mathcal G_{x^\star}(u,g) \overset{\eqref{eq:pd-gap}}{\le}\norm g\,\norm{u-x^\star},
\]

proving \eqref{eq:pd-sandwich}.
Now consider the stopped iteration and set $x^\star=\Proj_{\X}(x^0)$.
Before termination, \eqref{eq:lower-fejer} gives
$\norm{x^K-x^\star}\le D_0$, and therefore $\mathcal G_K\le D_0\norm{g^{K-1}}.$
For any $0\le j<K$, summing \eqref{eq:drop} gives

\begin{equation}
\label{eq:pd-drop-proof}
\Delta_j-\Delta_K \overset{\eqref{eq:drop}}{\ge} \sum\limits_{i=j}^{K-1}t_i\norm{g^i}.
\end{equation}

By \Cref{prop:certificate-monotone}, $\norm{g^i}\ge\norm{g^{K-1}}$ for $j\le i<K$, so

\[
\left(\sum\limits_{i=j}^{K-1}t_i\right)\norm{g^{K-1}} \overset{\eqref{eq:pd-drop-proof},\eqref{eq:acute}}{\le} \Delta_j-\Delta_K.
\]

Combining the last two inequalities and minimizing over $j$ proves \eqref{eq:pd-suffix}.
At and after termination, the convention $g^{K-1}=0$ gives $\mathcal G_K=0$, so the same bound holds.
Finally, for a constant-radius sequence of nonterminal steps, $\Delta_j\le\Delta_0E_j$ and $\sum_{i=j}^{K-1}t_i=(K-j)t$. Substituting these into \eqref{eq:pd-suffix} gives

\[
\mathcal G_K \overset{\eqref{eq:pd-suffix},\eqref{eq:stationarity-value-envelope}}{\le} \frac{D_0\Delta_0}{t} \frac{E_j}{K-j}.
\]

Minimizing over $0\le j<K$ proves \eqref{eq:pd-optimized}. 
Since the two Bregman distances are nonnegative and sum to $\mathcal G_K$, each satisfies the same upper bounds.
\end{proof}

\subsection{Proof of \texorpdfstring{\Cref{thm:general-convergence}}{Theorem~\ref*{thm:general-convergence}}}
\label{app:general-convergence}

\begin{proof}
If $\sum_k t_k<\infty$, the final assertion of \Cref{thm:arbitrary-radii} applies without assuming attainment and gives finite trajectory length together with

\[
x^k\overset{\eqref{eq:arbitrary-radius-convergence}}{\to}\bar x\in\dom f,\qquad f(x^k)\overset{\eqref{eq:arbitrary-radius-convergence}}{\downarrow} f(\bar x).
\]

This proves (i).
Suppose now that $\sum_k t_k=\infty$.
If a minimizer exists, \Cref{thm:criterion} gives finite termination, proving (ii).
It remains to consider $\argmin f=\varnothing$.
Every step is then nonterminal, so $\norm{x^{k+1}-x^k}=t_k$ for every $k$.
Since $x^k$ is feasible for the $k$th ball problem, the values $f(x^k)$ are nonincreasing.
Let $\ell=\lim_k f(x^k)$.
Suppose $\ell>\inf f$.
Choose $z\in\dom f$ with $f(z)<\ell$.
Then $f(z)<f(x^{k+1})$ for every $k$, so \eqref{eq:lower-fejer} gives

\[
\norm{x^{k+1}-z}^2\overset{\eqref{eq:lower-fejer}}{\le}\norm{x^k-z}^2-t_k^2.
\]

Hence the trajectory is bounded.
By \Cref{lem:self-contracted}, it is self-contracted, and \Cref{lem:length} therefore gives finite total length.
But every step has length $t_k$, so

\[
\sum\limits_{k=0}^{\infty}\norm{x^{k+1}-x^k}\overset{\eqref{eq:radial}}{=}\sum\limits_{k=0}^{\infty} t_k=\infty,
\]

contradicting the finite-length conclusion.
Therefore $f(x^k)\downarrow\inf f$.
Suppose next that $\norm{x^k}$ does not tend to infinity.
Then some bounded subsequence has a cluster point $\bar x$.
By closedness of $f$,
\[
f(\bar x)\le\liminf_j f(x^{k_j})=\inf f.
\]
If $\inf f$ is finite, then $\bar x$ attains the infimum, contradicting $\argmin f=\varnothing$.
If $\inf f=-\infty$, then $f(\bar x)=-\infty$, contradicting properness.
Thus $\norm{x^k}\to\infty$, proving (iii).
The two nonsummable cases also give \eqref{eq:general-value-convergence}.
For universal necessity, use the construction in the proof of \Cref{thm:criterion}.
For any summable schedule with $T=\sum_k t_k$, the objective $f(x)=\norm x$ and starting point $x^0=(T+1)e_1$ produce $x^k\to e_1$ and $f(x^k)\to1>0=\inf f$.
Thus no summable positive schedule guarantees value convergence to the infimum on every problem.
\end{proof}

\subsection{Proof of \texorpdfstring{\Cref{lem:self-contracted}}{Lemma~\ref*{lem:self-contracted}}}
\label{proof:termination:1}

\begin{proof}
Since $x^k$ is feasible for the $k$-th ball problem, the objective values are nonincreasing.
Thus, for $j<m$ in a sequence of nonterminal steps, $f(x^m)\le f(x^{j+1})$.
Applying \eqref{eq:lower-fejer} to the $j$th step with $z=x^m$ gives $\norm{x^{j+1}-x^m}^2\le\norm{x^j-x^m}^2-t_j^2$, which is \eqref{eq:future-fejer}.
In particular, $\norm{x^{j+1}-x^m}\le\norm{x^j-x^m}$.
For $i\le j<m$, applying this comparison successively from $i$ to $j-1$ gives $\norm{x^j-x^m}\le\norm{x^i-x^m}$.
The case $j=m$ is immediate, so the prefix is self-contracted.
\end{proof}

\subsection{Proof of \texorpdfstring{\Cref{lem:length}}{Lemma~\ref*{lem:length}}}
\label{app:length}
It suffices to consider a finite sequence $z^0,\ldots,z^m$.
Let $D$ be its diameter and $\ell_i=\norm{z^{i+1}-z^i}$.
Discard zero-length steps and, for each $i<m$, define
\[
U_i \coloneq \left\{\frac{z^j-z^i}{\norm{z^j-z^i}}:j>i\right\}.
\]
No future point equals $z^i$.
If $i<j<k$, set $a=z^j-z^i$ and $b=z^k-z^i$.
Self-contraction gives $\norm{b-a}\le\norm b$, hence $\ip{a}{b}\ge\norm a^2/2\ge0$.
Thus the directions in $U_i$ have pairwise nonnegative inner products, and \Cref{lem:acute-directions} gives $v_i\in \mathbb{S}^{d-1}$ with $\ip{v_i}{u}\ge1/\sqrt d$ for every $u\in U_i$.
Suppose first that $d\ge2$.
Take a $\delta=1/(2\sqrt d)$ net $V$ and choose $w_i\in V$ with $\norm{w_i-v_i}\le\delta$.
Then $\ip{w_i}{u}\ge1/(2\sqrt d)$ for every $u\in U_i$.
For $j>i$, self-contraction gives $\norm{z^{i+1}-z^j}\le\norm{z^i-z^j}$, and therefore $\ell_i\le2\norm{z^i-z^j}$.
It follows that $\ip{w_i}{z^j-z^i}\ge\ell_i/(4\sqrt d)$.
For $w\in V$, define $b_i(w)=\min_{j\ge i}\ip{w}{z^j}$.
These quantities are nondecreasing in $i$.
If $w_i=w$, then $\ip{w}{z^j}\ge\ip{w}{z^i}+\ell_i/(4\sqrt d)$ for every $j>i$, so $b_{i+1}(w)-b_i(w)\ge\ell_i/(4\sqrt d)$.
Summing over the indices with $w_i=w$ gives
\[
\sum\limits_{\substack{i=0\\w_i=w}}^{m-1}\ell_i\le4\sqrt d\bigl(b_m(w)-b_0(w)\bigr)\le4\sqrt d\,D.
\]
Summing over $w\in V$ and using $|V|\le(1+4\sqrt d)^d$ gives the stated constant.
For $d=1$, all future points lie on the same side of the current point.
Let $I_i=[\min_{j\ge i}z^j,\max_{j\ge i}z^j]$.
Then $z^i$ is an endpoint of $I_i$.
The estimate $\ell_i\le2\norm{z^i-z^j}$ for every $j>i$ shows that removing $z^i$ moves this endpoint inward by at least $\ell_i/2$.
Hence
\[
\ell_i\le2\bigl(|I_i|-|I_{i+1}|\bigr),
\]
and telescoping gives total length at most $2D$.
Applying the finite bound to every prefix proves the result for infinite bounded self-contracted sequences.

\subsection{Proof of \texorpdfstring{\Cref{thm:dimension}}{Theorem~\ref*{thm:dimension}}}
\label{proof:termination:2}

\begin{proof}
Let $p^0=\Proj_{\X}(x^0)$. For every iterate before terminality, \eqref{eq:lower-fejer} with $z=p^0$ gives
$\norm{x^k-p^0}\le D_0$. 
Hence every sequence of nonterminal steps has diameter at most $2D_0$. 
By \Cref{lem:self-contracted,lem:length}, its total length is at most $2C_dD_0$. 
Since every nonterminal displacement has length $t_k$, this proves $\sum_{k<K}t_k\le2C_dD_0$, which is \eqref{eq:length-budget}.
Now suppose the radius is constant. 
The first $N_t-1$ iterations are nonterminal, so $(N_t-1)t\le2C_dD_0$, and therefore $N_t\le1+\lfloor2C_dD_0/t\rfloor$. The other upper bound in \eqref{eq:dimension-count} follows from \Cref{cor:squared}.
For the lower bound, let $x^{N_t}\in\X$ be the terminal output. The total trajectory length satisfies

\[
D_0\le\norm{x^{N_t}-x^0}
\le\sum\limits_{k=0}^{N_t-1}\norm{x^{k+1}-x^k}
\overset{\textup{(\ref*{eq:bpm-intro})}}{\le} N_tt.
\]

Thus $N_t\ge\lceil D_0/t\rceil$. 
This lower bound is attained in one dimension by $f(x)=|x|$, and together with the fixed-dimensional upper bound yields the stated $\Theta_d(D_0/t)$ complexity.
\end{proof}

\subsection{Proof of \texorpdfstring{\Cref{thm:criterion}}{Theorem~\ref*{thm:criterion}}}
\label{proof:termination:3}
\begin{proof}
Assume first that $\sum_k t_k=\infty$.
If the iteration remained nonterminal indefinitely, then \eqref{eq:length-budget} would give $\sum_{k<K}t_k\le2C_dD_0$ for every $K$, contradicting $\sum_k t_k=\infty$.
Hence a terminal step must occur after finitely many steps, and its output is a minimizer.
Conversely, suppose $T=\sum_k t_k<\infty$.
Take $f(x)=\norm{x}$ and $x^0=(T+1)e_1$.
We claim that
\[
x^k=\left(T+1-\sum\limits_{j=0}^{k-1}t_j\right)e_1.
\]
Indeed, $\norm{x^k}=1+\sum_{j\ge k}t_j>t_k$, so $\B(x^k,t_k)$ does not contain the minimizer $0$.
Since $x^k$ lies on the positive $e_1$-axis, the point in this ball nearest to the origin is $x^k-t_ke_1$.
Thus $x^{k+1}=x^k-t_ke_1$, which gives the claimed formula at index $k+1$.
Every step is therefore nonterminal, so the iteration never attains the minimizer and (ii) fails.
\end{proof}

\subsection{Proof of \texorpdfstring{\Cref{prop:general-radius-cap}}{Proposition~\ref*{prop:general-radius-cap}}}
\label{app:general-radius-cap}
\begin{proof}
There are $n=N_t-1$ nonterminal displacements of length $t$ and a projected terminal displacement $r=D_n\le t$.
Let $p=\Proj_{\X}(x^0)$.
Applying \eqref{eq:lower-fejer} with $z=p$ to the first $n$ iterations gives

\begin{equation}
\label{eq:prefix-fixed-projection}
\norm{x^n-p}^2\overset{\eqref{eq:lower-fejer}}{\le} D_0^2-nt^2.
\end{equation}

Since the terminal step is $x^{n+1}=\Proj_{\X}(x^n)$, the projection inequality gives $\norm{x^{n+1}-p}^2\le\norm{x^n-p}^2-r^2$.
Hence $nt^2+r^2\le D_0^2$, so

\[
S_t=nt+r\overset{\eqref{eq:prefix-fixed-projection}}{\le} nt+\sqrt{D_0^2-nt^2}.
\]

By \Cref{cor:squared}, $N_t\le\lceil D_0^2/t^2\rceil$, and therefore $n\le J$.
The function $n\mapsto n+\sqrt{D_0^2/t^2-n}$ is nondecreasing on the integers $0,\ldots,J$: for $n<J$, its increment is $1+\sqrt{a-1}-\sqrt a\ge0$, where $a=D_0^2/t^2-n>1$.
Thus $S_t\le Jt+\sqrt{D_0^2-Jt^2}$.
Set $q=D_0^2/t^2-J\in(0,1]$.
Then $J=D_0^2/t^2-q$, and hence
\[
Jt+\sqrt{D_0^2-Jt^2}=\frac{D_0^2}{t}+t(\sqrt q-q)\le\frac{D_0^2}{t}+\frac{t}{4},
\]
since $0\le\sqrt q-q\le1/4$ for $q\in[0,1]$.
If $D_0^2/t^2$ is an integer, then $q=1$ and the first bound equals $D_0^2/t$.
If $t\ge D_0$, the initial ball contains a minimizer, so the first step is terminal and nearest-minimizer selection at termination gives $S_t=D_0$.
\end{proof}

\subsection{Proof of \texorpdfstring{\Cref{thm:arbitrary-radii}}{Theorem~\ref*{thm:arbitrary-radii}}}
\label{app:arbitrary-radii}
\begin{proof}
If the initial point is optimal or a terminal step occurs, the stopped sequence is eventually a fixed minimizer. It therefore has finite total length and satisfies \eqref{eq:arbitrary-radius-convergence}. The terminal displacement is finite, although no radius-independent bound on it is asserted.
Suppose henceforth that every step is nonterminal. Under attainment, \Cref{thm:dimension} bounds every partial sum of radii by $2C_dD_0$, so $\sum_k t_k<\infty$. If summability is assumed instead, this conclusion requires no attainment assumption. In either case, full-radius motion gives $\sum_k\norm{x^{k+1}-x^k}=\sum_k t_k<\infty$.
Hence $(x^k)$ is Cauchy in $\R^d$ and converges to some $\bar x$. For $m>k$, the triangle inequality gives $\norm{x^m-x^k}\le\sum_{j=k}^{m-1}t_j$. Letting $m\to\infty$ proves the first estimate in \eqref{eq:radius-tail-error}.
Choose a radial subgradient $g^k\in\partial f(x^{k+1})$ at each step. By \Cref{prop:certificate-monotone}, $\norm{g^k}\le M:=\norm{g^0}$ for all $k\ge0$. The objective values are nonincreasing and also bounded below. Indeed, the subgradient inequality at $x^1$ gives
\[
f(x^k)\ge f(x^1)+\ip{g^0}{x^k-x^1}\qquad(k\ge1),
\]
and the convergent sequence $(x^k)$ is bounded. Hence $f(x^k)\downarrow\ell$ for some $\ell\in\R$. Lower semicontinuity gives $f(\bar x)\le\ell<\infty$, so $\bar x\in\dom f$.
For every $k\ge1$, applying the subgradient inequality at $x^k$ with subgradient $g^{k-1}$ to $\bar x$ gives

\begin{equation}
\label{eq:limit-value-proof}
f(\bar x)\ge f(x^k)+\ip{g^{k-1}}{\bar x-x^k}\overset{\eqref{eq:acute}}{\ge} f(x^k)-M\norm{\bar x-x^k}.
\end{equation}

Passing to the limit yields $f(\bar x)\ge\ell$. Thus $f(\bar x)=\ell$, proving value convergence. The same inequality, together with $f(x^k)\ge\ell$ and the distance tail estimate, gives

\[
0\le f(x^k)-f(\bar x)\overset{\eqref{eq:limit-value-proof}}{\le} M\sum\limits_{j=k}^\infty t_j,
\]

which is the second estimate in \eqref{eq:radius-tail-error}. No assumption on $\partial f(x^0)$ is needed, since the first nonterminal output supplies the initial subgradient.
\end{proof}

\subsection{Proof of \texorpdfstring{\Cref{cor:tail-length}}{Corollary~\ref*{cor:tail-length}}}
\label{app:tail-length}
\begin{proof}
The limit exists by \Cref{thm:arbitrary-radii}.
Fix $k$ and let $p=\Proj_{\X}(x^k)$.
For every nonterminal step after $k$, \eqref{eq:lower-fejer} with $z=p$ shows that the distance to $p$ cannot increase.
If a terminal step occurs, nearest-minimizer selection at termination and the projection inequality give the same conclusion.
Hence $\norm{x^i-p}\le\norm{x^k-p}=D_k$ for every $i\ge k$, so the tail has diameter at most $2D_k$.
The tail is also self-contracted.
Indeed, if the update from $x^i$ is nonterminal and $j>i$, monotonicity gives $f(x^j)\le f(x^{i+1})$, so \eqref{eq:lower-fejer} yields $\norm{x^{i+1}-x^j}\le\norm{x^i-x^j}$.
If the update from $x^i$ is terminal, every future point equals $x^{i+1}$ by the stopping convention.
Iterating these comparisons proves self-contraction.
Applying \Cref{lem:length} to the tail gives $L_k\le2C_dD_k$, while the triangle inequality on finite tails and passage to the limit give $\norm{x^k-\bar x}\le L_k$.
If $D_k\le A\rho^k$, substitution gives the stated geometric bounds.
Moreover, $D_k\to0$, and since $\X$ is closed, $x^k\to\bar x$ implies $\bar x\in\X$.
Finally, nearest-minimizer selection at termination is essential for a radius-independent bound on the terminal displacement.
Take $f(x,y)=x^2/2$ and $x^0=(1,0)$.
For any $t>1$, an exact ball oracle may return the minimizer $(0,\sqrt{t^2-1})$, whose displacement from $x^0$ is $t$, although $D_0=1$.
Nearest-minimizer selection at termination instead returns $(0,0)$.
On an infinite nonterminal run, BPM with arbitrary terminal selection and BPM with nearest-minimizer selection coincide, so no additional terminal-selection assumption is needed there.
\end{proof}

\subsection{Proof of \texorpdfstring{\Cref{thm:hard}}{Theorem~\ref*{thm:hard}}}
\label{proof:termination:4}
\begin{proof}
Let $G$ be the $n\times n$ tridiagonal matrix with diagonal entries one and adjacent entries $\epsilon\in(0,1/2)$. For every $v\in\R^n$,
\[
v^\top Gv=\sum\limits_{j=0}^{n-1}v_j^2+2\epsilon\sum\limits_{j=0}^{n-2}v_jv_{j+1}\ge(1-2\epsilon)\norm v^2,
\]
so $G$ is positive definite. Choose unit vectors $n_0,\ldots,n_{n-1}$ with Gram matrix $G$, and define
\[
x^0=0,\qquad x^{j+1}=x^j-tn_j,\qquad x^\star=x^n,\qquad \rho=\frac{\epsilon}{1+\epsilon}.
\]
For $j=0,\ldots,n-2$, set $u_j=x^{j+1}$, $a_j=\rho^j$, $g_j=a_jn_j$, and $F_j=a_jt\epsilon$. Let $\eta=a_{n-2}\epsilon/[2(1+\epsilon)]$ and define
\begin{equation}\label{eq:hard-function}
f(z)=\max\left\{0,\ \max_{0\le j\le n-2}[F_j+\ip{g_j}{z-u_j}],\ \max_{0\le j<n}\eta\left|\ip{n_j}{z-x^\star}\right|\right\}.
\end{equation}
After expanding the absolute values, $f$ is the maximum of finitely many affine functions and is therefore finite, closed, convex, and piecewise affine.
We first verify that the affine piece with gradient $g_i$ is active at $u_i$. For $i>j$, $\ip{g_j}{u_i-u_j}=-a_jt\epsilon$, so $F_j+\ip{g_j}{u_i-u_j}=0\le F_i$. If $i=j-1$, then $F_j+\ip{g_j}{u_i-u_j}=F_j+a_jt=F_{j-1}$. If $i\le j-2$, then
\[
F_j+\ip{g_j}{u_i-u_j}=F_j+a_jt(1+\epsilon)\le F_i,
\]
because $F_i-F_j\ge a_jt((1+\epsilon)^2/\epsilon-\epsilon)\ge a_jt(1+\epsilon)$. Finally, when $i=j$ the value is exactly $F_i$. Thus none of the other affine pieces exceeds $F_i$ at $u_i$.
At $x^\star$, each of these affine pieces vanishes. Indeed, $x^\star-u_j=-t\sum_{r=j+1}^{n-1}n_r$, and only the $r=j+1$ term has nonzero inner product with $n_j$, giving $F_j+\ip{g_j}{x^\star-u_j}=F_j-a_jt\epsilon=0$.
It remains to control the symmetric closing pieces. Since $u_i-x^\star=t\sum_{r=i+1}^{n-1}n_r$, for all relevant $i$ and $j$,

\begin{equation}
\label{eq:closing-bound}
0\le\ip{n_j}{u_i-x^\star}\le t(1+2\epsilon).
\end{equation}

Hence

\[
\eta\left|\ip{n_j}{u_i-x^\star}\right|\overset{\eqref{eq:closing-bound}}{\le}\frac{a_{n-2}t\epsilon(1+2\epsilon)}{2(1+\epsilon)}\le a_it\epsilon=F_i.
\]

Therefore $f(u_i)=F_i$, and the active affine piece gives $g_i\in\partial f(u_i)$.
We now verify that $u_i$ is the exact radius-$t$ response at $x^i$. Since $g_i=(a_i/t)(x^i-u_i)$ and $\norm{x^i-u_i}=t$, every $z\in\B(x^i,t)$ satisfies
\[
\ip{g_i}{z-u_i}=\frac{a_i}{t}\bigl(\ip{x^i-u_i}{z-x^i}+t^2\bigr)\ge0.
\]
The subgradient inequality then gives $f(z)\ge f(u_i)$, so $u_i$ is an exact ball minimizer.
Moreover, $f(x^\star)=0$. If $f(z)=0$, the symmetric pieces in \eqref{eq:hard-function} force $\ip{n_j}{z-x^\star}=0$ for every $j$. Since $G$ is positive definite, the vectors $n_0,\ldots,n_{n-1}$ form a basis of $\R^n$, and hence $z=x^\star$. Thus $x^\star$ is the unique minimizer.
For $j\le n-2$,

\begin{equation}
\label{eq:hard-tail-distance}
\norm{x^j-x^\star}^2=t^2\bigl[(n-j)+2(n-j-1)\epsilon\bigr]>t^2.
\end{equation}

Thus the first $n-1$ iterations are nonterminal, and their exact outputs are uniquely the prescribed points $x^{j+1}$. At $x^{n-1}$, the distance to $x^\star$ is exactly $t$, so the $n$th step is terminal. Therefore $N_t=n$.
Taking $j=0$ in the same distance formula gives

\[
D_0^2\overset{\eqref{eq:hard-tail-distance}}{=}t^2[n+2(n-1)\epsilon],
\]

which is \eqref{eq:hard-distance}. If $\epsilon=o(1/n)$, then $D_0^2/t^2=n+o(1)$, and since the excess over $n$ is positive and tends to zero, $\lceil D_0^2/t^2\rceil=n+1$ for all sufficiently large $n$. Hence $N_t/\lceil D_0^2/t^2\rceil=n/(n+1)\to1$. Finally, a dimension-independent bound $N_t\le CD_0/t$ would imply $n\le C\sqrt{n+o(1)}$, which is impossible as $n\to\infty$.
\end{proof}

\subsection{Proof of \texorpdfstring{\Cref{prop:oracle-transformations}}{Proposition~\ref*{prop:oracle-transformations}}}
\label{app:reformulations}
\begin{proof}
For part~(i), under the change of variables $x=a+sQy$, orthogonality gives $\norm{x-x_0}=s\norm{y-y_0}$. Thus the radius-$t$ ball centered at $y_0$ corresponds exactly to the radius-$st$ ball centered at $x_0=a+sQy_0$. Substitution preserves objective values and therefore identifies the corresponding exact ball-minimizer sets.
For part~(ii), strict monotonicity of $\chi$ gives $f(x)\le f(y)$ if and only if $\widetilde f(x)\le\widetilde f(y)$ for all $x,y\in\dom f$. Since the center of every ball call lies in the domain and has finite objective value, points outside the domain, where both objectives equal $+\infty$, cannot be minimizers. Hence $f$ and $\widetilde f$ have the same exact ball-minimizer sets at every center and radius. The same order equivalence shows that their global minimizer sets coincide. Therefore, under the same selections, induction gives identical fixed-radius trajectories and hence identical termination counts.
\end{proof}

\subsection{Proof of \texorpdfstring{\Cref{thm:epigraph-steps}}{Theorem~\ref*{thm:epigraph-steps}}}
\label{app:epigraph-steps}
We first record the subgradients of $F$ at a graph point. If $(v,\beta)\in\partial F(u,f(u))$, testing points vertically above $(u,f(u))$ gives $\lambda:=1-\beta\ge0$. Testing $(y,f(y))$ for $y\in\dom f$ then gives
\begin{equation}\label{eq:epigraph-subgradient-proof}
\ip v{y-u}\le\lambda(f(y)-f(u)).
\end{equation}
Conversely, these conditions imply the subgradient inequality at every point of the epigraph. If $\lambda>0$, \eqref{eq:epigraph-subgradient-proof} is equivalent to $v/\lambda\in\partial f(u)$. If $\lambda=0$, it means $v\in N_{\dom f}(u)$, where $\ip v{y-u}\le0$ for every $y\in\dom f$. No closedness of $\dom f$ is needed. At a point $(u,s)$ strictly above the graph, testing both $(u,f(u))$ and points vertically above $(u,s)$ forces $\beta=1$, and then $v\in N_{\dom f}(u)$.
\begin{proof}
The epigraph is nonempty, closed, and convex, so $F$ is proper, closed, and convex. Its infimum and minimizers follow by minimizing first over $s\ge f(x)$.
Consider a nonterminal output $(u,s)$ from $z=(x,f(x))$. By \Cref{lem:certificate} applied to $F$, there is $c>0$ such that

\[
(v,\beta)\overset{\eqref{eq:radial}}{=}c(x-u,f(x)-s)\in\partial F(u,s),\qquad \norm{z-(u,s)}\overset{\eqref{eq:radial}}{=}t.
\]

Since the step is nonterminal, its objective value is strictly smaller than that of the center, so $s<f(x)$. Suppose $s>f(u)$. The preceding subgradient characterization gives $v\in N_{\dom f}(u)$. Testing $x\in\dom f$ then gives $c\norm{x-u}^2\le0$, so $u=x$, contradicting $f(u)\le s<f(x)$. Hence every nonterminal output lies on the graph. Terminal outputs also lie on the graph because $\argmin F=\X\times\{\fs\}$. Induction therefore shows that every iterate remains on the graph.
Set $\delta=f(x)-f(u)>0$. At the graph output, the subgradient characterization gives $\lambda:=1-c\delta\ge0$. If $\lambda=0$, then $c(x-u)\in N_{\dom f}(u)$, and testing $x$ again forces $u=x$, contradicting $\delta>0$. Thus $0<\lambda<1$, and

\[
g=\frac{c}{\lambda}(x-u)\overset{\eqref{eq:epigraph-subgradient-proof}}{\in}\partial f(u),\qquad (v,\beta)=(\lambda g,1-\lambda).
\]

In particular, $r=\norm{x-u}>0$, while the full-radius identity gives $t^2=r^2+\delta^2$. If $\norm{y-x}\le r$, then $\ip{x-u}{y-u}\ge r^2-r\norm{y-x}\ge0$, and the subgradient inequality yields $f(y)\ge f(u)$. Hence $u\in\brox_f^r(x)$, proving \eqref{eq:epigraph-radius} and \eqref{eq:epigraph-certificate}.
Conversely, let $u$ be a nonterminal radius-$r$ \BPM{} output for $f$ from $x$. By \Cref{lem:certificate}, there is $a>0$ such that $g=a(x-u)\in\partial f(u)$, with $\norm{x-u}=r$ and $\delta=f(x)-f(u)>0$. Set $\lambda=(1+a\delta)^{-1}$ and $c=a(1+a\delta)^{-1}$. Then $(\lambda g,1-\lambda)=c(x-u,\delta)$ belongs to $\partial F(u,f(u))$ by \eqref{eq:epigraph-subgradient-proof}. Since this subgradient is radial from $(x,f(x))$ to $(u,f(u))$, it certifies an exact lifted step of radius $\sqrt{r^2+\delta^2}$.
Finally, consider a terminal lifted step of radius $t$. Its output has the form $(u,\fs)$ with $u\in\X$, and feasibility gives $\norm{x-u}^2+\Delta^2\le t^2$. Thus $u$ is an exact terminal output for $f$ at radius $\sqrt{t^2-\Delta^2}\ge D$. Conversely, any terminal radius-$r$ output $u\in\X$ for $f$ satisfies $\norm{x-u}\le r$, so $(u,\fs)$ is feasible for the lifted ball of radius $\sqrt{r^2+\Delta^2}$ and is therefore a terminal lifted output. This proves the terminal correspondences and completes the proof.
\end{proof}

\subsection{Proof of \texorpdfstring{\Cref{prop:epigraph-measures}}{Proposition~\ref*{prop:epigraph-measures}}}
\label{app:epigraph-measures}
\begin{proof}
Since $\argmin F=\X\times\{\fs\}$ and $F(x,f(x))=f(x)$, the gap and distance identities in \eqref{eq:epigraph-distance} follow immediately. Applying \Cref{thm:jensen,cor:squared} to $F$ with initial distance $R_0$ gives \eqref{eq:epigraph-value}.
Suppose first that $\partial f(x)\ne\varnothing$. By \eqref{eq:epigraph-subgradient-proof}, for fixed $\lambda>0$ the minimum squared norm of a lifted subgradient $(\lambda g,1-\lambda)$ is $\lambda^2m(x)^2+(1-\lambda)^2$. Minimizing over $\lambda\ge0$ gives
\[
\lambda=\frac{1}{1+m(x)^2},\qquad \min_{\lambda\ge0}\bigl[\lambda^2m(x)^2+(1-\lambda)^2\bigr]=\frac{m(x)^2}{1+m(x)^2}.
\]
The branch $\lambda=0$ has squared norm $\norm v^2+1\ge1$, while $(0,1)\in\partial F(x,f(x))$. If $\partial f(x)=\varnothing$, no branch with $\lambda>0$ is available, so the minimum subgradient norm of $F$ is one. This proves \eqref{eq:epigraph-stationarity}. Solving the resulting inequality for $m(x)$ gives the stated inverse estimate.
For the paired subgradients, $G=(\lambda g,1-\lambda)$ and $\mathcal G_{x^\star}(u,g)=\ip g{u-x^\star}$, so

\begin{equation}
\label{eq:lifted-pairing-proof}
\ip G{(u,f(u))-(x^\star,\fs)}\overset{\eqref{eq:epigraph-certificate},\eqref{eq:pd-gap}}{=}\lambda\mathcal G_{x^\star}(u,g)+(1-\lambda)(f(u)-\fs),
\end{equation}

which is \eqref{eq:epigraph-dual-gap}. Since the primal gap is $f(u)-\fs$, subtracting it from the lifted symmetric Bregman distance gives

\[
\begin{aligned}D_F^G((x^\star,\fs),(u,f(u)))&\overset{\eqref{eq:lifted-pairing-proof},\eqref{eq:epigraph-distance}}{=}\lambda\bigl(\mathcal G_{x^\star}(u,g)-(f(u)-\fs)\bigr)\\&\overset{\eqref{eq:pd-forward},\eqref{eq:pd-reverse}}{=}\lambda D_f^g(x^\star,u).\end{aligned}
\]

\end{proof}

\subsection{Proof of \texorpdfstring{\Cref{thm:epigraph-budgets}}{Theorem~\ref*{thm:epigraph-budgets}}}
\label{app:epigraph-budgets}
\begin{proof}
On an infinite nonterminal corresponding run, $r_k\le t_k=\sqrt{r_k^2+\delta_k^2}\le r_k+\delta_k$.
Thus $\sum_k t_k<\infty$ immediately implies $\sum_k r_k<\infty$.
Conversely, suppose $\sum_k r_k<\infty$.
Then $(x^k)$ converges and is contained in a compact set.
Since $f$ is proper and closed, it is bounded below on this set.
As the values are nonincreasing, they converge to a finite limit, and hence $\sum_k\delta_k=f(x^0)-\lim_k f(x^k)<\infty$.
The preceding inequality then gives $\sum_k t_k<\infty$.
Apply \Cref{thm:general-convergence} to the projected \BPM{} run with radii $r_k$.
If $\sum_k t_k=\infty$, an infinite nonterminal run has $\sum_k r_k=\infty$.
Under attainment this is impossible, so finite termination occurs; without attainment, $f(x^k)\downarrow\inf f$ and $\norm{x^k}\to\infty$.
If $\sum_k t_k<\infty$, then $\sum_k r_k<\infty$, and an infinite projected run satisfies $x^k\to\bar x\in\dom f$ and $f(x^k)\downarrow f(\bar x)$.
Finite termination gives the corresponding conclusions immediately.
Under attainment, \Cref{thm:dimension} gives $\sum_{k<K}r_k\le2C_dD_0$ for every sequence of nonterminal steps.
Summing $t_k\le r_k+\delta_k$ and using $\sum_{k<K}\delta_k=\Delta_0-\Delta_K$ gives \eqref{eq:epigraph-length-budget}; the strict inequality follows from $\Delta_K>0$.
Let $B=2C_dD_0+\Delta_0$.
For a constant lifted radius $t$, if the first $M=\lceil B/t\rceil$ iterations were all nonterminal, then \eqref{eq:epigraph-length-budget} would give $Mt<B$, contradicting $Mt\ge B$.
This gives the second bound in \eqref{eq:epigraph-count}, while the first follows from \eqref{eq:epigraph-value}.
Finally, under nearest-minimizer selection at termination for $F$, projection onto $\argmin F=\X\times\{\fs\}$ selects the nearest original minimizer in the horizontal coordinate.
By \Cref{cor:tail-length}, the total projected length is at most $2C_dD_0$.
The total vertical variation is at most $\Delta_0$, so the triangle inequality bounds the total lifted trajectory length by $2C_dD_0+\Delta_0$.
\end{proof}

\subsection{Verification of \texorpdfstring{\Cref{ex:epigraph-scaling}}{Example~\ref*{ex:epigraph-scaling}}}
\label{app:epigraph-scaling}
\begin{proof}
For $x>0$, a radius-$t$ BPM step gives $(x-t)_+$, so $N_t=\lceil D_0/t\rceil$.
For the lifted method, \Cref{thm:epigraph-steps} shows that every nonterminal output projects to $u=x-r>0$ for some $r>0$.
Since $f(x)-f(u)=ar$, \eqref{eq:epigraph-radius} gives $t^2=(1+a^2)r^2$, hence $r=t/\sqrt{1+a^2}$.
A lifted step is terminal exactly when its ball contains $(0,0)$, that is, when $t\ge\sqrt{1+a^2}\,x$.
Thus each nonterminal step advances a lifted distance $t$ along the segment from $(D_0,aD_0)$ to $(0,0)$, whose length is $\sqrt{1+a^2}\,D_0$, and therefore
\[
N_t^{\rm epi}=\left\lceil\frac{\sqrt{1+a^2}\,D_0}{t}\right\rceil.
\]
Since $N_t$ is independent of $a$, letting $a\to\infty$ gives the asserted unbounded separation.
\end{proof}

\subsection{Proof of \texorpdfstring{\Cref{thm:certificate-calibration}}{Theorem~\ref*{thm:certificate-calibration}}}
\label{app:general-calibration}
\begin{proof}
By monotonicity of $\partial f$, $\ip{h^k-g^k}{x^k-x^{k+1}}\ge0$.
Since $x^k-x^{k+1}=\lambda_{k+1}g^k$ with $\lambda_{k+1}>0$, this gives $\ip{h^k-g^k}{g^k}\ge0$, and hence $\norm{g^k}\le\norm{h^k}$.
Therefore

\begin{equation}
\label{eq:calibration-parameter-proof}
\lambda_{k+1}\overset{\eqref{eq:prox-equivalence}}{=}\frac{t_k}{\norm{g^k}}=\tau\frac{\norm{h^k}}{\norm{g^k}}\ge\tau.
\end{equation}

By \eqref{eq:prox-equivalence}, every nonterminal update satisfies $x^{k+1}=\prox_{\lambda_{k+1}f}(x^k)$.
The tight proximal-point estimate of \citet[Theorem~4.1]{TaylorHendrickxGlineur2017} therefore gives

\[
\Delta_K\le\frac{D_0^2}{4\sum\limits_{k=0}^{K-1}\lambda_{k+1}}\overset{\eqref{eq:calibration-parameter-proof}}{\le}\frac{D_0^2}{4\tau K}.
\]

After termination, $\Delta_K=0$ by \assref{ass:iteration}, so the final bound remains valid for the stopped sequence.

\emph{Sharpness.}
Fix $D>0$, $\tau>0$, and an integer $K\ge1$, and take $f(x)=a|x|$ on $\R$, with $x^0=D$ and $a=D/(2\tau K)$.
The minimizer is $0$, so $D_0=D$.
For $0\le k<K$, the unique subgradient at the positive center is $h^k=a$, and the prescribed radius is $t_k=\tau a=D/(2K)$.
Induction gives $x^k=D-kD/(2K)$ for $0\le k\le K$, so all $K$ iterations are nonterminal and $x^K=D/2$.
Their radial subgradients are $g^k=a$, hence
\[
\lambda_{k+1}\overset{\eqref{eq:prox-equivalence}}{=}\frac{t_k}{|g^k|}=\tau,
\qquad
\Delta_K=\frac{aD}{2}=\frac{D_0^2}{4\tau K}
=\frac{D_0^2}{4\sum\limits_{k=0}^{K-1}\lambda_{k+1}}.
\]
Thus both upper bounds are attained for every prescribed horizon $K$.
\end{proof}

\subsection{Proof of \texorpdfstring{\Cref{thm:gap-power}}{Theorem~\ref*{thm:gap-power}}}
\label{proof:calibration:1}
\begin{proof}
If termination occurs, the claim is immediate. Before termination, $t_k<D_k\le D_0$, so \Cref{thm:segment} gives

\begin{equation}
\label{eq:power-recurrence-proof}
\Delta_{k+1}\overset{\eqref{eq:segment}}{\le}\Delta_k\left(1-\frac{t_k}{D_k}\right)\le\Delta_k(1-a\Delta_k^\alpha),\qquad a=\tau/D_0,
\end{equation}

with $0<a\Delta_k^\alpha<1$. Since $(1-z)^{-\alpha}\ge1+\alpha z$ for $0\le z<1$, we obtain

\[
\Delta_{k+1}^{-\alpha}\overset{\eqref{eq:power-recurrence-proof}}{\ge}\Delta_k^{-\alpha}(1-a\Delta_k^\alpha)^{-\alpha}\ge\Delta_k^{-\alpha}+\alpha a.
\]

Summing over $k=0,\ldots,K-1$ gives $\Delta_K^{-\alpha}\ge\Delta_0^{-\alpha}+\alpha\tau K/D_0$, and inversion yields \eqref{eq:gap-power-rate}. Solving this bound for $\Delta_K\le\varepsilon$ gives the stated iteration count.
\end{proof}

\subsection{Proof of \texorpdfstring{\Cref{prop:gap-extensions}}{Proposition~\ref*{prop:gap-extensions}}}
\label{proof:calibration:2}
\begin{proof}
Since $\ell\le\fs$, we have $f(x^k)-\ell\ge\Delta_k$, and hence $t_k\ge\tau\Delta_k^\alpha$. 
If a step is terminal, the objective-gap bound is immediate. Otherwise, \Cref{thm:segment} gives the same upper recurrence as in the proof of \Cref{thm:gap-power}, so \eqref{eq:gap-power-rate} follows without comparing the two trajectories.
If $\ell<\fs$, then every nonoptimal iterate satisfies $t_k\ge\tau(\fs-\ell)^\alpha>0$. 
Thus, if the first $K$ iterations were all nonterminal, \Cref{cor:squared} would give $K\tau^2(\fs-\ell)^{2\alpha}<D_0^2$. 
Therefore, among the first
\[
\left\lceil\frac{D_0^2}{\tau^2(\fs-\ell)^{2\alpha}}\right\rceil
\]
iterations, at least one must be terminal.
\end{proof}

\subsection{Proof of \texorpdfstring{\Cref{thm:general-relaxed}}{Theorem~\ref*{thm:general-relaxed}}}
\label{app:general-relaxation}
\begin{proof}
Let $p=T_t(x)$ and $d=x-p$.
For every $z\in\X$, the radial subgradient in the nonterminal case and the projection property in the terminal case give
\begin{equation}\label{eq:relax-cutter-proof}
\ip d{p-z}\ge0.
\end{equation}
Since $R_{t,\lambda}(x)=x-\lambda d$ and $x-z=d+(p-z)$,

\[
\norm{R_{t,\lambda}(x)-z}^2\overset{\eqref{eq:relax-cutter-proof}}{\le}\norm{x-z}^2-\lambda(2-\lambda)\norm d^2.
\]

Because $\norm d=\min\{t,D(x)\}$, this is \eqref{eq:general-relaxed-fejer}.
Summing \eqref{eq:general-relaxed-fejer} along the iterates gives $\min\{t,D(x^k)\}\to0$, and hence $D(x^k)\to0$.
The sequence is Fej\'er monotone with respect to $\X$ and therefore bounded.
Any cluster point belongs to $\X$ because $D(x^k)\to0$ and $\X$ is closed.
Let $\bar x\in\X$ be such a cluster point.
Then $\norm{x^k-\bar x}$ is nonincreasing by \eqref{eq:general-relaxed-fejer}, while a subsequence converges to $\bar x$.
Thus $\norm{x^k-\bar x}\to0$, so the whole sequence converges to $\bar x$.
Finally, if $D(x)\le t$, then $p=\Proj_{\X}(x)$ and
\[
D(R_{t,\lambda}(x))\le\norm{R_{t,\lambda}(x)-p}=|1-\lambda|D(x),
\]
which proves the final estimate.
\end{proof}

\subsection{Proof of \texorpdfstring{\Cref{thm:identity}}{Result~\ref*{thm:identity}}}
\label{proof:foundation:2}
\begin{proof}
Expanding $x^k-z=(x^k-u)+(u-z)$ gives the cross term

\[
\begin{aligned}\ip{x^k-u}{u-z}&\overset{\eqref{eq:radial}}{=}c_k^{-1}\ip{g^k}{u-z}\\&\overset{\eqref{eq:bregman-definition}}{=}c_k^{-1}\bigl[f(u)-f(z)+D_f^{g^k}(z,u)\bigr].\end{aligned}
\]

which yields \eqref{eq:identity}. 
Evaluating the definition of the Bregman distance at $z=x^k$ and using
$\ip{g^k}{x^k-u}=c_kt_k^2=t_k\norm{g^k}$
gives \eqref{eq:drop}. 
Finally, if $f(z)\le f(u)$, then $f(u)-f(z)\ge0$ and $D_f^{g^k}(z,u)\ge0$, so \eqref{eq:lower-fejer} follows directly from \eqref{eq:identity}.
\end{proof}

\subsection{Proof of \texorpdfstring{\Cref{cor:squared}}{Result~\ref*{cor:squared}}}
\label{proof:foundation:3}
\begin{proof}
For a nonterminal step, take $z=\Proj_{\X}(x^k)$ in \eqref{eq:lower-fejer}. Since $z\in\X$, minimizing the resulting distance over $\X$ gives $D_{k+1}^2\le D_k^2-t_k^2$.
If the first $K$ iterations were all nonterminal, telescoping would yield
$0<D_K^2\le D_0^2-\sum_{k<K}t_k^2$. 
Hence $\sum_{k<K}t_k^2\ge D_0^2$ forces a terminal step among the first $K$ iterations. 
For $t_k=t$, taking $K=\lceil D_0^2/t^2\rceil$ gives \eqref{eq:squared-count}. 
Finally, before termination the same telescoping argument applies, while at and after termination $D_K=0$, which gives the stated bound for the stopped sequence.
\end{proof}

\subsection{Proof of \texorpdfstring{\Cref{prop:minimum-subgradient}}{Result~\ref*{prop:minimum-subgradient}}}
\label{proof:foundation:5}
\begin{proof}
Suppose first that $u$ is a nonterminal output, and let
$g=c(x-u)\in\partial f(u)$ be its radial subgradient. Since $\norm{x-u}=t$, we have $(x-u)/t=g/\norm g$. For any
$p\in\partial f(x)$, monotonicity of $\partial f$ gives $\ip{p-g}{x-u}\ge0$, and hence $\ip{p}{(x-u)/t}\ge\norm g\ge m(u)$.
By Cauchy--Schwarz, $\norm p\ge m(u)$. Taking the infimum over
$p\in\partial f(x)$ yields $m(x)\ge m(u)$.
If $u$ is terminal, then $u$ is a global minimizer, so
$0\in\partial f(u)$ and therefore $m(u)=0$. Thus $m(u)\le m(x)$
in all cases.
\end{proof}

\subsection{Proof of \texorpdfstring{\Cref{cor:geometric-stationarity}}{Result~\ref*{cor:geometric-stationarity}}}
\label{app:stationarity-transfer}
\begin{proof}
Suppose first that the $K$-th output is nonterminal, and let
$g^{K-1}\in\partial f(x^K)$ be its radial subgradient. 
By \eqref{eq:drop},

\begin{equation}
\label{eq:last-drop}
t_{K-1}\norm{g^{K-1}} \overset{\eqref{eq:drop}}{\le} \Delta_{K-1}-\Delta_K.
\end{equation}

Since $m(x^K)\le\norm{g^{K-1}}$ and $\Delta_K\ge0$, division by
$t_{K-1}>0$ gives

\[
m(x^K) \overset{\eqref{eq:last-drop}}{\le} \frac{\Delta_{K-1}-\Delta_K}{t_{K-1}} \le \frac{\Delta_{K-1}}{t_{K-1}},
\]

which is \eqref{eq:local-stationarity}.
If a terminal output has already occurred, then $0\in\partial f(x^K)$ and hence $m(x^K)=0$, so the same inequalities hold.
Now let $t_k=t$ with $0<t<D_0$. By \eqref{eq:frozen},

\[
\Delta_{K-1} \overset{\eqref{eq:frozen}}{\le} \left(1-\frac{t}{D_0}\right)^{K-1}\Delta_0.
\]

Substituting this into \eqref{eq:local-stationarity} gives
\eqref{eq:geometric-stationarity}. 
Finally, if $f$ is differentiable, $\partial f(x^K)=\{\nabla f(x^K)\}$, so $m(x^K)=\norm{\nabla f(x^K)}$.
\end{proof}

\section{Geometric radius schedules and the minimum successful factor}
\label{app:geometric-factor}
\label{sec:geometric-factor}
\Cref{thm:general-convergence,thm:criterion} leave the behavior of summable radius schedules problem dependent. 
As \Cref{ex:radius-outcomes} shows, even for the same objective and starting point, one summable schedule may attain a minimizer, another may converge to one only asymptotically, and another may converge to a nonoptimal point. 
We now examine how these possibilities arise within the one-parameter family
\[
 t_k=t\rho^k,\qquad 0<\rho<1.
\]
For fixed $f$, $x^0$, and $t$, only the decay factor $\rho$ varies. The question is whether there is a smallest value of $\rho$ for which the trajectory still converges to a minimizer.
Write
\[
 \mathcal T_k(\rho)=\sum\limits_{j=k}^\infty t\rho^j=\frac{t\rho^k}{1-\rho}
\]
for the total radius remaining after iteration $k$. 
Call $\rho\in(0,1)$ successful if the corresponding trajectory converges to a point in $\X$.
The theorem below shows that the successful factors have a minimum $\rho_\star$.
At this minimum successful factor, the trajectory converges to a minimizer without finite termination, and $\rho_\star$ can be characterized through finite prefixes.

\begin{theorem}[Minimum successful geometric factor]\label{thm:geometric-factor}
Under \assref{ass:convex}, \assref{ass:attained}, and \assref{ass:iteration}, fix $x^0\in\dom f$ and $t>0$, and let $D_k(\rho)=\dist(x^k(\rho),\X)$.
If $t\ge D_0$, every $\rho\in(0,1)$ attains a minimizer in at most one step, so there is no smallest positive successful factor.
Suppose $0<t<D_0$.
The first output $x^1$ is independent of $\rho$.
Set $d_1=\dist(x^1,\X)>0$ and
\[
a=\frac{d_1}{t+d_1},\qquad b=\frac{d_1}{\sqrt{t^2+d_1^2}}.
\]
Then:
\begin{enumerate}[label=(\roman*)]
\item The successful factors have a minimum $\rho_\star$, with
\begin{equation}\label{eq:geometric-bracket}
0<a\le\rho_\star<b<1.
\end{equation}
Every $\rho\in[b,1)$ attains a minimizer finitely, whereas every $\rho<\rho_\star$ converges to a nonoptimal limit.
\item The trajectory at $\rho_\star$ never attains a minimizer finitely, but converges to some $\bar x\in\X$. For every $k\ge0$,
\begin{equation}\label{eq:critical-geometric-rate}
\frac{t\rho_\star^k}{2C_d(1-\rho_\star)}
\le D_k(\rho_\star)\le\norm{x^k(\rho_\star)-\bar x}
\le\frac{t\rho_\star^k}{1-\rho_\star}.
\end{equation}
Consequently,
\[
D_k(\rho_\star)^{1/k}\to\rho_\star,\qquad
\norm{x^k(\rho_\star)-\bar x}^{1/k}\to\rho_\star.
\]
If $M=\norm{g^0}$ for any radial subgradient from the first step, then $\Delta_k\le Mt\rho_\star^k/(1-\rho_\star)$ for every $k\ge1$.
\item A factor $\rho$ is successful if and only if
\begin{equation}\label{eq:geometric-prefix-test}
D_k(\rho)\le\mathcal T_k(\rho)\qquad\text{for every }k\ge1.
\end{equation}
For $K\ge1$, define
\[
A_K=\{\rho\in[a,b]:D_j(\rho)\le\mathcal T_j(\rho),\ 1\le j\le K\}.
\]
Each $A_K$ is nonempty and compact, and
\begin{equation}\label{eq:geometric-prefix-minimum}
\rho_\star=\lim_{K\to\infty}\min A_K,\qquad
\min A_K\ \text{is nondecreasing in }K.
\end{equation}
\end{enumerate}
\end{theorem}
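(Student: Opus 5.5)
The plan is to reduce success to a per-iterate comparison with the remaining radius, and then use continuity in $\rho$ together with nested compact sets.

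\emph{Prefix test (iii).} First I prove the characterization \eqref{eq:geometric-prefix-test}. The schedule is summable, so \Cref{thm:arbitrary-radii} gives $x^k(\rho)\to\bar x(\rho)$ and $\norm{x^k-\bar x}\le\mathcal T_k(\rho)$.
\begin{itemize}
\item If $\rho$ is successful, then $\bar x\in\X$. Hence $D_k\le\norm{x^k-\bar x}\le\mathcal T_k$ for every $k$.
\item Conversely, if $D_k\le\mathcal T_k\to0$, then $D(\bar x)=0$ because $\dist(\cdot,\X)$ is continuous. So $\bar x\in\X$.
\end{itemize}
The first output $x^1$ does not depend on $\rho$, since $t_0=t$. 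With $d_1=D_1$, the test at $k=1$ reads $d_1\le t\rho/(1-\rho)$, which is equivalent to $\rho\ge a$. This gives $\rho_\star\ge a$.

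\emph{Upper bracket.} For the bracket at $b$, I use the refined decrease \eqref{eq:refined-c} from iteration $1$ onward. Every nonterminal step satisfies $D_{k+1}^2\le D_k^2-t_k^2-2\Delta_{k+1}/c_k$, and the last term is strictly positive. Suppose a run with $\rho\ge b$ never terminated. Telescoping from $k=1$ would give $D_k^2\le d_1^2-\sum_{j=1}^{k-1}t^2\rho^{2j}-\epsilon$ for a fixed $\epsilon>0$ coming from the step-$1$ gap term. The infinite sum equals $t^2\rho^2/(1-\rho^2)\ge d_1^2$ exactly when $\rho\ge b$, so $D_k^2$ would eventually become negative, a contradiction. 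Hence every $\rho\in[b,1)$ terminates finitely.

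\emph{Existence of $\rho_\star$ and \eqref{eq:geometric-prefix-minimum}.} Next I show that each map $\rho\mapsto D_j(\rho)$ is lower semicontinuous. I would prove this by induction on $j$ for the stopped trajectory. A nonterminal output is unique, so the ball-argmin correspondence is outer semicontinuous there; I expect to get this from Berge-type arguments, using the continuity of the ball envelope $F_t(x)$ in $(x,t)$ on the relevant region. After a terminal step $D_j=0$, which is harmless for lower semicontinuity. Given lower semicontinuity:
\begin{itemize}
\item each $A_K$ is closed in $[a,b]$, hence compact;
\item each $A_K$ contains $b$, because a finitely terminating run with the stopped convention satisfies every prefix test;
\item the sets $A_K$ are nested.
\end{itemize}
Therefore $\min A_K$ is nondecreasing in $K$ and converges to $\min\bigcap_K A_K$, which equals $\rho_\star$ by (iii). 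Factors below $\rho_\star$ are unsuccessful. Their limits exist by \Cref{thm:arbitrary-radii} and are nonoptimal.

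\emph{Main obstacle: behaviour at $\rho_\star$ and strictness $\rho_\star<b$.} I must show that the trajectory at $\rho_\star$ does not terminate finitely, and that finite termination at $b$ persists for slightly smaller $\rho$.
\begin{itemize}
\item For termination at $\rho_\star$: suppose it terminates at step $N$. If $D_{N-1}<t\rho_\star^{N-1}$ strictly, then by continuity of the finite prefix in $\rho$, nearby smaller $\rho$ also terminate, contradicting minimality. The boundary case $D_{N-1}=t\rho_\star^{N-1}$ is where I am least sure. There I would try to exploit the unused remaining budget $\mathcal T_N>0$: slightly perturbed runs miss the minimizer by $o(1)$ but still have radius $\approx t\rho^N$ left, which should let them succeed. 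This again contradicts minimality.
\item For $\rho_\star<b$: I apply the same finite-step continuity argument at $\rho=b$, using the slack $\epsilon$ from the refined decrease.
\end{itemize}

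\emph{Rates (ii).} Once the run at $\rho_\star$ is infinite and nonterminal, all rate estimates are short.
\begin{itemize}
\item Every step has full length, so the remaining length is $L_k=\mathcal T_k(\rho_\star)$.
\item \Cref{cor:tail-length} gives $\mathcal T_k\le 2C_dD_k$, which is the lower bound in \eqref{eq:critical-geometric-rate}.
\item $D_k\le\norm{x^k-\bar x}$ holds because $\bar x\in\X$, and $\norm{x^k-\bar x}\le\mathcal T_k$ comes from \eqref{eq:radius-tail-error}.
\item Taking $k$-th roots of \eqref{eq:critical-geometric-rate} yields both limits $D_k(\rho_\star)^{1/k}\to\rho_\star$ and $\norm{x^k(\rho_\star)-\bar x}^{1/k}\to\rho_\star$.
\item The value bound $\Delta_k\le Mt\rho_\star^k/(1-\rho_\star)$ is the second estimate in \eqref{eq:radius-tail-error}, since $f(\bar x)=\fs$.
\end{itemize}
Finally, if $t\ge D_0$, the first ball already contains a minimizer, so every $\rho\in(0,1)$ succeeds in one step and no smallest positive successful factor exists.
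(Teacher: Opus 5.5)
Most of your plan matches the paper: the prefix test via \eqref{eq:radius-tail-error}, the lower bracket $a$ from $k=1$, the upper bracket $b$ via \eqref{eq:refined-c} started at $x^1$, and the rates via \Cref{cor:tail-length}. You establish existence of $\rho_\star$ differently, and validly. You use lower semicontinuity of each $D_j$, which makes the nested sets $A_K$ compact with $\bigcap_K A_K$ equal to the successful factors in $[a,b]$. This needs less than the paper's route, which proves continuity of every $x^k(\rho)$ and of the limit map $z(\rho)$ and deduces that the successful set is relatively closed.

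\textbf{The gap is the step you flag.} You must show that the run at $\rho_\star$ does not terminate finitely when the terminal step lies on the boundary, $D_{N-1}(\rho_\star)=t\rho_\star^{N-1}$. Your sketch relies on perturbed runs missing the minimizer by $o(1)$, that is, $D_N(\rho)\to0$ as $\rho\to\rho_\star$. That is an upper-semicontinuity statement. The lower semicontinuity you built only gives $\liminf D_N(\rho)\ge D_N(\rho_\star)=0$, which says nothing here.

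\textbf{What is missing is continuity of $\rho\mapsto x^N(\rho)$ across the terminal step.} It does hold, by two facts:
\begin{itemize}
\item At $D(y)=r$, \Cref{lem:certificate} makes $\brox_f^r(y)=\{\Proj_{\X}(y)\}$ a singleton.
\item The same envelope (Berge-type) argument you use for nonterminal steps shows that any ball minimizers $u_n$ for $(y_n,r_n)\to(y,r)$, terminal or not, converge to that singleton.
\end{itemize}
Hence $D_N(\rho)\to0<t\rho_\star^N$, so every nearby $\rho$ terminates by step $N$, contradicting minimality. All of (ii) depends on this.

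\textbf{How the paper handles it.} The paper avoids your case split. Using nearest-minimizer selection, it proves that a single selected step is continuous in $(y,r)$ in every regime, including $D(y)=r$. It then shows the set of finitely terminating factors is open. The argument looks at the first optimal iterate $x^n(\rho)\in\X$ and the strictly positive next radius $t\rho^n$, not at the step that terminated. The same openness applied at $\rho=b$ gives $\rho_\star<b$.

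\textbf{Your slack route to $\rho_\star<b$ needs a uniform bound.} As written, your $\epsilon$ depends on $\rho$, and you give no lower bound for it near $b$. You can supply one:
\begin{itemize}
\item For $\rho\le b$, $\Delta_2(\rho)\ge\Delta_2(b)>0$. The ball at the same center $x^1$ is smaller, and step $1$ at $b$ is nonterminal since $d_1>tb$.
\item $c_1=\norm{g^1}/(t\rho)\le\norm{g^0}/(t\rho)$ by \Cref{prop:certificate-monotone}.
\end{itemize}
Together these bound $2\Delta_2/c_1$ below by a positive constant for $\rho$ just below $b$, and your telescoping contradiction then goes through there.
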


The proof is given in \Cref{app:geometric-factor-proofs}. 
The finite-prefix characterization is exact but not intended as an efficient computational procedure, since evaluating $A_K$ requires distances to $\X$ and a global scalar search over $\rho$. 
In particular, the theorem does not assert that the successful factors form an interval. 
If $\rho$ fails, then some finite prefix satisfies $D_k(\rho)>\mathcal T_k(\rho)$, certifying that the remaining radius budget is insufficient to reach any minimizer. 
At $\rho_\star$, every step is nonterminal, so the remaining trajectory length equals $\mathcal T_k(\rho_\star)$ and \eqref{eq:critical-geometric-rate} gives the asymptotic convergence rate.
For $f(x)=|x|$ with $x^0=D_0>t$, direct summation gives $\rho_\star=1-t/D_0=a$. 
In this example, any decrease below $\rho_\star$ destroys convergence to the minimizer.

The dependence on trajectory geometry can be seen by comparing two objectives with the same $D_0/t$.
For the quadratic example in \Cref{sec:four-views}, take $f(z)=\tfrac12(z_1^2+4z_2^2)$, $x^0=(2,5/4)$, and $t=\sqrt2$.
The first output is $x^1=(1,1/4)$, so $D_0=\sqrt{89}/4$ and $d_1=\sqrt{17}/4>D_0-t$.
Hence \eqref{eq:geometric-bracket} gives

\[
\rho_\star\overset{\eqref{eq:geometric-bracket}}{\ge}\frac{d_1}{t+d_1}>1-\frac{t}{D_0}.
\]

For the objective $f(z)=\norm z$ at the same starting point and radius, the trajectory is radial and $\rho_\star=1-t/D_0$.
Thus the two problems have the same initial distance and radius but different minimum successful factors.

\subsection{Proof of \texorpdfstring{\Cref{thm:geometric-factor}}{Theorem~\ref*{thm:geometric-factor}}}
\label{app:geometric-factor-proofs}
\begin{proof}
If $t\ge D_0$, the initial ball meets $\X$, so the first step is terminal for every $\rho\in(0,1)$. Hence assume $0<t<D_0$.
We may use the nearest-minimizer selection at termination, since it leaves all nonterminal iterates and the first terminal step unchanged. Write $x^k(\rho)$ for the resulting stopped trajectory and $z(\rho)=\lim_k x^k(\rho)$, which exists by \Cref{thm:arbitrary-radii}. Since each displacement is at most its radius,
\begin{equation}\label{eq:geometric-uniform-tail}
\norm{z(\rho)-x^k(\rho)}\le\mathcal T_k(\rho).
\end{equation}

\emph{Continuity in $\rho$.}
We first show continuity of a single selected step. Let $y_n\to y$, $r_n\to r>0$, and $f(y_n)\le f(x^0)$. Suppose first that $D(y)\ge r$. The limiting ball problem has a unique selected output $u$: the step is nonterminal if $D(y)>r$, while at $D(y)=r$ uniqueness follows from \Cref{lem:certificate}. Let $u_n$ denote the selected outputs for $(y_n,r_n)$. Since $\norm{u_n-y_n}\le r_n$, the sequence $(u_n)$ is bounded. For fixed $\epsilon\in(0,1)$, set $v_n=(1-\epsilon)u+\epsilon y_n$. Since $\norm{u-y}=r$, we have $\norm{v_n-y_n}\to(1-\epsilon)r<r$, so $v_n$ is feasible for all sufficiently large $n$. Hence
\[
f(u_n)\le f(v_n)\le(1-\epsilon)f(u)+\epsilon f(x^0).
\]
If a subsequence $u_{n_j}$ converges to $\bar u$, then $\bar u$ is feasible for the limiting ball and lower semicontinuity gives $f(\bar u)\le(1-\epsilon)f(u)+\epsilon f(x^0)$. Letting $\epsilon\downarrow0$ yields $f(\bar u)\le f(u)$. Since $u$ minimizes the limiting ball problem, $\bar u$ is also a minimizer, and uniqueness gives $\bar u=u$. Thus $u_n\to u$.
If $D(y)<r$, then $D(y_n)<r_n$ for all sufficiently large $n$, so the nearby steps are terminal and their selected outputs are $\Proj_{\X}(y_n)\to\Proj_{\X}(y)$. Thus the selected step is continuous in its center and radius. Induction now shows that $x^k(\rho)$ is continuous in $\rho$ for every fixed $k$.
On any compact interval $[c,d]\subset(0,1)$, \eqref{eq:geometric-uniform-tail} gives

\[
\norm{z(\rho)-x^k(\rho)}\overset{\eqref{eq:geometric-uniform-tail}}{\le}\frac{td^k}{1-d},
\]

uniformly in $\rho\in[c,d]$. Hence $z$ is continuous as a locally uniform limit of the continuous maps $x^k$. Since $\X$ is closed, the successful set $\{\rho\in(0,1):z(\rho)\in\X\}$ is relatively closed in $(0,1)$.

\emph{The set of factors yielding finite termination is open.}
Suppose $x^n(\rho)\in\X$. Since $D(x^n(\rho))=0<t\rho^n$, continuity of $x^n(\rho)$, of the distance to $\X$, and of $t\rho^n$ implies that, for every sufficiently nearby $\rho'$, $D(x^n(\rho'))<t(\rho')^n$. The next step is therefore terminal. Hence the set of factors that attain a minimizer finitely is open.

\emph{Bounds on the minimum successful factor.}
If $\rho$ is successful, then \eqref{eq:geometric-uniform-tail} at $k=1$ gives

\[
d_1\overset{\eqref{eq:geometric-uniform-tail}}{\le}\frac{t\rho}{1-\rho},
\]

and therefore $\rho\ge a$.
For $\rho\ge b$,
\[
\sum\limits_{k=1}^\infty t^2\rho^{2k}=\frac{t^2\rho^2}{1-\rho^2}\ge d_1^2.
\]
Suppose, to the contrary, that every step after $x^1$ is nonterminal. By \eqref{eq:refined-c}, the step from $x^1$ satisfies

\begin{equation}
\label{eq:critical-first-decrease}
D_2^2\overset{\eqref{eq:refined-c}}{\le} d_1^2-t^2\rho^2-\frac{2\Delta_2}{c_1},
\qquad \Delta_2>0.
\end{equation}

Each later nonterminal step decreases squared distance by at least its squared radius. Hence for every $N\ge2$,

\[
D_N^2\overset{\eqref{eq:critical-first-decrease},\eqref{eq:lower-fejer}}{\le} d_1^2-\sum\limits_{k=1}^{N-1}t^2\rho^{2k}-\frac{2\Delta_2}{c_1}.
\]

Since the infinite sum is at least $d_1^2$, the right-hand side is negative for all sufficiently large $N$, a contradiction. Thus every $\rho\in[b,1)$ attains a minimizer finitely.
The successful set is therefore nonempty, and its intersection with $[a,b]$ is compact. Let $\rho_\star$ be its minimum. Since the set of factors yielding finite termination is open and the iteration terminates finitely at $b$, $\rho_\star<b$. The same openness shows that $\rho_\star$ itself cannot attain finitely, since otherwise some smaller nearby factor would also be successful. Thus every step at $\rho_\star$ is nonterminal, while $z(\rho_\star)\in\X$. Every $\rho<\rho_\star$ is unsuccessful and therefore has a nonoptimal limit.

\emph{Rate at the minimum successful factor.}
Let $\bar x=z(\rho_\star)$. Since every step at $\rho_\star$ is nonterminal, each displacement has length $t\rho_\star^j$, so the remaining trajectory length from $x^k(\rho_\star)$ is exactly $\mathcal T_k(\rho_\star)$. By \Cref{cor:tail-length},

\[
\mathcal T_k(\rho_\star)\overset{\eqref{eq:tail-length-distance}}{\le}2C_dD_k(\rho_\star).
\]

Since $\bar x\in\X$, \eqref{eq:geometric-uniform-tail} also gives

\[
D_k(\rho_\star)\le\norm{x^k(\rho_\star)-\bar x}\overset{\eqref{eq:geometric-uniform-tail}}{\le}\mathcal T_k(\rho_\star).
\]

Together these inequalities give \eqref{eq:critical-geometric-rate}, and taking $k$th roots yields the two root-rate limits. The objective-gap estimate follows from \eqref{eq:radius-tail-error} with the optimal limit $\bar x$.

\emph{Finite-prefix characterization.}
If $\rho$ is successful, then $z(\rho)\in\X$, so \eqref{eq:geometric-uniform-tail} gives $D_k(\rho)\le\mathcal T_k(\rho)$ for every $k\ge1$. Conversely, if these inequalities hold for every $k$, then $\mathcal T_k(\rho)\to0$ implies $D_k(\rho)\to0$. Since $x^k(\rho)\to z(\rho)$ and $\X$ is closed, $z(\rho)\in\X$, so $\rho$ is successful.
For each $K$, continuity of the finite iterates makes $A_K$ compact, and $b\in A_K$, so $A_K$ is nonempty. Moreover, $A_{K+1}\subseteq A_K$. Hence $m_K:=\min A_K$ is nondecreasing and converges to some $m_\infty\le\rho_\star$. For every fixed $j$, $m_K\in A_j$ whenever $K\ge j$, so closedness of $A_j$ gives $m_\infty\in A_j$. Thus $m_\infty$ belongs to every $A_j$, and the preceding characterization shows that $m_\infty$ is successful. By minimality of $\rho_\star$, $m_\infty\ge\rho_\star$. Therefore $m_\infty=\rho_\star$, proving \eqref{eq:geometric-prefix-minimum}.
\end{proof}

\section{Nonmonotonicity of the fixed-radius termination count}
\label{app:radius-monotonicity}
The upper bounds on $N_t$ improve as the fixed radius $t$ increases, but this does not imply that the actual termination count is nonincreasing in $t$. 
Changing the radius also changes the trajectory. 
The following counterexample shows that a larger radius can produce a lower objective value after the first step while placing the next iterate farther from the solution set, resulting in an additional step.
\begin{proposition}[The termination count need not decrease with the radius]\label{prop:radius-nonmonotone}
There exist a finite, continuous, coercive convex function on $\R^2$ with a unique minimizer, a starting point $x^0$, and radii $0<s<t$ such that
\[
N_s=2,\qquad N_t=3.
\]
Moreover, the first radius-$t$ step attains a lower objective value than the first radius-$s$ step.
\end{proposition}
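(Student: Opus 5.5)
The plan is to exhibit an explicit two-dimensional polyhedral example and certify every step with the optimality system of \Cref{prop:four-views}. Then no ball problem has to be solved by hand; each step only needs to be checked. I would take a weighted $\ell_1$-type objective
\[
f(z)=|z_1|+M|z_2|,\qquad M>1,
\]
which is finite, continuous, coercive and convex, and has the unique minimizer $0$. I would start at $x^0=(a,b)$ with $a\gg b>0$. The mechanism I am aiming for is the following.

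\emph{Small radius $s$.} While both coordinates are positive, the subgradient is $(1,M)$. A radius-$s$ step that is not large enough to zero out the second coordinate therefore moves along $-(1,M)/\sqrt{1+M^2}$. Because this direction is almost vertical for large $M$, it lands at a point $u_s$ with a small second coordinate and almost the same first coordinate. The parameters are chosen so that the second step already contains $0$.

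\emph{Large radius $t$.} The step zeroes the steep coordinate $z_2$ and spends the remaining radius sliding along the $z_1$-axis. The value then drops further, but the output $u_t$ is the corner-type point at which the radial subgradient $(1,\mu)$, with $\mu\in[-M,M]$, is parallel to $x^0-u_t$. Its distance to $0$ can exceed $t$, and the aim is to arrange $t<\norm{u_t}\le 2t$, which forces exactly two further steps. If the geometry turns out to be wrong for this $f$, the fallback is to rotate the level sets, replacing $M|z_2|$ by $M|z_2-\kappa z_1|$. This puts the kink line away from the direct path to $0$, so that the larger ball is pulled onto the kink at a point farther from $0$ than the point reached after the $s$-step.

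\emph{Verification.} The steps are checked in this order, using \Cref{lem:certificate}.
\begin{enumerate}[label=(\arabic*)]
\item For each nonterminal step $x\mapsto u$ at radius $r$, exhibit $g\in\partial f(u)$ with $g=c(x-u)$, $c>0$, and $\norm{x-u}=r$, and confirm $\dist(x,\X)>r$. By the sufficiency direction of \eqref{eq:view-system}, this proves that $u$ is the unique exact output.
\item For the terminal steps, check only $\norm{x}\le r$, that is, $D(x)\le r$.
\item Check that the first radius-$s$ output is nonterminal and that the second step is terminal. This gives $N_s=2$.
\item For radius $t$, check that the first two steps are nonterminal and the third is terminal. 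This gives $N_t=3$.
\item Check directly that $f(u_t)<f(u_s)$, which is automatic because the $s$-ball is contained in the $t$-ball and the outputs are unique.
\end{enumerate}

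\emph{Main obstacle.} The hard part is choosing concrete numbers $M,a,b,s,t$, rational or quadratic surds, so that all strict inequalities hold at once. In particular, the radius-$s$ trajectory must reach within $s$ of $0$ after one step, while the radius-$t$ trajectory must end its first step more than $t$ away from $0$. I would first parametrize the first-step outputs in closed form as functions of the radius, separating the case where $z_2$ survives from the case where it is zeroed. I would then solve $\norm{u_s}\le s$ and $\norm{u_t}>t$ as explicit inequalities in $M$ and the radii, and only afterwards fix numbers and run the certificate checks.
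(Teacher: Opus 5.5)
\textbf{Gap: no concrete instance, and both proposed families provably fail.} Your verification scheme is sound and matches how the paper certifies its steps: a radial subgradient with full-radius displacement, via \Cref{lem:certificate}. Your step (5) is also a valid shortcut. Since $u_s$ lies in the interior of $\B(x^0,t)$, while the unique nonterminal radius-$t$ output lies on its boundary, $f(u_t)<f(u_s)$. The problem is that the proposition is an existence statement, and you never exhibit an instance. Moreover, neither family you propose can supply one.

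Both $|z_1|+M|z_2|$ and $|z_1|+M|z_2-\kappa z_1|$ are norms with parallelogram unit balls, so all sublevel sets are homothetic polygons about the minimizer $0$. For $0<r<\norm{x^0}$, let $u_r$ be the first output. Since $\norm{x^0-u_r}=r$,
\[
\norm{u_r}^2-r^2=2\ip{x^0}{u_r}-\norm{x^0}^2 .
\]
While $u_r$ stays on one edge, $u_r=x^0-rn$ with a fixed outward unit normal $n$. Because $0$ lies in the sublevel set, $\ip{x^0}{n}=\ip{u_r}{n}+r\ge r>0$, so the right-hand side strictly decreases in $r$. While $u_r$ sits at a vertex, $u_r=\alpha_rA$ with $\alpha_r$ strictly decreasing, so $\norm{u_r}$ decreases as $r$ grows. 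By continuity of $r\mapsto u_r$, the sign of $\norm{u_r}-r$ switches at most once, from positive to nonpositive. Hence if the second radius-$s$ ball contains $0$, so does the second radius-$t$ ball for every $t>s$, and $N_s=2$ forces $N_t\le2$.

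Your described $s$-step mechanism also defeats itself. If $s\le b\sqrt{1+M^2}/M$, so that the second coordinate survives, then $\norm{u_s}\ge a-b/M$ while $s<\sqrt2\,b$. The second radius-$s$ ball therefore misses $0$ unless $a<(1+\sqrt2)b$, contrary to $a\gg b$.

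\textbf{What the example must do.} The identity above shows the requirement: the path $r\mapsto u_r$ must bend away from the minimizer, so that $\ip{x^0}{u_r}$ increases with $r$. This needs sublevel sets that are not homothetic, for instance affine pieces with offsets, plus a weak coercive term that pins the minimizer. The paper takes
\[
f(x,y)=\max\{|x|/100,\ |y|/100,\ x-\tfrac25y-\tfrac{37}{100},\ \tfrac{49}{100}x-\tfrac{6}{25}y-\tfrac{1473}{10000}\},
\]
with $x^0=(1,0)$, $s=\sqrt{29}/10$, and $t=\sqrt{2977}/100$. The small ball lands on the third piece at $u=(1/2,1/5)$, where $\ip{x^0}{u}=1/2$, so $\norm{u}=s$. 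The large ball is pulled onto the fourth piece at $v=(51/100,6/25)$, where $\ip{x^0}{v}=51/100$, so $\norm{v}^2-t^2=1/50>0$.

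\textbf{Termination criterion.} Your claim that $t<\norm{u_t}\le 2t$ forces exactly two further steps is unjustified. For the second output $w$, \Cref{cor:squared} only gives $\norm{w}^2\le\norm{u_t}^2-t^2$. You need $\norm{u_t}\le\sqrt2\,t$ to guarantee a terminal third step; the paper's example satisfies this because $1/50<t^2$.
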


\begin{proof}
Take $x^0=(1,0)$ and
\begin{equation}\label{eq:radius-counterexample}
f(x,y)=\max\left\{
\frac{|x|}{100},\frac{|y|}{100},
x-\frac25y-\frac{37}{100},
\frac{49}{100}x-\frac6{25}y-\frac{1473}{10000}
\right\}.
\end{equation}
Since $f(x,y)\ge\max\{|x|,|y|\}/100$ and $f(0,0)=0$, the function is finite, continuous, coercive, and convex, with unique minimizer $(0,0)$.
Set
\[
s=\frac{\sqrt{29}}{10},\qquad t=\frac{\sqrt{2977}}{100},
\]
so that $0<s<t$.

For radius $s$, direct calculation gives the first output
$u=(1/2,1/5)$. 
Indeed, the third affine term in \eqref{eq:radius-counterexample} is uniquely active at $u$, with $f(u)=1/20$ and $\nabla f(u)=(1,-2/5)=2(x^0-u)$, while $\norm{x^0-u}=\norm u=s$. 
Thus \Cref{lem:certificate} verifies the first step, and the second radius-$s$ ball contains the minimizer. Hence $N_s=2$.

For radius $t$, the first output is similarly $v=(51/100,6/25)$. 
The fourth affine term is uniquely active at $v$, with $f(v)=9/200$ and $\nabla f(v)=(49/100,-6/25)=x^0-v$, and $\norm{x^0-v}=t$. 
Hence \Cref{lem:certificate} verifies the first radius-$t$ step. However, $\norm v^2=3177/10000>t^2$, so the second step is nonterminal. 
If $w$ is its output, \Cref{cor:squared} gives $\norm w^2\le\norm v^2-t^2=1/50<t^2$. Therefore the third ball contains the minimizer, and $N_t=3$.

Finally, $f(v)=9/200<1/20=f(u)$. 
Thus the larger radius gives a better first objective value but requires an additional step to attain the minimizer.
\end{proof}

For completeness, the second output of the radius-$t$ trajectory shown in \Cref{fig:radius-count} is
$w=\alpha(1,1)$ with $\alpha=(15-\sqrt{209})/40$.
\begin{figure}[tbp]
\centering
\includegraphics[width=\textwidth]{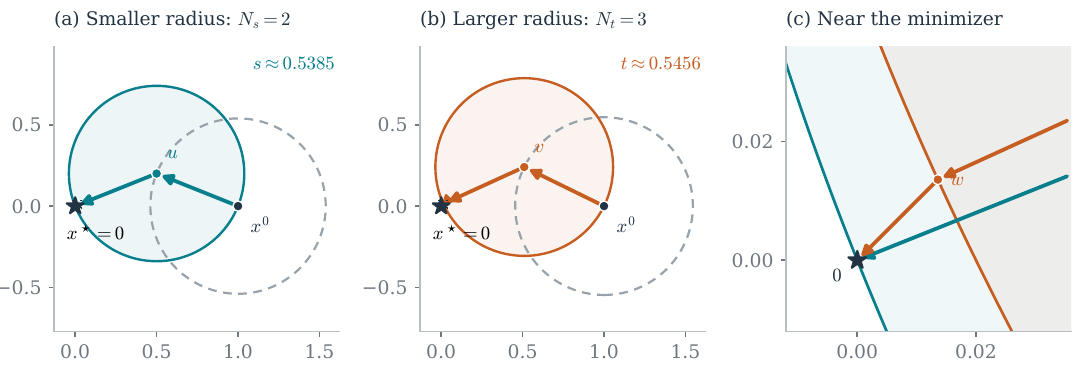}
\caption{Two fixed-radius \BPM{} trajectories from the same starting point. For radius $s$, the path is $x^0\to u\to0$; for radius $t$, it is $x^0\to v\to w\to0$. Dashed circles show the first balls and colored circles the second balls. Panels (a)--(b) use identical scales, while panel (c) magnifies the neighborhood of the minimizer: zero lies on the smaller second ball but outside the larger second ball. The third ball centered at $w$ is omitted.}
\label{fig:radius-count}
\end{figure}

\begin{table}[tbp]
\centering
\caption{Comparison of the first BPM steps in \Cref{fig:radius-count}. The larger radius gives a lower objective value but leaves its first output farther than one radius from the minimizer.}
\label{tab:radius-count-comparison}
\small
\renewcommand{\arraystretch}{1.3}
\begin{tabular}{@{}lll@{}}
\toprule
\textbf{Quantity} & \textbf{Smaller radius $s$} & \textbf{Larger radius $t$}\\
\midrule
Radius & $\sqrt{29}/10\approx0.5385$ & $\sqrt{2977}/100\approx0.5456$\\
First output & $u=(1/2,1/5)$ & $v=(51/100,6/25)$\\
Objective value & $f(u)=1/20=0.050$ & $f(v)=9/200=0.045$\\
Distance to the minimizer & $\norm u=s$ & $\norm v=\sqrt{3177}/100\approx0.5636>t$\\
Second ball contains the minimizer & Yes & No\\
Iterations to termination & $N_s=2$ & $N_t=3$\\
\bottomrule
\end{tabular}
\end{table}

\clearpage
\section{Key concepts}
\label{app:concepts}
Table~\ref{tab:concepts} collects the named concepts used in the paper, grouped by their role. The last column links to the first mention (including the abstract and overview table) and, where different, to the definition or precise formulation. The notation index is in \Cref{app:notation}.

\begingroup
\small
\setlength{\tabcolsep}{5pt}
\renewcommand{\arraystretch}{1.24}
\newcommand{\conceptname}[1]{\textbf{\color{TableInk}#1}}
\newcommand{\conceptfirst}[2]{\hyperref[#1]{#2}, p.~\pageref{#1}}
\newcommand{\conceptabstract}{\conceptfirst{concept:abstract}{Abstract}}
\begin{longtable}{@{}>{\raggedright\arraybackslash}p{.23\textwidth}>{\raggedright\arraybackslash}p{.49\textwidth}>{\raggedright\arraybackslash}p{.23\textwidth}@{}}
\caption{Key concepts: definitions and a guide to their introduction.}\label{tab:concepts}\\
\toprule
\textbf{Concept} & \textbf{Definition or meaning in this paper} & \textbf{First appearance; formulation}\\
\midrule
\endfirsthead
\multicolumn{3}{@{}l}{\textbf{Table \thetable. Key concepts (continued)}}\\[5pt]
\toprule
\textbf{Concept} & \textbf{Definition or meaning in this paper} & \textbf{First appearance; formulation}\\
\midrule
\endhead
\midrule\multicolumn{3}{r@{}}{\footnotesize Continued on the next page}\\
\endfoot
\bottomrule
\endlastfoot
\multicolumn{3}{@{}l}{\color{TableInk}\bfseries 1. Exact steps and equivalent formulations}\\*[5pt]
\conceptname{Ball-proximal point method (BPM)} & Each step minimizes $f$ over the closed Euclidean ball centered at the current iterate: $x^{k+1}\in\brox_f^{t_k}(x^k)$. The brox operator is the set of these minimizers. & \conceptabstract; update \textup{(\ref*{eq:bpm-intro})}.\\[4pt]
\conceptname{Exact ball oracle} & Given $x\in\dom f$ and $t>0$, returns a minimizer of $f$ on $\B(x,t)$. Each BPM iteration uses one oracle call. & \conceptfirst{sec:intro}{\S\ref*{sec:intro}}; definition \hyperref[concept:oracle]{after the update}.\\[4pt]
\conceptname{Terminal and nonterminal steps} & A step is terminal if its output belongs to $\X$, equivalently if its ball meets $\X$; otherwise it is nonterminal. A nonterminal output is unique and lies on the ball boundary. & \conceptfirst{concept:terminal}{\S\ref*{sec:intro}}; Lem.~\ref{lem:certificate}.\\[4pt]
\conceptname{Radial subgradient} & A subgradient $g\in\partial f(u)$ of the form $g=c(x-u)$, $c>0$, for a nonterminal step from $x$ to $u$. Its norm need not be the minimum subgradient norm. & \conceptfirst{prop:four-views}{Prop.~\ref*{prop:four-views}(ii)}; \eqref{eq:radial}.\\[4pt]
\conceptname{Proximal map} & $\prox_{\lambda f}(x)$ is the unique minimizer of $f(v)+\norm{v-x}^2/(2\lambda)$, $\lambda>0$. A nonterminal BPM step is proximal with $\lambda=t/\norm g$. & \conceptfirst{prop:four-views}{Prop.~\ref*{prop:four-views}(ii)}; \eqref{eq:view-implicit}.\\[4pt]
\conceptname{Ball envelope} & The value function $F_t(x)=\inf_{\norm{u-x}\le t}f(u)$. It is convex but need not be differentiable. A BPM step is a normalized explicit subgradient step on this envelope. & \conceptfirst{sec:four-views}{\S\ref*{sec:four-views}}; \eqref{eq:envelope-definition}.\\[4pt]
\conceptname{Sublevel-set projection} & A nonterminal output satisfies $u=\Proj_{\{v:f(v)\le f(u)\}}(x)$ and $\norm{x-u}=t$. The level is determined by the ball problem. & \conceptfirst{sec:four-views}{\S\ref*{sec:four-views}}; Prop.~\ref{prop:four-views}(iv).\\[4pt]
\conceptname{Fenchel-dual formulation} & The ball value equals $\max_g\{\ip xg-f^*(g)-t\norm g\}$, where $f^*(g)=\sup_v\{\ip gv-f(v)\}$ is the convex conjugate. & \conceptfirst{sec:four-views}{\S\ref*{sec:four-views}}; Prop.~\ref{prop:four-views}(v).\\[4pt]
\conceptname{Lagrangian dual formulation} & Maximize $q_t(\gamma)=\inf_v\{f(v)+\frac{\gamma}{2}(\norm{v-x}^2-t^2)\}$ over $\gamma\ge0$. A positive optimal multiplier yields the nonterminal proximal output. & \conceptfirst{sec:four-views}{\S\ref*{sec:four-views}} (scalar dual); Prop.~\ref{prop:four-views}(vi).\\
\newpage
\multicolumn{3}{@{}l}{\color{TableInk}\bfseries 2. Error measures and convergence bounds}\\*[5pt]
\conceptname{Objective gap} & $\Delta_k=f(x^k)-f_\star$ measures function-value error when the minimum is attained. & \conceptabstract; \hyperref[concept:errors]{notation in \S\ref*{sec:intro}}.\\[5pt]
\conceptname{Distance to the solution set} & $D_k=\dist(x^k,\X)$ is the distance to a nearest minimizer, rather than necessarily to a fixed reference minimizer. & \conceptabstract; \hyperref[concept:errors]{notation in \S\ref*{sec:intro}}.\\[5pt]
\conceptname{Minimum subgradient norm; stationarity} & $m(x)=\dist(0,\partial f(x))$, with $m(x)=+\infty$ if $\partial f(x)=\varnothing$. For convex $f$, $m(x)=0$ characterizes a minimizer. & \conceptabstract; Cor.~\ref{prop:minimum-subgradient}.\\[5pt]
\conceptname{Generalized Bregman distance} & $D_f^g(z,u)=f(z)-f(u)-\ip g{z-u}$ for a specified $g\in\partial f(u)$. It is nonnegative, depends on the chosen subgradient, and is not generally a metric. & \conceptfirst{prop:four-views}{Prop.~\ref*{prop:four-views}(v)} (for $f^*$); \eqref{eq:bregman-definition}.\\[5pt]
\conceptname{Symmetric Bregman distance to a minimizer} & $\mathcal G_{x^\star}(u,g)=D_f^g(x^\star,u)+D_f^0(u,x^\star)=\ip g{u-x^\star}$, using $g\in\partial f(u)$ and $0\in\partial f(x^\star)$. & \conceptabstract; Prop.~\ref{prop:primal-dual-gap}.\\[5pt]
\conceptname{Fenchel--Young gap} & $f(u)+f^*(g)-\ip ug$. It is zero whenever $g\in\partial f(u)$, so for radial subgradients it does not measure progress toward a minimizer. & \conceptfirst{concept:fenchel-young}{\S\ref*{sec:primal-dual}} (interpretation).\\[5pt]
\conceptname{Finite convergence; finite termination} & Some finite iterate lies in $\X$. Under the stopped-sequence convention, the method then terminates. This property does not supply an oracle test for detecting optimality. & \conceptabstract; \hyperref[concept:finite]{definition in \S\ref*{sec:intro}}.\\[5pt]
\conceptname{Segment contraction} & Comparing with a feasible point on the segment to a nearest minimizer gives $\Delta_{k+1}\le(1-t_k/D_k)_+\Delta_k$ when $D_k>0$. & \conceptfirst{sec:related-work}{\S\ref*{sec:related-work}}; Thm.~\ref{thm:segment}.\\[5pt]
\conceptname{Jensen objective-gap bound} & For $K$ nonterminal constant-radius steps and $S=D_0^2/t^2$, $\Delta_K/\Delta_0\le((S-K)/(S+K))^K$. Jensen's inequality combines the decreases in squared distance across steps. & \conceptfirst{concept:jensen}{\S3.2} (derivation); Thm.~\ref{thm:jensen}.\\
\newpage
\multicolumn{3}{@{}l}{\color{TableInk}\bfseries 3. Trajectory geometry, radius rules, and reformulations}\\*[5pt]
\conceptname{Self-contracted sequence} & $\norm{z^j-z^m}\le\norm{z^i-z^m}$ for $i\le j\le m$: the distance to any later point cannot increase along earlier iterates. & \conceptabstract; Def.~\ref{def:self-contracted}.\\[4pt]
\conceptname{Finite trajectory length} & The total displacement $\sum_{k=0}^\infty\norm{x^{k+1}-x^k}$ is finite. Bounded self-contracted sequences have this property in finite dimension. & \conceptabstract; Lem.~\ref{lem:length}.\\[4pt]
\conceptname{Summable and nonsummable radii} & Positive radii are summable if $\sum_{k=0}^\infty t_k<\infty$, and nonsummable otherwise. The latter condition guarantees finite termination whenever a minimizer exists. & \conceptabstract; Thm.~\ref{thm:general-convergence}.\\[4pt]
\conceptname{Nearest-minimizer selection at termination} & Choose $\Proj_{\X}(x)$ when the ball meets $\X$; otherwise use the unique nonterminal output. This defines the single-valued selection $T_t$. & \conceptfirst{tab:overview}{Table~\ref*{tab:overview}}; Def.~\ref{def:projected}.\\[4pt]
\conceptname{Epigraph reformulation} & Minimize $F(x,s)=s+\delta_{\operatorname{epi}f}(x,s)$, where $\operatorname{epi}f=\{(x,s):f(x)\le s\}$. Balls in the lifted space induce different radii in the original space. & \conceptabstract; \eqref{eq:epigraph-objective}.\\[4pt]
\conceptname{Subgradient-based radius rule} & Set $t_k=\tau\norm{h^k}$ with $h^k\in\partial f(x^k)$ and $\tau>0$; stop if $h^k=0$. An available radial subgradient from the preceding step may be reused. & \conceptabstract; Thm.~\ref{thm:certificate-calibration}.\\[4pt]
\conceptname{Objective-gap-based radius rule} & Set $t_k=\tau\Delta_k^\alpha$ with $\tau,\alpha>0$ and known $f_\star$. The paper also analyzes replacement of $f_\star$ by a valid lower bound. & \conceptabstract; Thm.~\ref{thm:gap-power}.\\[4pt]
\conceptname{Minimum successful geometric factor} & For $t_k=t\rho^k$, a factor is successful if the trajectory converges to a minimizer. When $0<t<D_0$, the successful factors have a minimum; they are not asserted to form an interval. & \conceptabstract; Thm.~\ref{thm:geometric-factor}.\\[4pt]
\conceptname{Relaxed BPM; cutter property} & The update is $R_{t,\lambda}(x)=x+\lambda(T_t(x)-x)$, $0<\lambda<2$. The cutter property is $\ip{x-T_t(x)}{z-T_t(x)}\le0$ for $z\in\X$. & Relaxation: \conceptabstract. Cutter: \S\ref{sec:related-work}; \S\ref{sec:relaxation}.\\[4pt]
\conceptname{Fej\'er monotonicity} & $\norm{x^{k+1}-z}\le\norm{x^k-z}$ for every $z\in\X$. Unlike self-contraction, the reference points belong to a fixed set. & \conceptfirst{sec:related-work}{\S\ref*{sec:related-work}} (Fej\'er-type decrease); \eqref{eq:general-relaxed-fejer}.\\
\end{longtable}
\endgroup

\end{document}